\theoremstyle{plain} 
\newtheorem{thm}{Theorem}[section]
\newtheorem*{mthm}{Main Theorem}
\newtheorem{prop}[thm]{Proposition}
\newtheorem{lemma}[thm]{Lemma}
\newtheorem{cor}[thm]{Corollary} 
\newtheorem{conj}{Conjecture}
\theoremstyle{remark}
\newtheorem{remark}[thm]{Remark}
\theoremstyle{definition}
\newtheorem{defin}[thm]{Definition}
\newcommand{\eps}{\varepsilon}
\newcommand{\CC}{\mathbb{C}}
\newcommand{\PP}{\mathbb{P}}
\newcommand{\QQ}{\mathbb{Q}}
\newcommand{\ZZ}{\mathbb{Z}}
\newcommand{\Qbar}{\overline{\QQ}}
\newcommand{\Kbar}{\overline{K}}
\newcommand{\kbar}{\overline{k}}
\DeclareMathOperator{\sgn}{sgn}
\DeclareMathOperator{\Par}{Par}
\DeclareMathOperator{\Orb}{Orb}
\DeclareMathOperator{\Gal}{Gal}
\DeclareMathOperator{\Aut}{Aut}
\newcommand{\mapa}{\bm{\alpha}}
\newcommand{\mapb}{\bm{\beta}}
\newcommand{\mape}{\bm{\varepsilon}}
\newcommand{\mapt}{\bm{\theta}}
\newcommand{\la}{\langle}
\newcommand{\ra}{\rangle}
\newcommand{\dsps}{\displaystyle}
\begin{document}

\title{The arithmetic basilica: a quadratic PCF arboreal Galois group}
\date{January 7, 2020; revised September 14, 2021}
\subjclass[2010]{37P05, 11R32, 14G25}
%\address[Ahmad,Benedetto,Cain,Carroll,Fang]{Amherst College \\ Amherst, MA 01002 \\ USA}
\address{Amherst College \\ Amherst, MA 01002 \\ USA}
\author[Ahmad]{Faseeh Ahmad}
\email{faseehirfan@gmail.com}
%\email{firfanahmad18@amherst.edu}
\author[Benedetto]{Robert L. Benedetto}
\email{rlbenedetto@amherst.edu}
\author[Cain]{Jennifer Cain}
\email{jentcain@gmail.com}
%\email{jcain18@amherst.edu}
\author[Carroll]{Gregory Carroll}
\email{gcarroll19@amherst.edu}
\author[Fang]{Lily Fang}
\email{lilycfang@yahoo.com}
%\email{lfang18@amherst.edu}

\begin{abstract}
The arboreal Galois group of a polynomial $f$ over a field $K$ encodes
the action of Galois on the iterated preimages of a root point $x_0\in K$,
analogous to the action of Galois on the $\ell$-power torsion of an abelian variety.
We compute the arboreal Galois group of the postcritically finite polynomial $f(z) = z^2 - 1$
when the field $K$ and root point $x_0$ satisfy a simple condition.
We call the resulting group the \emph{arithmetic basilica group}
because of its relation to the basilica group associated with
the complex dynamics of $f$.
For $K=\QQ$, our condition holds for infinitely many choices of $x_0$.
%it is strictly larger than the basilica group, which is the arboreal Galois group for $z^2-1$
%when $K=\CC(t)$, and it acts nontrivially on $2$-power roots of unity.
\end{abstract}

\maketitle

Let $K$ be a field with algebraic closure $\Kbar$, let $x_0\in K$,
and let $f\in K[z]$ be a polynomial of degree $d\geq 2$.
For each $n\geq 0$, let $f^n$ denote
the $n$-th iterate $f\circ \cdots \circ f$ of $f$ under composition,
with $f^0(z)=z$ and $f^1(z)=f(z)$. The \emph{backward orbit} of $x_0$ under $f$ is
\[ \Orb_f^{-}(x_0) := \coprod_{n\geq 0} f^{-n}(x_0) \subseteq \Kbar, \]
where $f^{-n}(y)$ is the set of roots of the equation $f^n(z)=y$ in $\Kbar$.

If $f^n(z)-x_0$ is a separable polynomial, then $f^{-n}(x_0)$ has exactly $d^n$ elements,
and the field $K_n:=K(f^{-n}(x_0))\subseteq \Kbar$ is a Galois extension of $K$, with Galois group
\[ G_n:=\Gal(K_n/K). \]
If $f^n(z)-x_0$ is separable for all $n\geq 0$, then we also define
\[ G_{\infty}:=\Gal(K_{\infty}/K), \quad\text{where}\quad  K_{\infty}:=\bigcup_{n\geq 0} K_n .\]
The backward orbit $\Orb_f^-(x_0)$ has the structure of
an infinite $d$-ary rooted tree $T_{d,\infty}$, formed
by connecting each $y\in f^{-(n+1)}(x_0)$ to $f(y)\in f^{-n}(x_0)$ via an edge.
%Because $G_{\infty}$ acts on this tree, and $G_n$ acts
%on the $d$-ary rooted tree $T_{d,n}$ corresponding to the bottom $n$ levels of $T_{d,\infty}$,
%these groups are called \emph{arboreal Galois groups}.
Thus, $G_\infty$ is isomorphic to a subgroup of $\Aut(T_{d,\infty})$,
and $G_n$ is isomorphic to a subgroup of the automorphism group $\Aut(T_{d,n})$,
where $T_{d,n}$ is the subtree of just the bottom $n$ levels of $T_{d,\infty}$.
The resulting action of Galois on the tree is analogous
to the action of Galois on the $\ell$-power torsion of an abelian variety $A$,
since the $\ell$-power torsion is precisely
the backward orbit of the identity point $O$ under the morphism $[\ell]:A\to A$.

Odoni introduced the study of such Galois groups in 1985 in \cite{Odoni},
and Boston and Jones in 2007 called them ``arboreal'' in \cite{BosJon},
since they act on trees.
These groups have attracted increasing attention over the years;
see \cite{AHM,ABetal,BHL,FM,FPC,GoTa,Hindes,Hindes2,Ing,JonMan,JKMT,Stoll,Swam}
for a (very limited) selection.
See also \cite{Jones} for a survey of the field.
Many examples have been found where $G_{\infty}$ is the full group $\Aut(T_{d,\infty})$,
as in \cite{BenJuu,Juul,Kadets,Looper,Odoni,Specter,Stoll}.
More generally, the expectation has emerged that
when $K$ is a global field, $G_{\infty}$ should usually have finite index in $\Aut(T_{d,\infty})$;
see \cite[Conjecture~3.11]{Jones} for a precise conjecture when $d=2$,
and \cite{BriTuc,GNT,JKetal} for conditional results for $d=2,3$.
By analogy, Serre's Open Image Theorem \cite{Serre} states that
for a non-CM elliptic curve over a number field,
the action of Galois on the $\ell$-power torsion
has finite index in the appropriate automorphism group $GL(2,\ZZ_{\ell})$.

However, just as Serre's Theorem excludes the special case of CM elliptic curves,
there are special situations where $G_{\infty}$ necessarily has infinite index
in $\Aut(T_{d,\infty})$. One such case is that the map $f$ is
\emph{postcritically finite}, or PCF, meaning that for every ramification point $c$
of $f$, the forward orbit $\{f^n(c) | n\geq 0\}$ is finite; equivalently, every
critical point of $f$ is preperiodic. (See, for example, \cite[Theorem~3.1]{Jones}.)
%for Jones' and Pink's proof that $[\Aut(T_{d,\infty}):G_{\infty}]=\infty$ for PCF maps.

For any given PCF map, one may ask whether there is 
an associated subgroup of $\Aut(T_{d,\infty})$
that always contains, and in some cases equals, the arboreal Galois group $G_{\infty}$.
In \cite{BFHJY}, this question was answered in the affirmative for the PCF cubic polynomial
$-2z^3+3z^2$, including an explicit computation of the subgroup
$E_{\infty}\subsetneq \Aut(T_{3,\infty})$
and a simple sufficient condition on $K$ and $x_0$ for $G_{\infty}$ to be all of $E_{\infty}$.
In the present paper, we do the same for the PCF quadratic polynomial $f(z):=z^2-1$.

Let $T_{\infty}$ and $T_{n}$ denote
the binary rooted trees $T_{2,\infty}$ and $T_{2,n}$, respectively.
The two critical points $0,\infty$ of $f$ are both periodic, with $\infty\mapsto\infty$
and $0\mapsto -1 \mapsto 0$. Over the function field
$K=\CC(t)$ with $x_0=t$,
a setting in which arboreal Galois groups are often known as
 \emph{profinite iterated monodromy groups},
$G_{\infty}$ is isomorphic to the closure $\overline{B}_{\infty}$ of a well-understood subgroup
$B_{\infty}$ of $\Aut(T_\infty)$ called the \emph{basilica group}.
(See \cite[Section~6.12.1]{Nek}, as well as
\cite[Section~5]{BGN}, especially Theorem~5.8 and following.)
Here and throughout this paper, when we say that two groups that act on a tree
are isomorphic, we mean not only that they are isomorphic as abstract groups,
but also that the isomorphism is equivariant with respect to the action on the tree.

In \cite[Theorem~2.5.6]{Pink}, Pink showed for \emph{any} algebraically
closed field $\kbar$ not of characteristic~2, then with $K=\kbar(t)$ and $x_0=t$,
the arboreal Galois group $G_{\infty}$ is isomorphic to $\overline{B}_{\infty}$.
Pink also showed that for function fields $K=k(t)$ where $k$ is \emph{not}
algebraically closed, the arboreal Galois group $G_{\infty}$ is an extension of
$\overline{B}_{\infty}$ by a subgroup of the $2$-adic multiplicative group
$\ZZ_2^{\times}$, via the $2$-adic cyclotomic character $\Gal(\kbar/k)\to\ZZ_2^{\times}$,
which factors through $G_{\infty}$.
(See \cite[Theorem~2.8.4]{Pink}.)
We define and discuss $\overline{B}_{\infty}$ in Section~\ref{sec:Basil}.

However,
our interest in this paper extends to the case that the field $K$ is a number field, 
which is not directly covered by Pink's work \cite{Pink}.
More precisely, by choosing the coefficient field $k$ in \cite{Pink} to be a number field,
and specializing the parameter $t$ in $K=k(t)$ to some $x_0\in k$, the resulting
arboreal Galois group $\Gal(k_{\infty}/k)$ is a decomposition subgroup
of the generic Galois group $\Gal(K_{\infty}/K)$.
Since our aims are to determine when that subgroup is the full group,
and to present a new description of that group,
we pursue a different strategy, as follows.

We give an explicit definition of a subgroup $M_{\infty}\subseteq \Aut(T_{\infty})$
that we call the \emph{arithmetic basilica group}
and which is an extension of $\overline{B}_{\infty}$ by $\ZZ_2^{\times}$.
We do so by defining a quantity $P(\sigma,x)\in\ZZ_2^{\times}$
for each $\sigma\in\Aut(T_{\infty})$ and each node $x$ of the tree $T_{\infty}$;
see Section~\ref{sec:Pdef}.
Then, in Section~\ref{sec:Mn}, we define $M_{\infty}$ to be the set of such $\sigma$ for which
$P(\sigma,x)$ has that same value for each node $x$.
In the remaining sections, we prove that $M_{\infty}$ is the desired group.
%Our construction yields a short exact sequence
%\[ 0 \longrightarrow B_{\infty} \longrightarrow M_{\infty}
%\overset{P}{\longrightarrow} \ZZ_2^{\times} \longrightarrow 0 \]
%where $P$ is the same map $\sigma\mapsto P(\sigma,x)$.
Here is a formal statement summarizing our main results; as noted above,
the group isomorphisms here are assumed to be equivariant with respect to the action
on the associated trees. More precisely, when we say that $G_{\infty}$
is isomorphic to a given subgroup of $\Aut(T_\infty)$, we mean $T_{\infty}$
can be mapped onto the tree of preimages of $x_0$ so as to yield this equivariance.

\begin{mthm}
Let $K$ be a field of characteristic different from $2$,
and let $x_0\in K$ with $x_0\neq 0,-1$.
Let $G_{\infty}$ be the arboreal Galois group for $f(z)=z^2-1$ over $K$, rooted at $x_0$.
Then:
\begin{enumerate}
\item $G_{\infty}$ is isomorphic to a subgroup of
the arithmetic basilica group $M_{\infty}$.
\item The following are equivalent:
\begin{enumerate}
\item $G_{\infty}\cong M_{\infty}$.
\item $G_5 \cong M_5$.
%where $M_n$ denotes the quotient of $M_{\infty}$
%formed by restricting to its action on the subtree $T_n$.
\item $[K(\sqrt{-x_0},\sqrt{1+x_0},\zeta_{8}):K]=16$.
%where $\zeta_8$ denotes a primitive eighth root of unity.
\end{enumerate}
\end{enumerate}
\end{mthm}

Here,
$M_n$ denotes the quotient of $M_{\infty}$
formed by restricting to its action on the subtree $T_n$,
and %where
$\zeta_8$ denotes a primitive eighth root of unity.

The above theorem shows that, like the map $z\mapsto -2z^3+3z^2$ of \cite{BFHJY},
the PCF map $f(z)=z^2-1$ has an
associated subgroup $M_{\infty}\subsetneq \Aut(T_{\infty})$
that always contains and sometimes equals the arboreal Galois group $G_{\infty}$.
Condition (2b) shows that this equality is attained for the entire tree if it is
already attained at the fifth level, and condition~(2c) is very easy to check in practice.

We note that if $[K(\zeta_8):K]=4$, then by Hilbert's irreducibility theorem,
there are many choices of $x_0\in K$ for which $[K(\sqrt{-x_0},\sqrt{1+x_0},\zeta_{8}):K]=16$,
since the set of $x_0\in K$ failing this condition is a thin set, in the sense of Serre.
For example, if $K=\QQ$, then the condition holds for
\[ x_0 \text{ or } -1-x_0 \text{ in } \{5,6,10,11,12,13,14,19,20,21,22,23\ldots \} \]
among (infinitely) many other examples.

On the other hand, even when $[K(\sqrt{-x_0},\sqrt{1+x_0},\zeta_{8}):K]<16$,
our computations suggest the following conjecture.
\begin{conj}
\label{conj:basic}
Let $K$ be a number field.
Then for all but finitely many choices of $x_0\in K$,
%Suppose $x_0\in K$ is not periodic under $f(z)=z^2-1$. Then
the associated arboreal Galois group
$G_{\infty}$ for $f(z)=z^2-1$ has finite index in $M_{\infty}$.
\end{conj}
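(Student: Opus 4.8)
The plan is to bootstrap from the Main Theorem together with the structure of $M_{\infty}$ as an extension $1\to\overline{B}_{\infty}\to M_{\infty}\to\ZZ_2^{\times}\to 1$, and to split the index $[M_{\infty}:G_{\infty}]$ into its ``cyclotomic'' and ``geometric'' parts. The image of $G_{\infty}$ in the quotient $\ZZ_2^{\times}$ is, up to a finite-order twist by the Galois action on $\sqrt{-x_0}$ and $\sqrt{1+x_0}$, exactly what the quantity $P(\sigma,x)$ records: the restriction to $\Gal(\Kbar/K)$ of the $2$-adic cyclotomic character. Since a closed subgroup of $\ZZ_2^{\times}$ is either finite or open, and since the discriminant formula for $f^n(z)-x_0$ underlying part~(1) forces enough $2$-power roots of unity into $K_{\infty}$, this image is open --- so that factor of the index is finite --- for all but finitely many $x_0$. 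It therefore remains to prove that the geometric part $G_{\infty}\cap\overline{B}_{\infty}=\ker\!\bigl(G_{\infty}\to\ZZ_2^{\times}\bigr)$ has finite index in $\overline{B}_{\infty}$ for all but finitely many $x_0$.

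Next I would reduce that finite-index statement to a level-by-level Kummer criterion, reusing the machinery behind part~(2) of the Main Theorem. There, the implication $G_5\cong M_5\Rightarrow G_{\infty}\cong M_{\infty}$ is proved by an induction establishing $G_n\cong M_n\Rightarrow G_{n+1}\cong M_{n+1}$, whose inductive step says that $\Gal(K_{n+1}/K_n)$ fills the kernel of $M_{n+1}\to M_n$; and since $K_{n+1}=K_n\bigl(\sqrt{v+1}\,:\,v\in f^{-n}(x_0)\bigr)$, this is the assertion that the Kummer classes $[v+1]\in K_n^{\times}/(K_n^{\times})^2$ span the expected subspace modulo the relations built into $M_{\infty}$ --- among them the global relation $\prod_{v\in f^{-n}(x_0)}(v+1)=f^n(-1)-x_0\in\{-x_0,\,-1-x_0\}$. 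For the present, weaker conclusion one needs only that $G_n\cong M_n$ for all $n$ beyond \emph{some} level, which by the Main Theorem (condition~(2c)) already holds for every $x_0$ outside a thin set; so, after replacing $K$ by a fixed finite extension if need be, the whole problem collapses to one statement: \emph{for each fixed $x_0$, only finitely many ``square coincidences'' occur} --- finitely many relations $\prod_{v\in S}(v+1)\in(K_n^{\times})^2$ with $S$ not predicted by the structure of $M_{\infty}$.

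The arithmetic heart, and the step I expect to be the main obstacle, is precisely this last assertion. Pulling a putative square coincidence down the tree --- each node being $\pm\sqrt{(\text{its parent})+1}$ --- rewrites it as an equation of the shape ``$(\text{a product of values of iterates of }f)=\square$'' over a field of bounded degree, and because the canonical height of a level-$n$ preimage point grows like $2^n$ while the post-critical set of $z^2-1$ is only the two points $\{0,-1\}$ (so the discriminants in play are, up to factors already accounted for, just $\pm x_0$ and $\pm(1+x_0)$), one expects such an equation to force $n$ bounded. Turning that expectation into a theorem, and doing so \emph{uniformly} enough in $x_0$ that the exceptional $x_0$ form a genuinely finite set, is the kind of finiteness statement that for the family $z^2+c$ is currently known only conditionally on the $abc$ or Vojta conjectures (cf.\ \cite{BriTuc,GNT,JKetal}); this is why the assertion is stated as a conjecture, and an unconditional proof would presumably require either one of those inputs or a new non-vanishing estimate that genuinely exploits how small the post-critical orbit of $z^2-1$ is.

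Finally, the same plan yields an unconditional partial result along the way. Combining the Main Theorem with Hilbert irreducibility gives $G_{\infty}\cong M_{\infty}$ off a thin set $\Theta\subseteq K$ (in the remaining case $[K(\zeta_8):K]<4$, where condition~(2c) fails for every $x_0$, the same ``cyclotomic $+$ geometric'' analysis shows $[M_{\infty}:G_{\infty}]$ equals its finite generic value, predicted by Pink's theorem \cite{Pink} over $K(t)$). One then stratifies $\Theta$ according to which of $-x_0$, $1+x_0$, $-x_0(1+x_0)$ has become a square; on each stratum $K_5$ degenerates to an explicit smaller radical tower, so $[M_{\infty}:G_{\infty}]$ can be computed for the generic member directly, and the finitely many exceptions within a stratum are where a \emph{further} square coincidence occurs --- again a thin, and frequently visibly finite, condition. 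Carrying this out would at least confirm the conjecture for $K=\QQ$ and describe the exceptional set in general, reducing the full conjecture to the uniform finiteness of square coincidences discussed above.
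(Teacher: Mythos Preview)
The paper does not prove this statement: it is explicitly presented as a conjecture, motivated by computations, and no argument (not even a sketch) is given for it. There is nothing in the paper to compare your proposal against.

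Your proposal is itself not a proof, and to your credit you say so: you correctly isolate the hard step as a finiteness statement for ``unexpected'' square relations among the quantities $v+1$ with $v\in f^{-n}(x_0)$, and you correctly note that for the family $z^2+c$ such statements are currently known only under abc or Vojta (cf.\ the references \cite{BriTuc,GNT,JKetal} already in the paper). That diagnosis is exactly why the authors state a conjecture rather than a theorem. So the ``genuine gap'' here is the one you yourself name, and it is real.

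One factual correction, however. You write that the implication $G_5\cong M_5\Rightarrow G_\infty\cong M_\infty$ ``is proved by an induction establishing $G_n\cong M_n\Rightarrow G_{n+1}\cong M_{n+1}$,'' and you plan to reuse that inductive machinery. No such induction appears in the paper. The proof of Theorem~\ref{thm:condition} instead goes through the Frattini subgroup: Theorem~\ref{thm:fratM} computes $\Phi\subseteq M_\infty$ with $[M_\infty:\Phi]=16$, condition~(2c) forces $G_\infty$ to meet all $16$ cosets of $\Phi$, and Proposition~\ref{prop:fratbasic} then gives $G_\infty=M_\infty$ in one stroke. If you want a level-by-level Kummer induction of the kind you describe, you will have to build it yourself; it is not available to borrow.

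A smaller point: your claim that the cyclotomic image is open ``for all but finitely many $x_0$'' undersells the truth. By Lemma~\ref{lem:pickzeta} we have $K(\zeta_{2^\infty})\subseteq K_\infty$ for every $x_0\neq 0,-1$, and by Theorem~\ref{thm:Pembed} the map $P$ on $G_\infty$ is literally the $2$-adic cyclotomic character. For a number field $K$ that character has open image in $\ZZ_2^\times$, independently of $x_0$. So the cyclotomic factor of the index is always finite, and the entire content of the conjecture lies in the geometric kernel, as you eventually conclude.
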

We must allow for finitely many exceptional $x_0$ in Conjecture~\ref{conj:basic};
for example, it is not hard to see that $[M_{\infty}:G_{\infty}]=\infty$
if $x_0$ is periodic.
More generally, in light of our main theorem and the results of \cite{BFHJY},
as well as \cite[Conjecture~1.1]{JonMan} and \cite[Theorem~1.1]{BHL},
we propose the following broader conjecture.
\begin{conj}
\label{conj:general}
Let $k$ be a number field, and
let $\phi(z)\in k(z)$ be a rational function of degree $d\geq 2$,
defined over $k$.
Let $L$ be the function field $L:=k(t)$,
and let $G(\phi,k)$ be the arboreal Galois group of $\phi$ over $L$
with root point $t$; that is,
%viewed as a subgroup of $\Aut(T_{d,\infty})$. That is,
\[ G(\phi,k) :=\Gal(L_{\infty}/L), \quad \text{where} \quad
L_{\infty} :=\bigcup_{n\geq 0} L\big(\phi^{-n}(t)\big) .\]
Then for any finite extension $K/k$ and any $x_0\in\PP^1(K)$,
the associated arboreal Galois group $G_{\infty}:=\Gal(K_{\infty}/K)$
is isomorphic to a subgroup of $G(\phi,k)$.
Moreover, with $K=k$, it is possible to choose $x_0$
so that $G_{\infty}$ is the full group $G(\phi,k)$.
\end{conj}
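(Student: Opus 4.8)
Conjecture~\ref{conj:general} splits into the embedding assertion and the realizability assertion. The plan is to obtain the embedding by the specialization/decomposition-group method that is standard for arboreal representations. First I would record that $\phi^n(z)-t$ is separable over $k(t)$ for every $n$ (a repeated root would be algebraic over $k$, hence could not equal the transcendental $t$ after applying $\phi^n$), so the generic backward orbit carries the full tree $T_{d,\infty}$ and $G(\phi,k)$ is a well-defined closed subgroup of $\Aut(T_{d,\infty})$. For a finite extension $K/k$, since $L_{K,\infty}=K(t)\cdot L_{k,\infty}$, restriction realizes $G(\phi,K)=\Gal(L_{K,\infty}/K(t))$ tree-equivariantly as the subgroup of $G(\phi,k)$ fixing $K(t)\cap L_{k,\infty}$. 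Next I would regard the tower $L_{K,n}/K(t)$ as a sequence of finite Galois covers of $\PP^1_K$ with the poles and critical values of the iterates of $\phi$ removed; then for $x_0\in\PP^1(K)$ not lying in the forward orbit of the critical values of $\phi$ (a sparse set, and finite when $\phi$ is PCF) every $\phi^n(z)-x_0$ is separable, the covers are unramified over the place $t=x_0$, and so the decomposition group $D_{x_0}\le G(\phi,K)$ maps isomorphically onto the residual group $G_\infty=\Gal(K_\infty/K)$ --- equivariantly for the tree, because the vertical maps $\phi$ have coefficients independent of $t$ and hence commute with reduction modulo $t-x_0$. This yields $G_\infty\cong D_{x_0}\hookrightarrow G(\phi,K)\hookrightarrow G(\phi,k)$; the excluded $x_0$ are handled by the same argument on the collapsed tree, where nontrivial inertia records the collapse.

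For the realizability assertion I would produce $K/k$ and $x_0\in\PP^1(K)$ with $D_{x_0}$ equal to all of $G(\phi,k)$. Since $D_{x_0}\le G(\phi,K)\le G(\phi,k)$, I first need $G(\phi,K)=G(\phi,k)$, which by the above holds exactly when $K$ is linearly disjoint over $k$ from the constant field $\kbar\cap L_{k,\infty}$; the choice $K=k$ already achieves this, but retaining the freedom to enlarge $K$ costs nothing and mirrors the Main Theorem, where one takes $K=\QQ$ precisely to secure the full cyclotomic image inside $M_{\infty}$. With such a $K$ fixed, it remains to find $x_0\in\PP^1(K)$, outside the branch locus, for which the specialization is surjective \emph{at every level simultaneously}. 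The tool is the Hilbert Irreducibility Theorem ($K(t)$ is Hilbertian), which for each fixed $n$ produces a set of $x_0$, of thin complement, with $D_{x_0}\twoheadrightarrow\Gal(L_{K,n}/K(t))$; the difficulty is that this controls one finite level at a time.

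The hard part, and the reason the statement is conjectural rather than a theorem, is a \emph{finite-level stabilization} principle: the existence of an $N=N(\phi,K)$ such that any closed subgroup of $G(\phi,K)$ surjecting onto the level-$N$ quotient $\Gal(L_{K,N}/K(t))$ is already all of $G(\phi,K)$. Granting such an $N$, I would finish by applying Hilbert irreducibility to the single cover $L_{K,N}/K(t)$ to obtain $x_0\in\PP^1(K)$ avoiding the finitely many branch points of that cover together with the sparse forward orbit of $\phi$'s critical values, with $D_{x_0}\twoheadrightarrow\Gal(L_{K,N}/K(t))$; then $D_{x_0}=G(\phi,K)=G(\phi,k)$, and by the embedding assertion $G_\infty\cong G(\phi,k)$. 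This is exactly the mechanism realized in the present paper for $\phi(z)=z^2-1$ over $\QQ$: the Main Theorem supplies $N=5$, condition~(2c) is the checkable incarnation of level-$5$ surjectivity, and the critical orbit is $\{0,-1\}$, which is why $x_0\ne 0,-1$ is assumed. Establishing such an $N$ for a general $\phi$ is the principal obstacle; it amounts to showing that the Frattini-type obstruction to topologically generating $G(\phi,K)$ is concentrated at a bounded level (in the spirit of the finite-level criteria behind the conditional finite-index results of \cite{BriTuc,GNT,JKetal}). I expect this to be extractable from the self-similar / iterated-monodromy structure of $G(\phi,K)$ when $\phi$ is PCF (as in the basilica analysis of \cite{Pink,Nek,BGN} and the cubic of \cite{BFHJY}), and from known large-image results in the generic non-PCF case, but it is out of reach in general --- it already presupposes that $G(\phi,K)$ is topologically finitely generated, which is not known for arbitrary $\phi$.
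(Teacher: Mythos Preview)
The statement is a conjecture, and the paper does not prove it; the only argument the paper offers is the one-line remark that the embedding assertion ``is an immediate consequence of the fact that when we specialize to $t=x_0$, the resulting Galois group $G_{\infty}$ is the associated decomposition subgroup of $G(\phi,k)$,'' while the realizability assertion is explicitly left as ``the main content'' of the conjecture. Your treatment is consistent with this: you supply the decomposition-group argument for the embedding (in considerably more detail than the paper gives), and you correctly identify the realizability assertion as genuinely open, isolating the missing ingredient as a finite-level stabilization principle and noting that the paper's Main Theorem is exactly such a principle (with $N=5$) in the special case $\phi(z)=z^2-1$.

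One small caution on your embedding argument: you restrict attention to $x_0$ outside the forward critical-value orbit so that the specialization is unramified and the decomposition group maps isomorphically onto $G_\infty$, and then wave at ``the excluded $x_0$'' via a collapsed tree. The paper's formulation of the conjecture allows \emph{all} $x_0\in\PP^1(K)$, and its justification of the embedding does not carve out exceptions; the decomposition subgroup still surjects onto $G_\infty$ in the ramified case (the residue field extension is what $G_\infty$ is), so the embedding holds without exclusions. Your parenthetical about handling those $x_0$ is correct in spirit but could be tightened. Otherwise your analysis matches the paper's, and your diagnosis of why the second assertion is conjectural---topological finite generation of $G(\phi,K)$ is not known in general---is apt.
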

The first conclusion of Conjecture~\ref{conj:general} is an immediate consequence
of the fact that when we specialize to $t=x_0$, the resulting Galois group $G_{\infty}$
is the associated decomposition subgroup of $G(\phi,k)$.
Thus, the main content of Conjecture~\ref{conj:general} is the second conclusion: that
we can choose $x_0$ so that the inclusion of $G_{\infty}$ in $G(\phi,k)$
is an (equivariant) isomorphism.

%\begin{conj}
%\label{conj:general}
%Let $\phi(z)\in\Qbar(z)$ be a rational function of degree $d\geq 2$.
%Then there is a subgroup $G(\phi)\subseteq\Aut(T_{d,\infty})$
%with the following property:
%
%Let $K$ be a number field over which $\phi$ is defined,
%and let $x_0\in\PP^1(K)$.
%Then the associated arboreal Galois group $G_{\infty}$
%is isomorphic to a subgroup of $G(\phi)$.
%Moreover, it is possible to choose $K$ and $x_0$
%so that $G_{\infty}$ is the full group $G(\phi)$.
%\end{conj}
In the notation of Conjecture~\ref{conj:general}, one can also ask
for sufficient conditions that $G_{\infty}$ has finite index in $G(\phi,k)$.
Besides periodic $x_0$, we also have $[G(\phi,k):G_{\infty}]=\infty$
if some $\Orb_\phi^-(x_0)$ contains a critical point of $\phi$;
if $\phi$ is not PCF, then this can happen for an infinite (but thin) set of $x_0\in K$.
Another example arises for $\phi(z)=z^2$: for $x_0=-1$,
we have $K_{\infty}=L$, where $L=K(\zeta_{2^{\infty}})$,
but for $x_0=3$, we have $K_{\infty}=L(3^{1/2^{\infty}})$,
which is an infinite extension of $L$.

The outline of the paper is as follows.
In Section~\ref{sec:zeta}, we we discuss labelings of a binary tree
and explicitly construct 2-power roots of unity in $K_{\infty}$.
Motivated by this construction, in Section~\ref{sec:Pdef} we define the quantity
$P(\sigma,x)\in\ZZ_2$ that we mentioned just before our Main Theorem above,
and in Section~\ref{sec:Mn},
%which is purely group-theoretic,
we use $P$ to define our arithmetic basilica group $M_{\infty}$.
In Section~\ref{sec:Basil}, we recall the definition and properties
of the closed basilica group $\overline{B}_{\infty}$,
and we study the finite groups $M_n$ and $B_n$ formed by restricting
$M_{\infty}$ and $\overline{B}_{\infty}$ to the finite subtree $T_n$.
We also prove Theorems~\ref{thm:Bndef} and~\ref{thm:MBmap},
that $P:M_{\infty}\to\ZZ_2^{\times}$ is surjective, with kernel $\overline{B}_{\infty}$.
Finally, Section~\ref{sec:mainproof} is devoted to the proof
of statement~(2) of our Main Theorem and related results.

\section{Realizing roots of unity}
\label{sec:zeta}
In this section, we explicitly construct $2$-power roots of unity
from iterated preimages of $f(z)=z^2-1$.
First, we set the following notation.
\begin{tabbing}
\hspace{8mm} \= \hspace{15mm} \=  \kill
\> $K$: \> a field of characteristic different from $2$, with algebraic closure $\Kbar$ \\
\> $f$: \> the polynomial $f(z)=z^2-1$ \\
\> $x_0$: \> an element of $K$, to serve as the root of our preimage tree \\
\> $T_n$: \> a binary rooted tree, extending $n$ levels above its root node \\
\> $T_\infty$: \> a binary rooted tree, extending infinitely above its root node \\
\> $K_n$: \> for each $n\geq 0$, the extension field $K_n:=K(f^{-n}(x_0))$ \\
\> $K_\infty$: \> the union $K_{\infty} = \bigcup_{n\geq 1} K_n$ in $\Kbar$ \\
\> $G_n$: \> the Galois group $\Gal(K_n/K_0)$ \\
\> $G_{\infty}$: \> the Galois group $\Gal(K_\infty/K_0)$
\end{tabbing}
We will often abuse notation and write $x_0$ for both a chosen point in $K=K_0$,
and for the root point of the abstract tree $T_n$ or $T_{\infty}$. The $2^m$ nodes at the
$m$-th level of the abstract tree then correspond to the points of $f^{-m}(x_0)$
(repeated with appropriate multiplicity if $x_0$ is in the critical orbit), with a particular
node $w$ lying directly above another node $y$ if and only if $f(w)=y$.

\begin{figure}
\begin{tikzpicture}
\path[draw] (.2,2) -- (.8,1.2) -- (1.4,2);
\path[draw] (2.6,2) -- (3.2,1.2) -- (3.8,2);
\path[draw] (.8,1.2) -- (2,.4) -- (3.2,1.2);
\path[fill] (.2,2) circle (0.1);
\path[fill] (1.4,2) circle (0.1);
\path[fill] (2.6,2) circle (0.1);
\path[fill] (3.8,2) circle (0.1);
\path[fill] (.8,1.2) circle (0.1);
\path[fill] (3.2,1.2) circle (0.1);
\path[fill] (2,.4) circle (0.1);
\node (y) at (1.7,.1) {$y$};
\node (u) at (.4,1) {$u$};
\node (negu) at (3.6,1) {$-u$};
\node (a1) at (-.2,1.9) {$\alpha_1$};
\node (a2) at (2.2,1.9) {$\alpha_2$};
\end{tikzpicture}
\caption{Lemma~\ref{lem:2down}: $(\alpha_1 \alpha_2)^2 = -y$}
\label{fig:2down}
\end{figure}
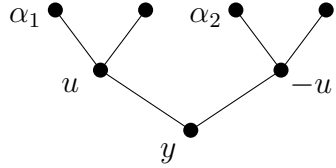

\begin{lemma}
\label{lem:2down}
Let $y\in\Kbar$, and let $\alpha_1,\alpha_2\in f^{-2}(y)$ with
$f(\alpha_2)=-f(\alpha_1)$. Then
\[ (\alpha_1 \alpha_2)^2 = -y . \]
\end{lemma}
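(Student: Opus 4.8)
The plan is to unwind the definitions directly. Write $u = f(\alpha_1)$, so that $f(\alpha_2) = -u$ by hypothesis, and both $u, -u \in f^{-1}(y)$, meaning $u^2 - 1 = y$ and $(-u)^2 - 1 = y$ (consistent, as it must be). Since $\alpha_1 \in f^{-1}(u)$ we have $\alpha_1^2 - 1 = u$, i.e. $\alpha_1^2 = 1 + u$; similarly $\alpha_2^2 = 1 + f(\alpha_2) = 1 - u$. Therefore
\[
(\alpha_1\alpha_2)^2 = \alpha_1^2\,\alpha_2^2 = (1+u)(1-u) = 1 - u^2.
\]
Now use $u^2 = 1 + y$ from the previous paragraph to get $(\alpha_1\alpha_2)^2 = 1 - (1+y) = -y$, which is exactly the claim.

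The key steps, in order: (1) introduce $u := f(\alpha_1)$ and record that the hypothesis says $f(\alpha_2) = -u$; (2) note $u \in f^{-1}(y)$ gives $u^2 = y + 1$; (3) expand $\alpha_1^2 = f(\alpha_1) + 1 = u + 1$ and $\alpha_2^2 = f(\alpha_2) + 1 = -u + 1$; (4) multiply and substitute. There is essentially no obstacle here — the lemma is a one-line algebraic identity once the tree relations $f(\alpha_i) = \alpha_i^2 - 1$ are written out, and the figure is just there to fix which nodes $\alpha_1, \alpha_2, u, -u, y$ refer to. The only thing to be mildly careful about is that the conclusion does not depend on which square root labellings are chosen for $\alpha_1, \alpha_2$: the identity $(\alpha_1\alpha_2)^2 = -y$ holds for \emph{any} choice of $\alpha_1 \in f^{-1}(u)$ and $\alpha_2 \in f^{-1}(-u)$, since replacing $\alpha_i$ by $-\alpha_i$ leaves $(\alpha_1\alpha_2)^2$ unchanged. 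This robustness is presumably what makes the lemma useful for the subsequent construction of $2$-power roots of unity in $K_\infty$.

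I would present the proof in three or four lines of display math with minimal prose, since any extra words would obscure how elementary it is. One could also phrase it symmetrically by setting $a = \alpha_1^2 = 1 + u$ and $b = \alpha_2^2 = 1 - u$, observing $a + b = 2$ and $ab = 1 - u^2 = -y$, but the direct substitution is cleanest and needs no auxiliary notation.
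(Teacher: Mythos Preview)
Your proof is correct and is essentially identical to the paper's: both set $u=f(\alpha_1)$, compute $\alpha_1^2=1+u$ and $\alpha_2^2=1-u$, and multiply to get $(1+u)(1-u)=1-u^2=-y$. The only cosmetic difference is that the paper writes the last step as $-(u^2-1)=-f(u)=-y$, whereas you substitute $u^2=1+y$ directly; your additional remarks on sign-independence and the symmetric $a+b=2$, $ab=-y$ reformulation are accurate but not needed for the argument.
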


\begin{proof}
Write $u=f(\alpha_1)$. Then $\alpha_1^2=u+1$ and $\alpha_2^2 = -u+1$.
See Figure~\ref{fig:2down}, showing the tree $T_2$ rising above $y$,
which has preimages $u$ and $-u$, which in turn have preimages $\pm \alpha_1$
and $\pm \alpha_2$, respectively.
Thus,
\[ (\alpha_1 \alpha_2)^2 = (u+1)(-u+1) = -(u^2-1) = -f(u) = -y. \qedhere \]
\end{proof}

\begin{lemma}
\label{lem:nrel}
Let $m\geq 0$, and let $y\in\Kbar$ with $y\neq 0,-1$.
Let $\alpha_{0,1}\in f^{-1}(y)$, and define $\beta_{0,1}:=-\alpha_{0,1}$.
For each $i=1,\ldots, m$, choose points 
$\{\alpha_{i,j} , \beta_{i,j}: 1\leq j \leq 2^{i}\}\subseteq f^{-(2i+1)}(y)$ so that
\[ f^{-2}\big(\alpha_{i-1,\ell}\big) = \{ \pm \alpha_{i,{2\ell -1}}, \pm \alpha_{i,{2\ell}} \}
\quad\text{and}\quad
f^{-2}\big(\beta_{i-1,\ell}\big) = \{ \pm \beta_{i,{2\ell -1}}, \pm \beta_{i,{2\ell}} \} \]
for each $\ell=1,\ldots,2^{i-1}$,
as in Figure~\ref{fig:ablevel5}. Define
\[ \gamma_m:= \prod_{j=1}^{2^{m}} \alpha_{m,j}
\quad\text{and}\quad
\delta_m:= \prod_{j=1}^{2^{m}} \beta_{m,j} . \]
Then $\dsps (-\gamma_m)^{2^{m}} = \beta_{0,1}$
and $\dsps (-\delta_m)^{2^{m}} = \alpha_{0,1}$.
Moreover,
% if $y\neq -1$, then
$\gamma_m/\delta_m$ is a primitive $2^{m+1}$-th root of unity.
\end{lemma}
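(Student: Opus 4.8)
The plan is to reduce the whole lemma to iterated applications of Lemma~\ref{lem:2down}, organized as a two-step induction on $m$. The first thing I would pin down is the separability bookkeeping: for every $i$ and $\ell$ the point $\alpha_{i-1,\ell}$ lies in $f^{-(2i-1)}(y)$, and $\alpha_{i-1,\ell}\in\{0,-1\}$ would force $y=f^{2i-1}(\alpha_{i-1,\ell})\in\{f^{2i-1}(0),f^{2i-1}(-1)\}=\{-1,0\}$, which is excluded. So $\alpha_{i-1,\ell}\notin\{0,-1\}$, its two $f$-preimages are a set $\{u,-u\}$ with $u\neq0$ and $u\neq\pm1$ (else $\alpha_{i-1,\ell}$ would be $0$ or $-1$), and $f^{-2}(\alpha_{i-1,\ell})$ consists of four distinct nonzero points. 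The only two-element subsets of $\{\pm\alpha_{i,2\ell-1},\pm\alpha_{i,2\ell}\}$ stable under $z\mapsto -z$ are $\{\pm\alpha_{i,2\ell-1}\}$ and $\{\pm\alpha_{i,2\ell}\}$, and these are exactly the fibers $f^{-1}(u)$ and $f^{-1}(-u)$ in one order or the other; hence, after possibly interchanging the names $\alpha_{i,2\ell-1}$ and $\alpha_{i,2\ell}$ (which does not change their product), $f(\alpha_{i,2\ell})=-f(\alpha_{i,2\ell-1})$. The same holds for the $\beta$'s, and since all these preimages are nonzero, $\gamma_m,\delta_m\in\Kbar^{\times}$.

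With that in place, Lemma~\ref{lem:2down}, applied with root point $\alpha_{i-1,\ell}$ (and with $\beta_{i-1,\ell}$ for the companion statement), gives
\[ \big(\alpha_{i,2\ell-1}\,\alpha_{i,2\ell}\big)^2 = -\alpha_{i-1,\ell} \qquad\text{and}\qquad \big(\beta_{i,2\ell-1}\,\beta_{i,2\ell}\big)^2 = -\beta_{i-1,\ell} \]
for all $1\le i\le m$ and $1\le\ell\le 2^{i-1}$. Grouping the factors of $\gamma_m$ in consecutive pairs and squaring then yields
\[ \gamma_m^2 \;=\; \prod_{\ell=1}^{2^{m-1}}\big(\alpha_{m,2\ell-1}\,\alpha_{m,2\ell}\big)^2 \;=\; \prod_{\ell=1}^{2^{m-1}}\big(-\alpha_{m-1,\ell}\big) \;=\; (-1)^{2^{m-1}}\,\gamma_{m-1}, \]
so $\gamma_1^2=-\gamma_0=-\alpha_{0,1}=\beta_{0,1}$, while for $m\ge2$ the sign is trivial and $\gamma_m^2=\gamma_{m-1}$. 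A one-line induction on $m\ge1$ then gives $\gamma_m^{2^m}=(\gamma_m^2)^{2^{m-1}}=\gamma_{m-1}^{2^{m-1}}=\cdots=\gamma_1^2=\beta_{0,1}$, and since $2^m$ is even this equals $(-\gamma_m)^{2^m}$. Running the identical argument with $\beta$ in place of $\alpha$ gives $\delta_1^2=-\delta_0=-\beta_{0,1}=\alpha_{0,1}$, hence $(-\delta_m)^{2^m}=\delta_m^{2^m}=\alpha_{0,1}$.

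For the final assertion I would simply divide: $(\gamma_m/\delta_m)^{2^m}=\gamma_m^{2^m}/\delta_m^{2^m}=\beta_{0,1}/\alpha_{0,1}=-1$, using $\beta_{0,1}=-\alpha_{0,1}\in\Kbar^{\times}$. Thus $\zeta:=\gamma_m/\delta_m$ satisfies $\zeta^{2^{m+1}}=1$; since $\charact K\neq2$ the group of $2^{m+1}$-th roots of unity is cyclic of order $2^{m+1}$, and $\zeta$ does not lie in its index-$2$ subgroup because $\zeta^{2^m}=-1\neq1$. Hence $\zeta$ is a primitive $2^{m+1}$-th root of unity.

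The only step that needs real care is the first one: extracting the hypothesis $f(\alpha_{i,2\ell})=-f(\alpha_{i,2\ell-1})$ of Lemma~\ref{lem:2down} from the set-theoretic labeling conditions, and verifying that no node of the relevant finite subtree is a critical value, so that every fiber has full size and all the products are units. After that, the proof is just the parity observation that $2^{m-1}$ is even for $m\ge2$ but $2^0$ is odd, which makes the product of signs telescope correctly.
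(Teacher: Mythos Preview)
Your proof is correct and follows essentially the same approach as the paper: apply Lemma~\ref{lem:2down} at each pair to get $(\alpha_{i,2\ell-1}\alpha_{i,2\ell})^2=-\alpha_{i-1,\ell}$, telescope using the parity of $2^{m-1}$, and divide to obtain the primitive root of unity. Your write-up is in fact more careful than the paper's, which simply invokes Lemma~\ref{lem:2down} without spelling out why $f(\alpha_{i,2\ell})=-f(\alpha_{i,2\ell-1})$ or why no node equals $0$ or $-1$; the only omission is that you do not explicitly dispatch the trivial base case $m=0$.
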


\begin{proof} The conclusion is trivially true for $m=0$.
Proceeding inductively, consider $m\geq 1$,
and assume it holds for $m-1$.
For each $\ell=1,\ldots, 2^{m-1}$, we have
\[ \big(\alpha_{m,2\ell-1} \alpha_{m,2\ell} \big)^2 = -\alpha_{m-1,\ell}
\quad\text{and}\quad \big(\beta_{m,2\ell-1} \beta_{m,2\ell} \big)^2 = -\beta_{m-1,\ell} \]
by Lemma~\ref{lem:2down}. It follows immediately that
\[ (-\gamma_m)^2 = 
\begin{cases}
\gamma_{m-1} & \text{ if } m\geq 2, \\
-\gamma_{m-1} & \text{ if } m=1,
\end{cases}
\quad\text{and}\quad
(-\delta_m)^2 = \begin{cases}
\delta_{m-1} & \text{ if } m\geq 2, \\
-\delta_{m-1} & \text{ if } m=1.
\end{cases} \]
Raising each to the power $2^{m-1}$, which is $1$ if $m=1$ and even for $m\geq 2$, we have
\[ (-\gamma_m)^{2^{m}} = (-\gamma_{m-1})^{2^{m-1}} = \beta_{0,1}
\quad\text{and}\quad
(-\delta_m)^{2^{m}} = (-\delta_{m-1})^{2^{m-1}} = \alpha_{0,1}, \]
as desired. The final statement is immediate from the fact that $\alpha_{0,1}\neq 0$
(since $y\neq -1$), and hence $\beta_{0,1}/\alpha_{0,1}=-1$.
\end{proof}

\begin{figure}
\begin{tikzpicture}
\path[draw] (0.2,4) -- (0.4,3.2) -- (0.6,4);
\path[draw] (1,4) -- (1.2,3.2) -- (1.4,4);
\path[draw] (1.8,4) -- (2,3.2) -- (2.2,4);
\path[draw] (2.6,4) -- (2.8,3.2) -- (3,4);
\path[draw] (3.4,4) -- (3.6,3.2) -- (3.8,4);
\path[draw] (4.2,4) -- (4.4,3.2) -- (4.6,4);
\path[draw] (5,4) -- (5.2,3.2) -- (5.4,4);
\path[draw] (5.8,4) -- (6,3.2) -- (6.2,4);
\path[draw] (6.6,4) -- (6.8,3.2) -- (7,4);
\path[draw] (7.4,4) -- (7.6,3.2) -- (7.8,4);
\path[draw] (8.2,4) -- (8.4,3.2) -- (8.6,4);
\path[draw] (9,4) -- (9.2,3.2) -- (9.4,4);
\path[draw] (9.8,4) -- (10,3.2) -- (10.2,4);
\path[draw] (10.6,4) -- (10.8,3.2) -- (11,4);
\path[draw] (11.4,4) -- (11.6,3.2) -- (11.8,4);
\path[draw] (12.2,4) -- (12.4,3.2) -- (12.6,4);
\path[fill,gray] (0.2,4) circle (0.19);
\path[fill] (0.6,4) circle (0.1);
\path[fill,gray] (1,4) circle (0.19);
\path[fill] (1.4,4) circle (0.1);
\path[fill] (1.8,4) circle (0.1);
\path[fill] (2.2,4) circle (0.1);
\path[fill] (2.6,4) circle (0.1);
\path[fill] (3,4) circle (0.1);
\path[fill,gray] (3.4,4) circle (0.19);
\path[fill] (3.8,4) circle (0.1);
\path[fill,gray] (4.2,4) circle (0.19);
\path[fill] (4.6,4) circle (0.1);
\path[fill] (5,4) circle (0.1);
\path[fill] (5.4,4) circle (0.1);
\path[fill] (5.8,4) circle (0.1);
\path[fill] (6.2,4) circle (0.1);
\path[fill,gray] (6.6,4) circle (0.19);
\path[fill] (7,4) circle (0.1);
\path[fill,gray] (7.4,4) circle (0.19);
\path[fill] (7.8,4) circle (0.1);
\path[fill] (8.2,4) circle (0.1);
\path[fill] (8.6,4) circle (0.1);
\path[fill] (9,4) circle (0.1);
\path[fill] (9.4,4) circle (0.1);
\path[fill,gray] (9.8,4) circle (0.19);
\path[fill] (10.2,4) circle (0.1);
\path[fill,gray] (10.6,4) circle (0.19);
\path[fill] (11,4) circle (0.1);
\path[fill] (11.4,4) circle (0.1);
\path[fill] (11.8,4) circle (0.1);
\path[fill] (12.2,4) circle (0.1);
\path[fill] (12.6,4) circle (0.1);
\path[draw] (0.4,3.2) -- (0.8,2.4) -- (1.2,3.2);
\path[draw] (2,3.2) -- (2.4,2.4) -- (2.8,3.2);
\path[draw] (3.6,3.2) -- (4,2.4) -- (4.4,3.2);
\path[draw] (5.2,3.2) -- (5.6,2.4) -- (6,3.2);
\path[draw] (6.8,3.2) -- (7.2,2.4) -- (7.6,3.2);
\path[draw] (8.4,3.2) -- (8.8,2.4) -- (9.2,3.2);
\path[draw] (10,3.2) -- (10.4,2.4) -- (10.8,3.2);
\path[draw] (11.6,3.2) -- (12,2.4) -- (12.4,3.2);
\path[fill] (0.4,3.2) circle (0.1);
\path[fill] (1.2,3.2) circle (0.1);
\path[fill] (2.0,3.2) circle (0.1);
\path[fill] (2.8,3.2) circle (0.1);
\path[fill] (3.6,3.2) circle (0.1);
\path[fill] (4.4,3.2) circle (0.1);
\path[fill] (5.2,3.2) circle (0.1);
\path[fill] (6,3.2) circle (0.1);
\path[fill] (6.8,3.2) circle (0.1);
\path[fill] (7.6,3.2) circle (0.1);
\path[fill] (8.4,3.2) circle (0.1);
\path[fill] (9.2,3.2) circle (0.1);
\path[fill] (10,3.2) circle (0.1);
\path[fill] (10.8,3.2) circle (0.1);
\path[fill] (11.6,3.2) circle (0.1);
\path[fill] (12.4,3.2) circle (0.1);
\path[draw] (0.8,2.4) -- (1.6,1.6) -- (2.4,2.4);
\path[draw] (4,2.4) -- (4.8,1.6) -- (5.6,2.4);
\path[draw] (7.2,2.4) -- (8,1.6) -- (8.8,2.4);
\path[draw] (10.4,2.4) -- (11.2,1.6) -- (12,2.4);
\path[fill,gray] (0.8,2.4) circle (0.19);
\path[fill] (2.4,2.4) circle (0.1);
\path[fill,gray] (4,2.4) circle (0.19);
\path[fill] (5.6,2.4) circle (0.1);
\path[fill,gray] (7.2,2.4) circle (0.19);
\path[fill] (8.8,2.4) circle (0.1);
\path[fill,gray] (10.4,2.4) circle (0.19);
\path[fill] (12,2.4) circle (0.1);
\path[draw] (1.6,1.6) -- (3.2,0.8) -- (4.8,1.6);
\path[draw] (8,1.6) -- (9.6,0.8) -- (11.2,1.6);
\path[fill] (1.6,1.6) circle (0.1);
\path[fill] (4.8,1.6) circle (0.1);
\path[fill] (8,1.6) circle (0.1);
\path[fill] (11.2,1.6) circle (0.1);
\path[draw] (3.2,0.8) -- (6.4,0.2) -- (9.6,0.8);
\path[fill,gray] (3.2,0.8) circle (0.19);
\path[fill,gray] (9.6,0.8) circle (0.19);
\path[fill] (6.4,0.2) circle (0.1);
\node (y) at (6.75,0) {$y$};
\node (a0) at (2.7,0.6) {$\alpha_{0,1}$};
\node (b0) at (10.9,0.6) {$\beta_{0,1}=-\alpha_{0,1}$};
\node (a21) at (.5,2) {$\alpha_{1,1}$};
\node (a22) at (3.7,2) {$\alpha_{1,2}$};
\node (b21) at (7,2) {$\beta_{1,1}$};
\node (b22) at (10.2,2) {$\beta_{1,2}$};
\node (a31) at (0,4.4) {$\alpha_{2,1}$};
\node (a32) at (1,4.4) {$\alpha_{2,2}$};
\node (a33) at (3.3,4.4) {$\alpha_{2,3}$};
\node (a34) at (4.3,4.4) {$\alpha_{2,4}$};
\node (b31) at (6.5,4.4) {$\beta_{2,1}$};
\node (b32) at (7.4,4.4) {$\beta_{2,2}$};
\node (b33) at (9.7,4.4) {$\beta_{2,3}$};
\node (b34) at (10.7,4.4) {$\beta_{2,4}$};
\end{tikzpicture}
\caption{Lemma~\ref{lem:nrel} for $m=2$.}
\label{fig:ablevel5}
\end{figure}

In light of the careful selection of points in Lemma~\ref{lem:nrel}, i.e.,
the highlighted nodes in Figure~\ref{fig:ablevel5}, we will need a precise
system for labeling the nodes of a binary tree, as follows.

\begin{defin}
\label{def:labeling}
%Let $x_0$ denote the root point of the tree $T_{\infty}$.
A \emph{labeling} of $T_{\infty}$ is a choice of
two tree morphisms $a,b:T_{\infty}\to T_{\infty}$ such that
$a$ maps $T_{\infty}$ bijectively onto the subtree rooted at one
of the two nodes connected to the root node $x_0$,
and $b$ maps $T_{\infty}$ bijectively onto the subtree rooted at the other.

For any integer $n\geq 1$, a labeling of $T_n$ is a choice of
two injective tree morphisms $a,b:T_{n-1}\to T_n$
with the same property.
\end{defin}

To see why the choice of maps $a,b$ in Definition~\ref{def:labeling}
should be considered a ``labeling'' of each node of the tree,
consider a node $y$ at the $m$-th level of $T_{\infty}$.
By our choice of the maps $a,b$,
there is a unique ordered $m$-tuple $(s_1,\ldots,s_m) \in \{a,b\}^m$
%such that $y=s_m\circ \cdots \circ s_1 (x_0)$.
such that $y=s_1\circ \cdots \circ s_m (x_0)$.
Thus, it makes sense to label the node $y$ with the $m$-tuple
$(s_1,\ldots,s_m)$. The node directly underneath $y$
then has label $(s_1,\ldots,s_{m-1})$.
We will usually dispense with the punctuation and write
$s_1 s_2 \cdots s_m$ instead of $(s_1,\ldots,s_m)$.
We will also frequently abuse notation and
refer to a node $y$ and its label in $\{a,b\}^m$ interchangeably.

Note that the order we have written the $m$-tuple $(s_1,\ldots,s_m)$
%is the opposite of the order of composition 
%$y=s_m\circ \cdots \circ s_1 (x_0)$. Instead, it is
is also the order we trace
up the tree when following the path from $x_0$ to $y$. That is, $s_1$
tells us whether to go left ($a$) or right ($b$) to get from the root node
to level~$1$; $s_2$ tells us whether to go left or right from there to level~$2$;
and so on until we arrive at $y$.
See Figure~\ref{fig:treelabel}.

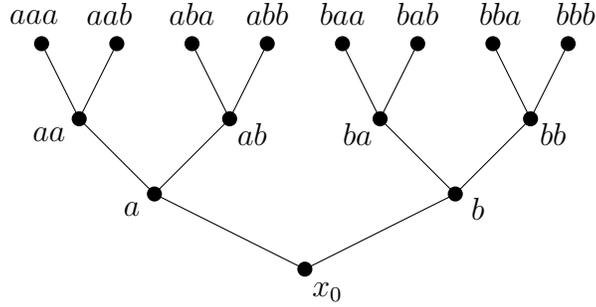
\begin{figure}
\begin{tikzpicture}
\path[draw] (.4,3.5) -- (.9,2.5) -- (1.4,3.5);
\path[draw] (2.4,3.5) -- (2.9,2.5) -- (3.4,3.5);
\path[draw] (4.4,3.5) -- (4.9,2.5) -- (5.4,3.5);
\path[draw] (6.4,3.5) -- (6.9,2.5) -- (7.4,3.5);
\path[draw] (.9,2.5) -- (1.9,1.5) -- (2.9,2.5);
\path[draw] (4.9,2.5) -- (5.9,1.5) -- (6.9,2.5);
\path[draw] (1.9,1.5) -- (3.9,0.5) -- (5.9,1.5);
\path[fill] (.4,3.5) circle (0.1);
\path[fill] (1.4,3.5) circle (0.1);
\path[fill] (2.4,3.5) circle (0.1);
\path[fill] (3.4,3.5) circle (0.1);
\path[fill] (4.4,3.5) circle (0.1);
\path[fill] (5.4,3.5) circle (0.1);
\path[fill] (6.4,3.5) circle (0.1);
\path[fill] (7.4,3.5) circle (0.1);
\path[fill] (0.9,2.5) circle (0.1);
\path[fill] (2.9,2.5) circle (0.1);
\path[fill] (4.9,2.5) circle (0.1);
\path[fill] (6.9,2.5) circle (0.1);
\path[fill] (1.9,1.5) circle (0.1);
\path[fill] (5.9,1.5) circle (0.1);
\path[fill] (3.9,0.5) circle (0.1);
\node (a) at (1.6,1.3) {$a$};
\node (b) at (6.2,1.3) {$b$};
\node (aa) at (0.5,2.3) {$aa$};
\node (ab) at (3.2,2.3) {$ab$};
\node (ba) at (4.6,2.3) {$ba$};
\node (bb) at (7.2,2.3) {$bb$};
\node (aaa) at (0.3,3.85) {$aaa$};
\node (aab) at (1.3,3.9) {$aab$};
\node (aba) at (2.4,3.9) {$aba$};
\node (abb) at (3.4,3.9) {$abb$};
\node (baa) at (4.4,3.9) {$baa$};
\node (bab) at (5.4,3.9) {$bab$};
\node (bba) at (6.5,3.9) {$bba$};
\node (bbb) at (7.5,3.9) {$bbb$};
\node (x0) at (4.2,0.2) {$x_0$};
\end{tikzpicture}
\caption{A labeling of $T_3$}
\label{fig:treelabel}
\end{figure}

By Lemma~\ref{lem:nrel}, the field $K_{\infty}$ formed by
adjoining all preimages $f^{-n}(x_0)$ to $K_0$ contains all $2$-power roots of unity.
Our next result shows that we can choose a labeling to make this statement more precise.

\begin{lemma}
\label{lem:pickzeta}
Let $x_0\in K$ with $x_0\neq 0,-1$.
Choose a sequence $\{ \zeta_2, \zeta_4,\zeta_8,\ldots\}$
of primitive $2$-power roots of unity in $\Kbar$,
in such a way that $(\zeta_{2^{m}})^2 = \zeta_{2^{m-1}}$
for each $m\geq 1$. It is possible to label the tree $T_{\infty}$
of preimages $\Orb_f^-(x_0)$ in such a way that
for every node $y$ of the tree and for every $i\geq 0$, we have
\begin{equation}
\label{eq:zetaprod}
\Bigg( \prod_{s_1,\ldots, s_{i}\in\{a,b\}} [yas_1 as_2 \cdots as_{i}a] \Bigg)
\Bigg( \prod_{s_1,\ldots, s_{i}\in\{a,b\}} [ybs_1 as_2 \cdots as_{i}a] \Bigg)^{-1} = \zeta_{2^{i+1}} ,
\end{equation}
where $[w]$ denotes the element of $\Kbar$ that appears in the tree as a node with label $w$.
\end{lemma}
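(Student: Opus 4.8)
The plan is to rephrase \eqref{eq:zetaprod} as a multiplicative relation among products of single-level preimages and then build the labeling one level at a time. For a node $y$ --- necessarily $y\neq 0,-1$, since $0$ and $-1$ are periodic and the hypothesis $x_0\neq 0,-1$ keeps them out of the backward orbit --- and an integer $i\geq 0$, let $\gamma_i(y)$ be the product of all $[y a s_1 a s_2 a\cdots a s_i a]$ over $(s_1,\ldots,s_i)\in\{a,b\}^i$, and let $\delta_i(y)$ be the analogous product with the leading $a$ replaced by $b$, so the left side of \eqref{eq:zetaprod} is $\eta_i(y):=\gamma_i(y)/\delta_i(y)$; the goal is to choose a labeling with $\eta_i(y)=\zeta_{2^{i+1}}$ for every node $y$ and every $i\geq 0$. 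First I would record what already holds for an \emph{arbitrary} labeling: $\eta_0(y)=[ya]/[yb]=-1=\zeta_2$ (the two children of any node are negatives of one another), and Lemma~\ref{lem:nrel}, applied at the base point $y$ with the nested preimages $\alpha_{i,j},\beta_{i,j}$ dictated by the labeling, shows that $\eta_i(y)$ is always a primitive $2^{i+1}$-th root of unity; its proof moreover gives the compatibility $\eta_i(y)^2=\eta_{i-1}(y)$ for every $i\geq 1$.

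Next I would build the labeling by induction on the level. Suppose $T_n$ has been labeled so that $\eta_i(y)=\zeta_{2^{i+1}}$ holds for every pair $(y,i)$ with $\operatorname{level}(y)+2i+1\leq n$ --- exactly the instances of \eqref{eq:zetaprod} supported inside $T_n$ --- and extend to $T_{n+1}$. The new data is, for each node $v$ at level $n$, a choice of which of its two children is the $a$-child; flipping this choice at $v$ negates every node-value $[vaw]$ and leaves all others unchanged. The instances of \eqref{eq:zetaprod} that become relevant are the pairs $(y,i)$ with $\operatorname{level}(y)=n-2i$ and $1\leq i\leq\lfloor n/2\rfloor$. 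For such a pair, the $2^i$ factors of $\gamma_i(y)$ and the $2^i$ factors of $\delta_i(y)$ lie at level $n+1$, and their level-$n$ parents --- the ``$\gamma_i(y)$-parents'' and ``$\delta_i(y)$-parents,'' a set of $2^{i+1}$ distinct level-$n$ nodes --- are precisely the level-$n$ nodes at which a child-flip changes $\eta_i(y)$, each flip negating a single factor. Since $\eta_{i-1}(y)=\zeta_{2^i}$ by the inductive hypothesis and $\eta_i(y)^2=\eta_{i-1}(y)$, we have $\eta_i(y)\in\{\pm\zeta_{2^{i+1}}\}$ however level $n$ is labeled, so imposing $\eta_i(y)=\zeta_{2^{i+1}}$ amounts to the single requirement that the product of the flip-choices over the $\gamma_i(y)$- and $\delta_i(y)$-parents equal a prescribed sign in $\{\pm1\}$.

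I would then meet all these requirements greedily, running $i$ from $\lfloor n/2\rfloor$ down to $1$ and, within a stage, handling the $y$'s at level $n-2i$ independently (distinct same-level $y$ yield disjoint parent sets, lying in disjoint subtrees). The linchpin is a bookkeeping fact about labels: a level-$n$ node is a $\delta_i(y)$-parent for \emph{at most one} index $i$, and is a $\gamma_{i'}(y')$-parent only for indices $i'$ strictly smaller than that one --- as one sees by noting that in the label of a $\delta_i(y)$-parent, read downward from the top, the run of forced ``$a$''s in the relevant slots has length exactly $i-1$, pinning $i$ down. Hence when stage $i$ is reached, all $2^i$ of the $\delta_i(y)$-parents are still unassigned; after setting any leftover $\gamma_i(y)$-parents to ``no flip,'' we may choose the flips of the $\delta_i(y)$-parents so that the prescribed product sign holds. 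Earlier stages used only parent sets of strictly larger index and so left these alone, and the instances supported in $T_n$ are untouched because they involve no level-$n$ choice; any level-$n$ nodes still unassigned at the end are set to ``no flip.'' The main obstacle is precisely this last step --- that the freedom at level $n$ really does suffice to enforce \emph{all} the newly relevant instances of \eqref{eq:zetaprod} at once --- and it rests on two points needing careful justification: the reduction of each instance to a single $\FF_2$-linear constraint on the flip-choices (where $\eta_i^2=\eta_{i-1}$ and the induction enter), and the combinatorial analysis that, for each such constraint, isolates its $\delta$-parents as a block of ``private'' free variables untouched by constraints of larger index. Granting these, the greedy assignment closes the induction, and the resulting labeling of $T_\infty$ satisfies \eqref{eq:zetaprod} at every node and for every $i$.
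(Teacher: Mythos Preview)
Your approach is correct and essentially the same as the paper's: both build the labeling level by level, processing the newly relevant instances of \eqref{eq:zetaprod} from largest $i$ down to smallest and using the ``$b$ in position $2i$ from the right'' observation to guarantee that the adjustments at stage $i$ do not disturb the constraints already handled at larger indices. The paper streamlines the bookkeeping by flipping exactly one designated $\delta_i(y)$-parent per constraint (namely the node $ybaa\cdots a$), whereas you more generally argue that all $2^i$ of the $\delta_i(y)$-parents remain unassigned when stage $i$ is reached; this is a presentational difference rather than a mathematical one.
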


Lemma~\ref{lem:pickzeta} says that it is always possible to choose a labeling
of the tree of preimages $\Orb_f^-(x_0)$ so that
for the nodes $\alpha_{i,j}$ and $\beta_{i,j}$ highlighted
as in Figure~\ref{fig:ablevel5}, we have
\[ \frac{\alpha_{1,1} \alpha_{1,2}}{\beta_{1,1}\beta_{1,2}}=\zeta_4, \quad
\frac{\alpha_{2,1} \alpha_{2,2}\alpha_{2,3} \alpha_{2,4}}
{\beta_{2,1}\beta_{2,2} \beta_{2,3}\beta_{2,4}}=\zeta_8, \quad
\text{ and so on}.\]
In fact, it says that we can label the tree so that these relationships hold simultaneously
for every subtree of the full tree.
%, i.e., based at any node $y$ of the full tree.
By contrast, Lemma~\ref{lem:nrel} says that after applying an arbitrary automorphism $\tau$
of $T_{\infty}$, which is roughly equivalent to choosing an arbitrary labeling of the tree,
any such product of elements of $f^{-(2m+1)}(y)$ is \emph{some} primitive
$2^{m+1}$-root of unity, albeit not necessarily the particular root $\zeta_{2^{m+1}}$.
Finding precisely which power of $\zeta_{2^{m+1}}$ would then be given
by the resulting variant of expression~\eqref{eq:zetaprod} would be a finite
but presumably stringy computation. Thus, our strategy in later sections
will be to use Lemma~\ref{lem:pickzeta}
to fix a labeling of the tree once and for all.

\begin{proof}[Proof of Lemma~\ref{lem:pickzeta}]
The assumption that $x_0\neq 0,-1$ implies that no node $y\in\Orb_f^-(x_0)$ is $0$ or $-1$.
We will label the tree of preimages inductively, starting from the root point $x_0$
and working our way up.
To begin, label the two preimages of $x_0$ arbitrarily as $a$ and $b$.
Similarly, arbitrarily label the two preimages of $a$ as $aa$ and $ab$,
and the two preimages of $b$ as $ba$ and $bb$. Thus, we have a labeling on the copy
of $T_2$ rooted at $x_0$. For each of the nodes $y\in\{x_0,a,b\}$,
we have $(ya)/(yb)=-1=\zeta_2$.
Thus, the desired identity~\eqref{eq:zetaprod} holds at every node of $T_1$ for $i=0$.

For each successive $n\geq 3$,
suppose that we have labeled $T_{n-1}$ in such a way that for every node $y$
at every level $0\leq \ell\leq n-2$ of $T_{n-1}$,
and for every $0\leq i \leq \lfloor (n-\ell-2)/2 \rfloor$,
the identity of equation~\eqref{eq:zetaprod} holds.
For each node $x$ at level $n-1$, label the two points
of $f^{-1}(x)$ arbitrarily as $xa$ and $xb$. We will now adjust these labels that we have just
applied at the $n$-th level of the tree.

If $n=2m+1$ is odd,
consider the product on the left side of equation~\eqref{eq:zetaprod} for $y=x_0$, with $i=m$;
or if $n=2m+2$ is even,
consider this product for each of $y=a$ and $y=b$, with $i=m$.
As in the proof of Lemma~\ref{lem:nrel}, it is immediate from Lemma~\ref{lem:2down}
that the square of this product is precisely the corresponding
quantity for $y$ with $i=m-1$. (When $m=1$, each half has a negative sign,
but the negatives cancel in that case.)
By our successful labeling of $T_{n-1}$, this square is $\zeta_{2^{m}}$.
Thus, the original product is $\pm\zeta_{2^{m+1}}$. If it is $-\zeta_{2^{m+1}}$, exchange the labels
of the two level-$n$ nodes $ybaa\cdots aa$ and $ybaa\cdots ab$; otherwise, make no
label changes for now. Since these two points in $f^{-n}(x_0)$ are negatives of each other,
equation~\eqref{eq:zetaprod} now holds for $y$ with $i=m$.

Next, consider the product on the left side of equation~\eqref{eq:zetaprod} with $i=m-1$
for each node $y$ at level $2$ of the tree (if $n=2m+1$ is odd) or at level $3$ (if $n=2m+2$ is even).
By Lemma~\ref{lem:2down} and our labeling of $T_{n-1}$ again,
the square of this product is $\zeta_{2^{m-1}}$, and hence the product itself
is $\pm\zeta_{2^{m}}$. If it is $-\zeta_{2^{m}}$, exchange the labels
of the two level-$n$ nodes $ybaa\cdots aa$ and $ybaa\cdots ab$; otherwise, make no
label changes for now. Since these two points in $f^{-n}(x_0)$ are negatives of each other,
equation~\eqref{eq:zetaprod} now holds for $y$ with $i=m-1$.
In addition, because both of these nodes have labels beginning $yb\cdots$, they did
not show up in the product of equation~\eqref{eq:zetaprod} for nodes strictly lower on the tree
than $y$, so we have not disrupted our previous arrangements.

Continue in this fashion, considering nodes at successive even levels $\ell$ of the tree
(if $n$ is odd) or odd levels $\ell$ of the tree (if $n$ is even). For each such node $y$,
choose whether or not to switch the labels of $ybaa\cdots aa$ and $ybaa\cdots ab$
to ensure that equation~\eqref{eq:zetaprod} holds for $y$ with $i=(n-\ell-1)/2$.
Once we have finished working our way up
through level $\ell=n-1$, we have labeled $T_n$
so that for every node $y$ at every level $0\leq \ell\leq n-1$ of $T_n$,
and for every $0\leq i \leq \lfloor (n-\ell-1)/2 \rfloor$,
the identity of equation~\eqref{eq:zetaprod} holds.
Thus, our inductive construction is complete.
\end{proof}

\section{A special infinite sum on the tree}
\label{sec:Pdef}
Any tree automorphism
$\sigma\in\Aut(T_\infty)$ or $\sigma\in\Aut(T_n)$
must permute the $2^m$ nodes at each level $m$ of the tree.
Moreover, since $\sigma$ must preserve the tree structure,
for each node $x$ of the (labeled) tree, it must map the set
$\{xa,xb\}$ of two nodes above $x$ to the set $\{\sigma(x)a,\sigma(x)b\}$
of two nodes above $\sigma(x)$.
Thus, for any tree automorphism $\sigma$ and $m$-tuple $x\in\{a,b\}^m$, we define 
the \emph{parity} $\Par(\sigma,x)\in\ZZ$ of $\sigma$ at $x$ to be
\begin{equation}
\label{eq:sgndef}
\Par(\sigma,x) := \begin{cases}
0 & \text{ if } \sigma(xa)=\sigma(x)a \text{ and } \sigma(xb)=\sigma(x)b
\\
1 & \text{ if } \sigma(xa)=\sigma(x)b \text{ and } \sigma(xb)=\sigma(x)a
\end{cases}
\end{equation}
Observe that any set of choices of $\Par(\sigma,x)$ for each node
$x$ of $T_{\infty}$ (respectively, $T_{n-1}$) determines a
unique automorphism $\sigma\in \Aut(T_{\infty})$ (respectively, $\sigma\in\Aut(T_n)$).

If $\sigma(x)=x$, then $\Par(\sigma,x)$ is $0$ if $\sigma$ fixes
the two nodes above $x$, or $1$ if it transposes them. However, $\Par(\sigma,x)$
is defined even when $\sigma(x)\neq x$, although in that case its value depends also on the
labeling of the tree.

For a Galois element $\sigma\in G_n$ or $\sigma\in G_{\infty}$, considered as
acting on the tree $T_n$ or $T_{\infty}$ of preimages of $x_0$, we will need to understand
the action of $\sigma$ on $2$-power roots of unity $\zeta$. Specifically,
we must have $\sigma(\zeta)=\zeta^P$ for some $P=P(\sigma)$
in the group $(\ZZ/2^j\ZZ)^\times$ (if $\sigma\in G_n$, where $j:=\lfloor (n+1)/2 \rfloor$),
or in $\ZZ_2^{\times}$ (if $\sigma\in G_{\infty}$).
Lemmas~\ref{lem:nrel} and~\ref{lem:pickzeta} inspire the following candidate for this power $P$.

\begin{defin}
\label{def:Pmap}
Fix a labeling of $T_{\infty}$, and let $\sigma\in\Aut(T_{\infty})$.
For any node $x$ of $T_{\infty}$, define
\begin{equation}
\label{eq:Qdef}
Q(\sigma,x):= \sum_{i\geq 0} 2^i \sum_{s_1,\ldots,s_i\in\{a,b\}}
\Par(\sigma,xas_1as_2\cdots as_i) \in \ZZ_2,
\end{equation}
and
\begin{equation}
\label{eq:Pdef}
P(\sigma,x):= (-1)^{\Par(\sigma,x)}  + 2\sum_{t\in\{a,b\}} Q(\sigma,xbt) - 
2\sum_{t\in\{a,b\}}Q(\sigma,xat) \in \ZZ_2^{\times} .
\end{equation}
In addition, for any $n\geq m\geq 0$, any node $x$ at level $m$ of $T_n$,
and any $\tau\in\Aut(T_n)$, set $j:=\lfloor (n-m+1)/2 \rfloor$,
and define $P(\tau,x)\in(\ZZ/2^j\ZZ)^{\times}$ to be
\begin{equation}
\label{eq:Pdefmod}
P(\tau,x):\equiv P(\tilde{\tau},x) \pmod{2^j},
\end{equation}
where $\tilde{\tau}\in\Aut(T_{\infty})$ is any extension of $\tau$ to all of $T_{\infty}$.
\end{defin}

Regarding equation~\eqref{eq:Pdefmod},
note that every $\tau\in\Aut(T_n)$ has infinitely many extensions
$\tilde{\tau}\in\Aut(T_{\infty})$, since we may choose the parity
$\Par(\tilde{\tau},y)$ at each node $y$ at levels $n+1$ and higher
to be either $0$ or $1$ as we please. However, the definition
of $P(\tau,x)$ in equation~\eqref{eq:Pdefmod} is independent of the
extension $\tilde{\tau}$, since the contributions
of $\Par(\tilde{\tau},y)$ for nodes $y$ at levels $n+1$ and higher
from equations~\eqref{eq:Qdef} and~\eqref{eq:Pdef} all have coefficients
divisible by $2^j$. That is, when computing $P(\tau,x)$, we may simply
truncate the sums in equations~\eqref{eq:Qdef} and~\eqref{eq:Pdef}
to include only the contributions from nodes at levels $n-1$ and below.

It is immediate from equation~\eqref{eq:Qdef} that
\begin{equation}
\label{eq:Qnext}
Q(\sigma,x) = \Par(\sigma,x) + 2\sum_{s\in\{a,b\}}Q(\sigma,xas) ,
\end{equation}
where we understand this equation to be an equality in $\ZZ_2^{\times}$
in the $T_{\infty}$ case, and a congruence modulo an appropriate power of $2$
in the $T_n$ case.

To help explain Definition~\ref{def:Pmap}, observe that
$P(\sigma,x)$ is $\pm 1$ plus a weighted sum of $\Par(\sigma,y)$
at certain nodes $y$, chosen based on the labeling of the tree.
For example, Figure~\ref{fig:sgn1level5} shows the nodes in question up to level $5$.
To compute $P=P(\sigma,x)$ in that case, we count the highlighted nodes as follows:
\begin{itemize}
\item count gray circles $y$ for which $\Par(\sigma,y)=1$ with weight $-2$,
\item count white circles $y$ for which $\Par(\sigma,y)=1$ with weight $2$,
\item count gray squares $y$ for which $\Par(\sigma,y)=1$ with weight $-4$,
\item count white squares $y$ for which $\Par(\sigma,y)=1$ with weight $4$,
\end{itemize}
and so on up the tree.
Finally, add $1$ if $\Par(\sigma,x)=0$
or add $-1$ if $\Par(\sigma,x)=1$.

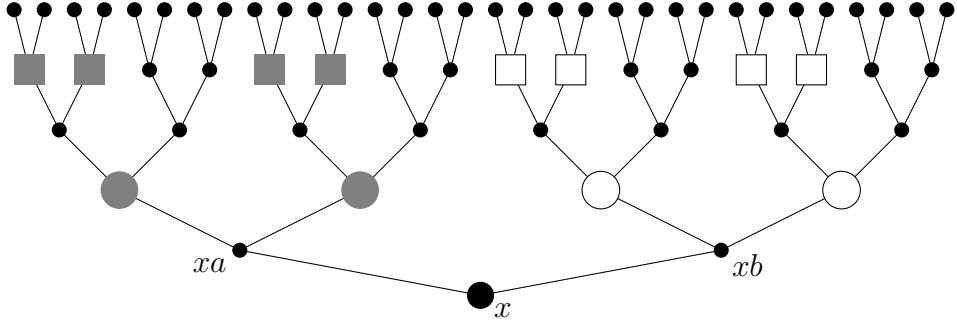
\begin{figure}
\begin{tikzpicture}
\path[draw] (0.2,4) -- (0.4,3.2) -- (0.6,4);
\path[draw] (1,4) -- (1.2,3.2) -- (1.4,4);
\path[draw] (1.8,4) -- (2,3.2) -- (2.2,4);
\path[draw] (2.6,4) -- (2.8,3.2) -- (3,4);
\path[draw] (3.4,4) -- (3.6,3.2) -- (3.8,4);
\path[draw] (4.2,4) -- (4.4,3.2) -- (4.6,4);
\path[draw] (5,4) -- (5.2,3.2) -- (5.4,4);
\path[draw] (5.8,4) -- (6,3.2) -- (6.2,4);
\path[draw] (6.6,4) -- (6.8,3.2) -- (7,4);
\path[draw] (7.4,4) -- (7.6,3.2) -- (7.8,4);
\path[draw] (8.2,4) -- (8.4,3.2) -- (8.6,4);
\path[draw] (9,4) -- (9.2,3.2) -- (9.4,4);
\path[draw] (9.8,4) -- (10,3.2) -- (10.2,4);
\path[draw] (10.6,4) -- (10.8,3.2) -- (11,4);
\path[draw] (11.4,4) -- (11.6,3.2) -- (11.8,4);
\path[draw] (12.2,4) -- (12.4,3.2) -- (12.6,4);
\path[fill] (0.2,4) circle (0.1);
\path[fill] (0.6,4) circle (0.1);
\path[fill] (1,4) circle (0.1);
\path[fill] (1.4,4) circle (0.1);
\path[fill] (1.8,4) circle (0.1);
\path[fill] (2.2,4) circle (0.1);
\path[fill] (2.6,4) circle (0.1);
\path[fill] (3,4) circle (0.1);
\path[fill] (3.4,4) circle (0.1);
\path[fill] (3.8,4) circle (0.1);
\path[fill] (4.2,4) circle (0.1);
\path[fill] (4.6,4) circle (0.1);
\path[fill] (5,4) circle (0.1);
\path[fill] (5.4,4) circle (0.1);
\path[fill] (5.8,4) circle (0.1);
\path[fill] (6.2,4) circle (0.1);
\path[fill] (6.6,4) circle (0.1);
\path[fill] (7,4) circle (0.1);
\path[fill] (7.4,4) circle (0.1);
\path[fill] (7.8,4) circle (0.1);
\path[fill] (8.2,4) circle (0.1);
\path[fill] (8.6,4) circle (0.1);
\path[fill] (9,4) circle (0.1);
\path[fill] (9.4,4) circle (0.1);
\path[fill] (9.8,4) circle (0.1);
\path[fill] (10.2,4) circle (0.1);
\path[fill] (10.6,4) circle (0.1);
\path[fill] (11,4) circle (0.1);
\path[fill] (11.4,4) circle (0.1);
\path[fill] (11.8,4) circle (0.1);
\path[fill] (12.2,4) circle (0.1);
\path[fill] (12.6,4) circle (0.1);
\path[draw] (0.4,3.2) -- (0.8,2.4) -- (1.2,3.2);
\path[draw] (2,3.2) -- (2.4,2.4) -- (2.8,3.2);
\path[draw] (3.6,3.2) -- (4,2.4) -- (4.4,3.2);
\path[draw] (5.2,3.2) -- (5.6,2.4) -- (6,3.2);
\path[draw] (6.8,3.2) -- (7.2,2.4) -- (7.6,3.2);
\path[draw] (8.4,3.2) -- (8.8,2.4) -- (9.2,3.2);
\path[draw] (10,3.2) -- (10.4,2.4) -- (10.8,3.2);
\path[draw] (11.6,3.2) -- (12,2.4) -- (12.4,3.2);
%\path[fill] (0.4,3.2) circle (0.1);
\path[fill,gray] (0.2,3.0) -- (0.6,3.0) -- (0.6,3.4) -- (0.2,3.4);
%\path[fill] (1.2,3.2) circle (0.1);
\path[fill,gray] (1.0,3.0) -- (1.4,3.0) -- (1.4,3.4) -- (1.0,3.4);
\path[fill] (2.0,3.2) circle (0.1);
\path[fill] (2.8,3.2) circle (0.1);
%\path[fill] (3.6,3.2) circle (0.1);
\path[fill,gray] (3.4,3.0) -- (3.8,3.0) -- (3.8,3.4) -- (3.4,3.4);
%\path[fill] (4.4,3.2) circle (0.1);
\path[fill,gray] (4.2,3.0) -- (4.6,3.0) -- (4.6,3.4) -- (4.2,3.4);
\path[fill] (5.2,3.2) circle (0.1);
\path[fill] (6,3.2) circle (0.1);
%\path[fill] (6.8,3.2) circle (0.1);
\path[fill,white] (6.6,3.0) -- (7,3.0) -- (7,3.4) -- (6.6,3.4);
\path[draw] (6.6,3.0) -- (7,3.0) -- (7,3.4) -- (6.6,3.4) -- (6.6,3.0);
%\path[fill] (7.6,3.2) circle (0.1);
\path[fill,white] (7.4,3.0) -- (7.8,3.0) -- (7.8,3.4) -- (7.4,3.4);
\path[draw] (7.4,3.0) -- (7.8,3.0) -- (7.8,3.4) -- (7.4,3.4) -- (7.4,3.0);
\path[fill] (8.4,3.2) circle (0.1);
\path[fill] (9.2,3.2) circle (0.1);
%\path[fill] (10,3.2) circle (0.1);
\path[fill,white] (9.8,3.0) -- (10.2,3.0) -- (10.2,3.4) -- (9.8,3.4);
\path[draw] (9.8,3.0) -- (10.2,3.0) -- (10.2,3.4) -- (9.8,3.4) -- (9.8,3.0);
%\path[fill] (10.8,3.2) circle (0.1);
\path[fill,white] (10.6,3.0) -- (11,3.0) -- (11,3.4) -- (10.6,3.4);
\path[draw] (10.6,3.0) -- (11,3.0) -- (11,3.4) -- (10.6,3.4) -- (10.6,3.0);
\path[fill] (11.6,3.2) circle (0.1);
\path[fill] (12.4,3.2) circle (0.1);
\path[draw] (0.8,2.4) -- (1.6,1.6) -- (2.4,2.4);
\path[draw] (4,2.4) -- (4.8,1.6) -- (5.6,2.4);
\path[draw] (7.2,2.4) -- (8,1.6) -- (8.8,2.4);
\path[draw] (10.4,2.4) -- (11.2,1.6) -- (12,2.4);
\path[fill] (0.8,2.4) circle (0.1);
\path[fill] (2.4,2.4) circle (0.1);
\path[fill] (4,2.4) circle (0.1);
\path[fill] (5.6,2.4) circle (0.1);
\path[fill] (7.2,2.4) circle (0.1);
\path[fill] (8.8,2.4) circle (0.1);
\path[fill] (10.4,2.4) circle (0.1);
\path[fill] (12,2.4) circle (0.1);
\path[draw] (1.6,1.6) -- (3.2,0.8) -- (4.8,1.6);
\path[draw] (8,1.6) -- (9.6,0.8) -- (11.2,1.6);
%\path[fill] (1.6,1.6) circle (0.1);
%\path[fill,gray] (1.4,1.4) -- (1.8,1.4) -- (1.8,1.8) -- (1.4,1.8);
\path[fill,gray] (1.6,1.6) circle (0.25);
%\path[fill] (4.8,1.6) circle (0.1);
%\path[fill,gray] (4.6,1.4) -- (5,1.4) -- (5,1.8) -- (4.6,1.8);
\path[fill,gray] (4.8,1.6) circle (0.25);
%\path[fill] (8,1.6) circle (0.1);
\path[fill,white] (8,1.6) circle (0.25);
\path[draw] (8,1.6) circle (0.25);
%\path[fill] (11.2,1.6) circle (0.1);
\path[fill,white] (11.2,1.6) circle (0.25);
\path[draw] (11.2,1.6) circle (0.25);
\path[draw] (3.2,0.8) -- (6.4,0.2) -- (9.6,0.8);
\path[fill] (3.2,0.8) circle (0.1);
\path[fill] (9.6,0.8) circle (0.1);
\path[fill] (6.4,0.2) circle (0.18);
\node (x) at (6.7,0) {$x$};
\node (xa) at (2.8,0.6) {$xa$};
\node (xb) at (9.95,0.6) {$xb$};
\end{tikzpicture}
\caption{$P(\sigma,x)$ is a weighted sum of  $\Par(\sigma,y)$
at the highlighted nodes $y$.}
\label{fig:sgn1level5}
\end{figure}
Note that the nodes highlighted in Figure~\ref{fig:sgn1level5},
and hence figuring into the sum defining $P$, 
are the \emph{parents} of the nodes highlighted in
Figure~\ref{fig:ablevel5} from Lemma~\ref{lem:nrel}.
Indeed, we now prove that $P(\sigma,x)$ is exactly
the requisite power suggested just before Definition~\ref{def:Pmap}.

\begin{thm}
\label{thm:Pembed}
Let $x_0\in K$ with $x_0\neq 0,-1$.
Choose a sequence $\{ \zeta_2, \zeta_4,\zeta_8,\ldots\}$
of primitive $2$-power roots of unity in $K_{\infty}$,
with $(\zeta_{2^{m}})^2 = \zeta_{2^{m-1}}$ for each $m\geq 1$.
Label the tree $T_{\infty}$ of preimages $\Orb_f^-(x_0)$
as in Lemma~\ref{lem:pickzeta}.
Then for any node $y\in\Orb_f^-(x_0)$,
any $2$-power root of unity $\zeta\in K_{\infty}$,
and any $\sigma\in G_{\infty}=\Gal(K_{\infty}/K)$, we have
%$\sigma(\zeta) = \zeta^{P(\sigma,y)}$.
\[ \sigma(\zeta) = \zeta^{P(\sigma,y)}. \]
%%\quad\text{ for every $2$-power root of unity } \zeta\in K_{\infty}
%%\text{ and every } \sigma\in G_{\infty}, \]
%%where $P(\sigma,y)$ is the map of Definition~\ref{def:Pmap}.
\end{thm}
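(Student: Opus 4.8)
The plan is to reduce everything to the special products of tree nodes built in Lemma~\ref{lem:nrel}, whose ratios we arranged in Lemma~\ref{lem:pickzeta} to be exactly the chosen roots of unity $\zeta_{2^{i+1}}$, and then to compute how a Galois element $\sigma$ acts on those products in terms of the parities $\Par(\sigma,\cdot)$. First I would observe that it suffices to treat $\zeta=\zeta_{2^{i+1}}$ for each $i\geq 0$ (every $2$-power root of unity is a power of some such $\zeta$, and the map $P$ appears only as an exponent, so the statement for $\zeta_{2^{i+1}}$ for all $i$ gives it for all $\zeta$). Fix a node $y$ at some level $m$; by Lemma~\ref{lem:pickzeta}, with the labeling there,
\[
\zeta_{2^{i+1}} = A_i(y)\, B_i(y)^{-1}, \qquad
A_i(y):=\prod_{s_1,\ldots,s_i\in\{a,b\}} [yas_1as_2\cdots as_ia], \quad
B_i(y):=\prod_{s_1,\ldots,s_i\in\{a,b\}} [ybs_1as_2\cdots as_ia],
\]
where $[w]$ is the element of $\Kbar$ at node $w$. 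Now $\sigma$ acts on the tree by permuting nodes, so $\sigma([w]) = [\sigma(w)]$, and hence $\sigma(A_i(y))$ is the product of the \emph{same multiset of nodes with each label $w$ replaced by $\sigma(w)$}. The key point is to track, for each factor $[yas_1\cdots as_ia]$, what $\sigma$ does: the string of choices $as_1\cdots as_ia$ gets modified according to the parities $\Par(\sigma,\cdot)$ along the path, and—crucially—the terminal letter is forced to be $a$ in the definition of $A_i$ but $\sigma$ may send it to the sibling node. This is precisely where the top-level parity $\Par(\sigma,\cdot)$ at the parents of the $\alpha$- and $\beta$-nodes (the nodes highlighted in Figure~\ref{fig:sgn1level5}) enters, and it is why $P$ is a weighted sum over exactly those parents.

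The technical heart is an induction on $i$. For the base case $i=0$: $A_0(y)=[ya]$, $B_0(y)=[yb]$, and by Lemma~\ref{lem:nrel} applied to $\sigma$ (it says any such ratio is \emph{some} primitive $2$nd root, i.e. $-1$, which $\zeta_2=-1$ already is), $\sigma(\zeta_2)=\zeta_2=\zeta_2^{\pm1}$; matching this against $(-1)^{\Par(\sigma,y)}$, which is the leading term of $P(\sigma,y)$, and noting the higher $Q$-terms in \eqref{eq:Pdef} contribute multiples of $2$ while $\zeta_2^{1}=\zeta_2^{-1}$, this already pins down the statement mod the first relevant power of $2$. For the inductive step I would use the recursion \eqref{eq:Qnext} together with Lemma~\ref{lem:2down}: squaring the $i$-level product $A_i(y)B_i(y)^{-1}$ relates it to the $(i-1)$-level product (this is exactly the computation in the proofs of Lemmas~\ref{lem:nrel} and~\ref{lem:pickzeta}), so $\sigma$ acting on $\zeta_{2^{i+1}}$ is compatible, upon squaring, with $\sigma$ acting on $\zeta_{2^i}$, giving $P(\sigma,y) \equiv (\text{exponent at level } i{-}1) \pmod{2^{i}}$; comparing with \eqref{eq:Qdef}–\eqref{eq:Pdef} shows the new digit of $P(\sigma,y)$ in its $2$-adic expansion is governed by the parities $\Par(\sigma,\cdot)$ at the level-$i$ parent nodes, weighted by $2^{i}$, with the sign coming from whether those parents lie under the $a$-branch or the $b$-branch of $y$ (accounting for the $-2\sum Q(\sigma,xat)$ versus $+2\sum Q(\sigma,xbt)$ in \eqref{eq:Pdef}).

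I expect the main obstacle to be the careful bookkeeping in that inductive step: one must verify that when $\sigma$ permutes the factors of $A_i(y)$, the factors whose terminal letter gets flipped from $a$ to $b$ are exactly compensated—up to a sign $(-1)^{\text{number of flips}}$—by corresponding factors in $B_i(y)$, so that $\sigma(A_i(y))\,\sigma(B_i(y))^{-1}$ is again a product of ratios of the Lemma~\ref{lem:nrel} type but now over a possibly different node, and the accumulated sign is precisely $(-1)^{\sum \Par}$ over the highlighted nodes. Managing the distinction between the $T_\infty$ case (genuine equality in $\ZZ_2^\times$) and the truncated $T_n$ congruences, and checking that the $m=1$ "the negatives cancel" subtlety from Lemma~\ref{lem:pickzeta} does not disturb the exponent count, are the places where a careless argument would go wrong. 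Everything else is a direct unwinding of the definitions \eqref{eq:Qdef}, \eqref{eq:Pdef}, \eqref{eq:Qnext} against the product identities already established.
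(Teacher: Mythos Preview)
Your proposal has the right skeleton---reduce to the product formula for $\zeta_{2^{i+1}}$ from Lemma~\ref{lem:pickzeta} and track how $\sigma$ moves the labels---but it diverges from the paper in structure and leaves the hardest step underspecified.

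The paper does \emph{not} induct on $i$. For each fixed $m$ it directly evaluates $\sigma$ applied to the product $\prod_{s_1,\ldots,s_{m-1}} [x s_1 a s_2 \cdots a s_{m-1} a]$ (for $x=ya$ and $x=yb$ separately) by pulling $\sigma$ outward one letter at a time. Two elementary identities do all the work: a ``switch'' step
\[
\prod_{t_1,\ldots,t_i} \big[\sigma(wa)\, t_1 a \cdots a t_i a\big]
\;=\; \zeta_{2^{i+1}}^{-\Par(\sigma,w)}
\prod_{t_1,\ldots,t_i} \big[\sigma(w)\, a\, t_1 a \cdots a t_i a\big],
\]
obtained by applying equation~\eqref{eq:zetaprod} at the node $\sigma(w)$, and an ``order-swap'' step $\prod_{s} [\sigma(w_1 s)\, w_2] = \prod_{s} [\sigma(w_1)\, s\, w_2]$, which holds because $\{\sigma(w_1 a),\sigma(w_1 b)\}=\{\sigma(w_1)a,\sigma(w_1)b\}$ as sets. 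Alternating these peels the string off symbol by symbol, emitting a root-of-unity factor at each forced-$a$ position; the accumulated exponent is then matched term-by-term against \eqref{eq:Qdef}--\eqref{eq:Pdef}, with the leftover quotient $\prod[\sigma(ya)\cdots]/\prod[\sigma(yb)\cdots]$ supplying the leading $(-1)^{\Par(\sigma,y)}$.

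Your inductive route---squaring to drop from level $i$ to $i-1$ and then pinning down one remaining sign---is not wrong in principle, but the sign determination at the inductive step still requires tracking parities at \emph{every} forced-$a$ position along the label, not just the terminal letter as your last paragraph suggests. The picture ``terminal letter gets flipped \ldots compensated by corresponding factors in $B_i(y)$'' is too coarse: each intermediate $a$ in $y\, a\, s_1\, a\, s_2 \cdots a\, s_i\, a$ may flip under $\sigma$, and each such flip contributes a root-of-unity factor from a \emph{different} order $2^{j}$, which is precisely what the alternating switch/order-swap computation records and what makes the sums $P_j(\sigma,yb)-P_j(\sigma,ya)$ appear. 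Carried out honestly, your inductive step unfolds into the paper's direct calculation, so the induction buys no real simplification.
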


\begin{proof}
It suffices to show that $\sigma(\zeta_{2^m}) = \zeta_{2^m}^{P(\sigma,y)}$
for every $\sigma\in G_{\infty}$, every $m\geq 1$,
and every $y\in \Orb_f^-(x_0)$.
Throughout the proof, then, fix such $\sigma$, $m$, and $y$.

Because of the choice of labeling of the tree specified in the hypotheses,
for any point $w\in\Orb_f^-(x_0)$ and any $i\geq 0$,
we have
\begin{equation}
\label{eq:wswitch}
\prod_{t_1,\ldots, t_{i}} \big[ \sigma(wa)t_1 at_2 \cdots at_{i}a \big]
= \zeta_{2^{i+1}}^{-\Par(\sigma,w)}
\prod_{t_1,\ldots, t_{i}} \big[ \sigma(w)at_1 at_2 \cdots at_{i}a \big] .
\end{equation}
Indeed, if $\Par(\sigma,w)=1$, then equation~\eqref{eq:wswitch}
is simply equation~\eqref{eq:zetaprod} of Lemma~\ref{lem:pickzeta} applied to $\sigma(w)$;
and if $\Par(\sigma,w)=0$, then \eqref{eq:wswitch} is a tautology.
Each product in equation~\eqref{eq:wswitch} is over $t_1,\ldots,t_{i}\in\{a,b\}$;
and for $i=0$, we understand it to say $[\sigma(wa)] = \zeta_2^{-\Par(\sigma,w)} [\sigma(w)a]$.

In addition, since the two-element sets $\{\sigma(wa),\sigma(wb)\}$
and $\{\sigma(w)a,\sigma(w)b\}$ always coincide, we have
\begin{equation}
\label{eq:orderswap}
\prod_{s\in\{a,b\}} \big[ \sigma(w_1 s) w_2 \big] =
\prod_{s\in\{a,b\}} \big[ \sigma(w_1) s w_2 \big].
\end{equation}
for any strings $w_1$ and $w_2$ of the symbols $a,b$.
Thus, if we define
\[ P_j(\sigma,w):=\sum_{t_1,\ldots,t_j\in\{a,b\}} \Par(\sigma, wt_1at_2a\cdots at_j),\]
then for any node $x$ of the tree and any $m\geq 1$, we have
\begin{align}
\label{eq:bigprod}
\prod \big[\sigma(xs_1as_2\cdots a s_{m-1} a)\big]
&= \zeta_2^{-P_{m-1}(\sigma,x)} \prod \big[\sigma(xs_1as_2\cdots a s_{m-1}) a \big]
\notag \\
&= \zeta_2^{-P_{m-1}(\sigma,x)} \prod \big[\sigma(xs_1as_2\cdots a) s_{m-1} a \big]
\notag \\
&= \zeta_2^{-P_{m-1} (\sigma,x)} \zeta_4^{-P_{m-2}(\sigma,x)}
\prod \big[ \sigma(xs_1as_2\cdots s_{m-2} ) a s_{m-1} a \big]
\\
&= \cdots \notag \\
&=\bigg( \prod_{j=1}^{m-1} \zeta_{2^{m-j}}^{-P_{j}(\sigma,x)} \bigg)
\prod \big[ \sigma(x)s_1as_2\cdots a s_{m-1} a \big],
\notag
\end{align}
where each undecorated product is over $s_1,\ldots,s_{m-1}\in\{a,b\}$.
In proving equation~\eqref{eq:bigprod}, we have alternately 
applied equations~\eqref{eq:wswitch} and~\eqref{eq:orderswap}.
Specifically, we used equation~\eqref{eq:wswitch}
with $i=0$ and $w=xs_1a\cdots a s_{m-1}$ at the first equality,
then with $i=1$ and $w=xs_1a\cdots a s_{m-2}$ at the third,
and so on through $i=m-2$ with $w=x$.
We then used \eqref{eq:orderswap} at the second equality
with $w_1=xs_1a\cdots as_{m-2}a$ and $w_2=a$,
then with $w_1=xs_1a\cdots as_{m-3}a$ and $w_2=as_{m-1}a$ at the fourth,
and so on.

Applying equation~\eqref{eq:bigprod} to both $x=ya$ and $x=yb$,
and substituting the results in equation~\eqref{eq:zetaprod} with $i=m-1$,
we obtain
\begin{align}
\label{eq:sigmazeta}
\sigma(\zeta_{2^m}) &= 
\frac{\prod \big[ \sigma(yas_1 as_2 \ldots as_{m-1}a) \big]}
{\prod \big[ \sigma(ybs_1 as_2 \ldots as_{m-1}a) \big] }
\notag \\
& = \prod_{j=1}^{m-1} \zeta_{2^{m-j}}^{P_{j}(\sigma,yb) - P_{j}(\sigma,ya)}
\cdot \frac{\prod \big[ \sigma(ya)s_1 as_2 \ldots as_{m-1}a \big]}
{\prod \big[ \sigma(yb)s_1 as_2 \ldots as_{m-1}a \big] } ,
\end{align}
where each undecorated product is again over $s_1,\ldots,s_{m-1}\in\{a,b\}$.
Since $\zeta_{2^i}^2=\zeta_{2^{i-1}}$ for each $i$, the first product in
expression~\eqref{eq:sigmazeta} is $\zeta_{2^m}^M$, where
\begin{align*}
M &:= \sum_{j=1}^{m-1} 2^{j} \big(P_{j}(\sigma,yb) - P_{j}(\sigma,ya) \big)
\\
&\equiv 2\sum_{t\in\{a,b\}} Q(\sigma,ybt)
- 2\sum_{t\in\{a,b\}} Q(\sigma,yat) \pmod{2^m} ,
\end{align*}
and where $Q(\sigma,x)$ is as defined in equation~\eqref{eq:Qdef}.
On the other hand, by equation~\eqref{eq:zetaprod} applied to $\sigma(y)$,
the quotient of two products in expression~\eqref{eq:sigmazeta} is
$\zeta_{2^m}$ if $\Par(\sigma,y)=0$, or $\zeta_{2^m}^{-1} $ if $\Par(\sigma,y)=1$.
Thus, equation~\eqref{eq:sigmazeta} becomes
\[ \sigma(\zeta_{2^m}) = \zeta_{2^m}^P,
\quad \text{where} \quad P=(-1)^{\Par(\sigma,y)} +M = P(\sigma,y) .
\qedhere \]
\end{proof}

\section{The arithmetic basilica group}
\label{sec:Mn}
Theorem~\ref{thm:Pembed} says that $\sigma\in G_n$ or $\sigma\in G_{\infty}$
maps a $2$-power root of unity $\zeta$ to $\zeta^{P(\sigma,x)}$.
Since this image should be independent of the node $x$, we make the following definition.

\begin{defin}
\label{def:Mn}
Fix a labeling of $T_{\infty}$.
Let $x_0$ denote the root point of the tree.
Define the \emph{arithmetic basilica group}
$M_{\infty}$ to be the set of all $\sigma\in \Aut(T_{\infty})$
for which 
\[ P(\sigma,x) = P(\sigma,x_0) \quad\text{for every node } x\in T_{\infty} . \]
Similarly, given $n\geq 1$ and a labeling of $T_n$,
define $M_n$ to be the set of all $\sigma\in \Aut(T_n)$
for which the following condition holds:
for every $m\geq 0$, we have
\begin{equation}
\label{eq:Mncong}
P(\sigma,x) \equiv P(\sigma,x_0) \pmod{2^j}
\quad\text{for every node } x\in\{a,b\}^m ,
\end{equation}
where $j:= \lfloor (n-m+1)/2 \rfloor$.
\end{defin}

In light of Definition~\ref{def:Mn}, for $\sigma\in M_{\infty}$ or $\sigma\in M_n$
we may write simply $P(\sigma)$ instead of $P(\sigma,x)$,
since the value is independent of the node. That is,
we have functions $P:M_{\infty}\to \ZZ_2^{\times}$
and $P:M_n \to (\ZZ/2^j\ZZ)^{\times}$, where $j:=\lfloor (n+1)/2 \rfloor$.

\begin{thm}
\label{thm:Mgroup}
%Suppose $x_0\in K$ with $x_0\neq 0,-1$. Then:
\hspace{2mm}
\begin{enumerate}
\item
$M_{\infty}$ is a subgroup of $\Aut(T_{\infty})$.
\item For every $n\geq 1$, $M_n$ is a subgroup of $\Aut(T_n)$.
\item
The map $P:M_{\infty}\to\ZZ_2^{\times}$ is a group homomorphism.
%given by $P:\sigma\mapsto P(\sigma,x_0)$
\item For every $n\geq 1$, the map $P:M_{n}\to(\ZZ/2^j\ZZ)^{\times}$
%given by $P:\sigma\mapsto P(\sigma,x_0)$,
where $j:=\lfloor (n+1)/2 \rfloor$, is a group homomorphism.
\end{enumerate}
\end{thm}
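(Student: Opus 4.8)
The plan is to reduce all four assertions to one structural property of $P$ under composition, which I will call its \emph{semilinearity}: for every $\tau\in\Aut(T_\infty)$ and every node $x$ there exist coefficients $\lambda^{\tau,x}_{z}\in\ZZ_2$, indexed by the nodes $z$ of $T_\infty$ and $2$-adically summable, \emph{depending only on $\tau$ and $x$}, such that
\begin{equation}
\label{eq:semilin}
P(\sigma\tau,x)=\sum_{z}\lambda^{\tau,x}_{z}\,P(\sigma,z)\qquad\text{for every }\sigma\in\Aut(T_\infty),
\end{equation}
together with the obvious congruential analogue of \eqref{eq:semilin} on $T_n$. Evaluating \eqref{eq:semilin} at $\sigma=\mathrm{id}$, where $\Par(\mathrm{id},\cdot)\equiv 0$ and hence $P(\mathrm{id},\cdot)\equiv 1$, gives $\sum_z\lambda^{\tau,x}_z=P(\tau,x)$. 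Granting \eqref{eq:semilin}, statements (1) and (3) follow formally. The identity automorphism lies in $M_\infty$ since $P(\mathrm{id},\cdot)\equiv 1$. If $\sigma,\tau\in M_\infty$ with $P(\sigma,\cdot)\equiv c$ and $P(\tau,\cdot)\equiv d$, then \eqref{eq:semilin} gives $P(\sigma\tau,x)=c\sum_z\lambda^{\tau,x}_z=c\,P(\tau,x)=cd$ for \emph{every} node $x$, so $\sigma\tau\in M_\infty$ and $P(\sigma\tau)=P(\sigma)P(\tau)$; applying \eqref{eq:semilin} with $\tau$ replaced by $\sigma^{-1}$ and using $P(\sigma\sigma^{-1},x)=P(\mathrm{id},x)=1$ gives $1=c\,P(\sigma^{-1},x)$ for all $x$, so $P(\sigma^{-1},\cdot)\equiv c^{-1}$ and $\sigma^{-1}\in M_\infty$. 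For statements (2) and (4) one first records the congruential form of \eqref{eq:semilin}, namely $P(\sigma\tau,x)\equiv\sum_z\lambda^{\tau,x}_z P(\sigma,z)\pmod{2^{\lfloor(n-m+1)/2\rfloor}}$ for a node $x$ at level $m$, which follows from \eqref{eq:semilin} by lifting $\sigma,\tau\in\Aut(T_n)$ to $\Aut(T_\infty)$ and truncating (the remark following Definition~\ref{def:Pmap}); one then repeats the formal argument above, keeping track of the level-dependent moduli in \eqref{eq:Mncong} and of the fact that every $\tau\in\Aut(T_n)$ preserves levels.

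So everything comes down to \eqref{eq:semilin}. Its starting point is the exact parity cocycle
\[ \Par(\sigma\tau,w)=\Par(\tau,w)+\bigl(1-2\,\Par(\tau,w)\bigr)\,\Par(\sigma,\tau(w)), \]
a one-line check from \eqref{eq:sgndef}. Substituting it into \eqref{eq:Qdef} and \eqref{eq:Pdef} shows at once that, for fixed $\tau$ and $x$, the quantity $P(\sigma\tau,x)$ is a $\ZZ_2$-affine function of the family of parities $\bigl(\Par(\sigma,\tau(w))\bigr)_w$, hence of $\bigl(\Par(\sigma,z)\bigr)_z$; the actual content of \eqref{eq:semilin} is that, after re-expressing each $\Par(\sigma,z)$ through the identity $P(\sigma,z)=1-2\,\Par(\sigma,z)+2\sum_{t\in\{a,b\}}Q(\sigma,zbt)-2\sum_{t\in\{a,b\}}Q(\sigma,zat)$ and the recursion \eqref{eq:Qnext} (a triangular system running up the tree), the outcome is genuinely \emph{linear} in the $P(\sigma,z)$, with no constant term, and with coefficients that do not involve $\sigma$. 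A clean way to organize this is to reduce to the case $\tau=\sigma_y$, the elementary transposition with $\Par(\sigma_y,y)=1$ and $\Par(\sigma_y,\cdot)=0$ elsewhere: these topologically generate $\Aut(T_\infty)$, and \eqref{eq:semilin} for $\tau=\tau_1$ and for $\tau=\tau_2$ formally implies it for $\tau_1\tau_2$ (one takes $\lambda^{\tau_1\tau_2,x}_{z'}=\sum_z\lambda^{\tau_2,x}_z\lambda^{\tau_1,z}_{z'}$), so by continuity of $P$ it suffices to treat $\tau=\sigma_y$. For that $\tau$, a direct computation with \eqref{eq:Qnext} yields the clean recursion $Q(\sigma\sigma_y,z)=Q\bigl(\sigma,\sigma_y(z)\bigr)$ plus a correction term $2^{d}P(\sigma,y)$ that is present precisely when $y$ lies in the ``$a$-then-branch'' subtree above $z$, where $d$ is its depth there; feeding this into \eqref{eq:Pdef} gives $P(\sigma\sigma_y,x)=P\bigl(\sigma,\sigma_y(x)\bigr)+\mu\cdot P(\sigma,y)$ for an integer $\mu=\mu(\sigma_y,x)$ (a signed power of $2$), which is \eqref{eq:semilin} with $\lambda^{\sigma_y,x}_{\sigma_y(x)}=1$, $\lambda^{\sigma_y,x}_{y}=\mu$, and the rest zero --- and indeed $1+\mu=P(\sigma_y,x)$, matching $\sum_z\lambda^{\sigma_y,x}_z=P(\sigma_y,x)$.

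The main obstacle is precisely this computation and the bookkeeping behind it: $Q(\sigma\sigma_y,z)$ is \emph{not} a simple function of $Q(\sigma,z)$, because $\sigma_y$ permutes the tree but not the special ``$a$-then-branch'' shape of the nodes summed over in \eqref{eq:Qdef}, so the contributions $\sigma_y$ displaces onto nodes of the wrong shape must be identified and recombined --- it is exactly here, via \eqref{eq:Qnext}, that the extra term $2^{d}P(\sigma,y)$ appears. Establishing the recursion above requires separating cases according to the mutual position of $y$, $z$, $za$, $zb$ (whether $y$ is above $z$ in the relevant subtree, strictly above $z$ but off that shape, below $z$, or incomparable to $z$, and whether $\sigma_y$ fixes or moves $z$) and checking that each case delivers the claimed form; this is routine but lengthy, and is where essentially all the work lies. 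Note that the naive guess $P(\sigma\tau,x)=P(\sigma,\tau(x))P(\tau,x)$ is \emph{false} for general $\sigma,\tau\in\Aut(T_\infty)$ --- the two sides already differ for $\tau=\sigma_y$ unless $P(\sigma,x)=P(\sigma,y)$ --- which is why the linear-functional form \eqref{eq:semilin}, rather than a plain cocycle identity, is the right statement; it collapses to the multiplicativity $P(\sigma\tau)=P(\sigma)P(\tau)$ only on $M_\infty$, which is exactly what is needed. Statements (2) and (4) add nothing conceptually new beyond the routine verification that every truncation above is compatible with the moduli $2^{\lfloor(n-m+1)/2\rfloor}$ of \eqref{eq:Mncong}.
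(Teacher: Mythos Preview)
Your approach is sound in outline and would yield a proof, but it takes a substantially longer route than the paper's. Both arguments begin with the parity cocycle $\Par(\sigma\tau,w)=\Par(\sigma,\tau(w))+\sgn(\sigma,\tau(w))\Par(\tau,w)$, and both ultimately arrive at the identity $P(\sigma\tau,x)=P(\sigma)\,P(\tau,x)$ for $\sigma\in M_\infty$ and arbitrary $\tau\in\Aut(T_\infty)$, from which (1)--(4) follow formally just as you describe. The difference is in how that identity is reached.

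You aim for the stronger semilinearity \eqref{eq:semilin}, valid for \emph{all} $\sigma$, by reducing to elementary transpositions $\sigma_y$, carrying out a case analysis on the position of $y$ relative to $x$ and to the special node-shape in \eqref{eq:Qdef}, and then passing to general $\tau$ by composition and continuity. Your sketch checks out (including the key observation that the correction term is a multiple of $P(\sigma,y)$ rather than of individual parities), but the casework you defer as ``routine but lengthy'' really is the bulk of the argument, and your claim that $\mu$ is ``a signed power of $2$'' needs the small amendment that $\mu=0$ also occurs (e.g.\ when $y$ lies above $x$ at an odd relative level but off the special shape).

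The paper bypasses all of this. Assuming $\sigma\in M_\infty$ \emph{from the outset}, it introduces the auxiliary function $Z_{\sigma,\tau}(x):=Q(\sigma,\tau(x))+P(\sigma)Q(\tau,x)-Q(\sigma\tau,x)$ and shows by a direct expansion (using the cocycle, the recursion \eqref{eq:Qnext}, and --- crucially --- the freedom to write $P(\sigma)=P(\sigma,\tau(x))$ at any node) that $Z_{\sigma,\tau}(x)=2Z_{\sigma,\tau}(xaa)+2Z_{\sigma,\tau}(xab)$. A one-line $2$-adic induction then forces $Z_{\sigma,\tau}\equiv 0$, and this unwinds to $P(\sigma)P(\tau,x)=P(\sigma\tau,x)$ with no reduction to generators, no case analysis, and no limit argument. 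In effect, the paper proves that the ``naive cocycle'' $P(\sigma\tau,x)=P(\sigma,\tau(x))P(\tau,x)$, which you correctly flag as false in general, \emph{is} true whenever $\sigma\in M_\infty$ --- and that is all the theorem needs. Your semilinearity is a genuinely more general structural statement about $P$ on all of $\Aut(T_\infty)$, possibly of independent interest, but for the present theorem it is more than required, and the paper's recursion-plus-$2$-adic-density argument is both shorter and complete as written.
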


\begin{proof}
We prove statements (1) and (3); the proofs of statements (2) and (4) are similar.
Our argument has five steps. In Step~1, we derive some simple formulas to be used
later in the proof. In Step~2, we define a function $Z_{\sigma,\tau}$ on the nodes
of the tree and prove that it satisfies a certain functional equation.
In Step~3, we apply this functional equation to show that $Z_{\sigma,\tau}$
is in fact identically zero, yielding the key identity~\eqref{eq:Qident}.
In Step~4, we use~\eqref{eq:Qident} to deduce identity~\eqref{eq:Phom},
which \emph{a posteriori} says that $P$ is a homomorphism. Finally, in Step~5 we use
identity~\eqref{eq:Phom} to show that $M_{\infty}$ is a subgroup of $\Aut(T_{\infty})$.

\medskip

\textbf{Step 1}. For any $\sigma\in\Aut(T_{\infty})$ and any node $x$ of $T_{\infty}$,
define
\begin{equation}
\label{eq:sgn1def}
\sgn(\sigma,x) := (-1)^{\Par(\sigma,x)} = 1-2\Par(\sigma,x).
\end{equation}
It is immediate from equation~\eqref{eq:sgndef} that
for any $\sigma,\tau\in\Aut(T_{\infty})$ and any node $x$ of $T_{\infty}$,
we have
\begin{equation}
\label{eq:sgn1}
\sgn(\sigma \tau, x) = \sgn\big(\sigma, \tau(x) \big) \cdot \sgn(\tau, x) .
\end{equation}
We also have
\begin{equation}
\label{eq:sgn2}
\Par(\sigma\tau,x) = \Par\big(\sigma, \tau(x)\big) + \sgn\big(\sigma,\tau(x)\big) \Par(\tau,x) .
\end{equation}
Equation~\eqref{eq:sgn2} follows from equation~\eqref{eq:sgn1} by writing
$\Par(\cdot,\cdot)=(1-\sgn(\cdot,\cdot))/2$, or simply by checking the four
possible choices of $\Par(\tau,x)$ and $\Par(\sigma,\tau(x))$.

\medskip

\textbf{Step 2}. For any $\sigma\in M_{\infty}$,
any $\tau\in\Aut(T_{\infty})$, and any node $x$ of $T_{\infty}$,
define
\[ Z_{\sigma,\tau}(x) :=
Q\big(\sigma, \tau(x) \big) + P(\sigma) Q(\tau,x) - Q(\sigma\tau,x),\]
where $P(\sigma)$ is the constant value of $P(\sigma,w)$ for all nodes $w$ of $T_{\infty}$.
In Step~3 we will show that $Z_{\sigma,\tau}$ is identically zero,
but for now we make the weaker claim that
%We claim that
\[ Z_{\sigma,\tau}(x) = 2Z_{\sigma,\tau}(xaa) + 2Z_{\sigma,\tau}(xab). \]
To see this, expand each appearance of $Q$ in the definition of
$Z_{\sigma,\tau}(x)$ according to equation~\eqref{eq:Qnext}, yielding
\begin{align}
Z_{\sigma,\tau}(x)
&= \Par\big(\sigma,\tau(x)\big) + P(\sigma) \Par(\tau, x)  - \Par(\sigma\tau,x)
\notag \\
&\qquad+ 2\sum_{s\in\{a,b\}} \Big[
Q\big(\sigma,\tau(x)as\big) + P(\sigma) Q(\tau,xas) - Q(\sigma\tau,xas) \Big]
\notag \\
&= \Par\big(\sigma,\tau(x)\big) + \sgn\big(\sigma,\tau(x)\big) \Par(\tau, x)  - \Par(\sigma\tau,x)
\label{eq:ZZ1} \\
&\qquad + 2\Par(\tau,x) \sum_{s\in\{a,b\}}
\Big[ Q\big(\sigma,\tau(x)bs\big) - Q\big(\sigma,\tau(x)as\big) \Big]
\label{eq:ZZ2} \\
&\qquad+ 2\sum_{s\in\{a,b\}} \Big[
Q\big(\sigma,\tau(x)as\big) + P(\sigma) Q(\tau,xas) - Q(\sigma\tau,xas) \Big] ,
\label{eq:ZZ3}
\end{align}
where in the second equality, we expanded the first appearance of $P(\sigma)$
as $P(\sigma,\tau(x))$. The expression on line~\eqref{eq:ZZ1} is zero
by equation~\eqref{eq:sgn2}. Next, observe that
\begin{equation}
\label{eq:tausign}
\big\{ \tau(x)aa, \tau(x)ab \big\} = 
\begin{cases}
\dsps \big\{ \tau(xaa), \tau(xab) \big\} & \text{ if } \Par(\tau,x)=0,
\\
\dsps \big\{ \tau(xba), \tau(xbb) \big\} & \text{ if } \Par(\tau,x)=1,
\end{cases}
\end{equation}
and similarly for the set $\{ \tau(x)ba, \tau(x)bb \}$.
Thus, the expression on line~\eqref{eq:ZZ2} is
\[ \begin{cases}
\dsps 0 & \text{ if } \Par(\tau,x)=0,
\\
\dsps 2 \sum_{s\in\{a,b\}}
\Big[ Q\big(\sigma,\tau(xas)\big) - Q\big(\sigma,\tau(xbs)\big) \Big]
& \text{ if } \Par(\tau,x)=1.
\end{cases} \]
and the expression on line~\eqref{eq:ZZ3} is
\[ \begin{cases}
\dsps 2\sum_{s\in\{a,b\}} \Big[
Q\big(\sigma,\tau(xas)\big) + P(\sigma) Q(\tau,xas) - Q(\sigma\tau,xas) \Big] ,
& \text{ if } \Par(\tau,x)=0,
\\
\dsps 2\sum_{s\in\{a,b\}} \Big[
Q\big(\sigma,\tau(xbs)\big) + P(\sigma) Q(\tau,xas) - Q(\sigma\tau,xas) \Big] ,
& \text{ if } \Par(\tau,x)=1.
\end{cases} \]
For either possible value of $\Par(\tau,x)$, then, we have
\begin{align*}
Z_{\sigma,\tau}(x)
&= 2\sum_{s\in\{a,b\}} \Big[
Q\big(\sigma,\tau(xas)\big) + P(\sigma) Q(\tau,xas) - Q(\sigma\tau,xas) \Big] 
\\
&= 2Z_{\sigma,\tau}(xaa) + 2Z_{\sigma,\tau}(xab),
\end{align*}
proving our claim.

\medskip

\textbf{Step 3}.
As in Step~2, consider $\sigma\in M_{\infty}$ and $\tau\in\Aut(T_{\infty})$.
We claim that $Z_{\sigma,\tau}$ is identically zero; that is, we claim
\begin{equation}
\label{eq:Qident}
Q\big(\sigma, \tau(x) \big) + P(\sigma) Q(\tau,x) = Q(\sigma\tau,x)
\end{equation}
for every node $x$ of $T_{\infty}$.
%That is, we are claiming that $Z_{\sigma,\tau}(x)=0$ for every $x$.
To prove this, it suffices to show that for every node $x$ and each $j\geq 0$,
we have $Z_{\sigma,\tau}(x)\in 2^j \ZZ_2$.

We proceed by induction on $j$.
The base case $j=0$ is immediate from the fact that
$P(\cdot,\cdot), Q(\cdot,\cdot)\in\ZZ_2$.
Assuming the statement holds for all $x$
for some particular $j\geq 0$, then for any node $x$,
Step~2 yields
\[  Z_{\sigma,\tau}(x)
= 2\big(Z_{\sigma,\tau}(xaa) + Z_{\sigma,\tau}(xab)\big)
\in 2\big( 2^j \ZZ_2 \big) = 2^{j+1} \ZZ_2, \]
completing the induction and proving our claim.

\medskip

\textbf{Step 4}.
As in the previous two steps, consider $\sigma\in M_{\infty}$ and $\tau\in\Aut(T_{\infty})$,
and consider a node $x$ in $T_{\infty}$. We claim that
\begin{equation}
\label{eq:Phom}
P(\sigma) P(\tau, x) = P(\sigma\tau, x).
\end{equation}
Indeed, expanding $P(\tau,x)$ yields
\begin{align*}
P(\sigma) P(\tau, x) &=
\sgn(\tau,x)P(\sigma) + 2 \sum_{t\in \{a,b\}} P(\sigma)Q(\tau,xbt)
- 2 \sum_{t\in \{a,b\}} P(\sigma)Q(\tau,xat)
\\
&= \sgn(\tau,x) \bigg[ \sgn\big(\sigma,\tau(x)\big) 
+ 2 \sum_{t\in \{a,b\}} Q\big(\sigma,\tau(x)bt\big)
- 2 \sum_{t\in \{a,b\}} Q\big(\sigma,\tau(x)at\big) \bigg]
\\
& \qquad
+ 2 \sum_{t\in \{a,b\}} P(\sigma)Q(\tau,xbt)
- 2 \sum_{t\in \{a,b\}} P(\sigma)Q(\tau,xat),
\end{align*}
where we have also expanded the first appearance of $P(\sigma)$
as $P(\sigma,\tau(x))$. Applying equations~\eqref{eq:sgn1} and~\eqref{eq:tausign}, then, we have
\begin{align*}
P(\sigma) P(\tau, x) &=
\sgn(\sigma\tau,x)
+ 2 \sum_{t\in \{a,b\}} \Big[ Q\big(\sigma,\tau(xbt)\big) + P(\sigma)Q(\tau,xbt) \Big]
\\
& \qquad
- 2 \sum_{t\in \{a,b\}} \Big[ Q\big(\sigma,\tau(xat)\big) + P(\sigma)Q(\tau,xat) \Big]
\\
&= \sgn(\sigma\tau,x)
+ 2\sum_{t\in \{a,b\}} Q(\sigma \tau, xbt) - 2\sum_{t\in \{a,b\}} Q(\sigma \tau, xat)
= P(\sigma \tau,x),
\end{align*}
where we used identity~\eqref{eq:Qident} twice in the second equality, thus proving our claim.

\medskip

\textbf{Step 5}.
To prove statement~(1), first observe that the identity automorphism $e\in\Aut(T_{\infty})$
belongs to $M_{\infty}$, since $\Par(e,y)=0$ for all nodes $y$, and hence
$P(e,x)=1$ for all nodes $x$ of $T_{\infty}$.
Next, given $\sigma,\tau\in M_{\infty}$, it follows from identity~\eqref{eq:Phom}
that for any node $x$ of $T_{\infty}$, we have
\[ P(\sigma\tau,x) = P(\sigma)P(\tau,x)
= P(\sigma)P(\tau,x_0) = P(\sigma\tau,x_0), \]
and hence $\sigma\tau\in M_{\infty}$.
Finally, given $\sigma\in M_{\infty}$, consider $\sigma^{-1}\in\Aut(T_{\infty})$.
Then for any node $x$ of $T_{\infty}$, identity~\eqref{eq:Phom} again yields
\[ 1 = P(e,x) = P(\sigma\sigma^{-1},x) = P(\sigma)P(\sigma^{-1},x). \]
Thus,
\[ P\big(\sigma^{-1},x\big)=P(\sigma)^{-1} = P\big(\sigma^{-1},x_0\big), \]
and therefore $\sigma^{-1}\in M_{\infty}$.
That is, $M_{\infty}$ is indeed a subgroup of $\Aut(T_{\infty})$.

Finally, the fact that $P:M_{\infty}\to\ZZ_2^{\times}$ is a homomorphism
is immediate from identity~\eqref{eq:Phom},
proving statement~(2).
\end{proof}

Since $M_{\infty}$ is indeed a group by Theorem~\ref{thm:Mgroup},
the following more precise version of statement~(1) of our Main Theorem
is an immediate consequence of Theorem~\ref{thm:Pembed}.

\begin{cor}
\label{cor:Pbasil}
Fix notation and a tree labeling as in Theorem~\ref{thm:Pembed}. Consider
%With a choice of primitive $2$-power roots of unity in $K_{\infty}$
%and associated labeling of $T_{\infty}$ as in Lemma~\ref{lem:pickzeta}, consider
the embedding of $G_{\infty}$ in $\Aut(T_{\infty})$
induced by its action on $\Orb_f^-(x_0)$.
Then the image of this embedding is contained in
the arithmetic basilica group $M_{\infty}$.
\end{cor}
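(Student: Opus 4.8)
The plan is to show directly that every $\sigma\in G_{\infty}$, regarded via its action on $\Orb_f^-(x_0)$ as an element of $\Aut(T_{\infty})$ (using the labeling fixed in Theorem~\ref{thm:Pembed}), satisfies the defining condition of $M_{\infty}$ from Definition~\ref{def:Mn}: namely, $P(\sigma,x)=P(\sigma,x_0)$ for every node $x$ of $T_{\infty}$. The one substantive ingredient is Theorem~\ref{thm:Pembed}, which computes the action of $\sigma$ on \emph{all} $2$-power roots of unity in $K_{\infty}$ in terms of $P(\sigma,\cdot)$; everything else is bookkeeping.

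First I would fix $\sigma\in G_{\infty}$ and two arbitrary nodes $x,x'$ of $T_{\infty}$. For each $m\geq 1$, Lemma~\ref{lem:nrel} guarantees that $K_{\infty}$ contains a primitive $2^{m}$-th root of unity, and we have fixed a compatible sequence $\{\zeta_{2^{m}}\}$ with $(\zeta_{2^m})^2=\zeta_{2^{m-1}}$ as in the hypotheses of Theorem~\ref{thm:Pembed}. Applying that theorem to the node $x$ and to the node $x'$ gives
\[ \zeta_{2^{m}}^{P(\sigma,x)} \;=\; \sigma(\zeta_{2^{m}}) \;=\; \zeta_{2^{m}}^{P(\sigma,x')}. \]
Since $\zeta_{2^{m}}$ has exact multiplicative order $2^{m}$, this forces $P(\sigma,x)\equiv P(\sigma,x')\pmod{2^{m}}$.

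As the congruence $P(\sigma,x)\equiv P(\sigma,x')\pmod{2^{m}}$ holds for every $m\geq 1$, and $\ZZ_2=\varprojlim_m \ZZ/2^{m}\ZZ$, it follows that $P(\sigma,x)=P(\sigma,x')$ in $\ZZ_2^{\times}$. Taking $x'=x_0$ shows that $P(\sigma,x)=P(\sigma,x_0)$ for every node $x$, i.e., $\sigma\in M_{\infty}$ by Definition~\ref{def:Mn}. Since $\sigma\in G_{\infty}$ was arbitrary, the image of the embedding $G_{\infty}\hookrightarrow\Aut(T_{\infty})$ is contained in $M_{\infty}$, which is the assertion of the corollary.

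I do not expect any real obstacle here: once Theorems~\ref{thm:Pembed} and~\ref{thm:Mgroup} are available, the statement is a formal consequence. The only points needing a little care are that the labeling used throughout is the one produced by Lemma~\ref{lem:pickzeta} (so that $P(\sigma,x)$ really is the exponent governing the action of $\sigma$ on $\zeta_{2^m}$), and that $K_{\infty}$ genuinely contains every $2$-power root of unity, which is precisely the content of Lemma~\ref{lem:nrel}.
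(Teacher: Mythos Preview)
Your argument is correct and is exactly the reasoning the paper has in mind: the paper merely states that the corollary is ``an immediate consequence of Theorem~\ref{thm:Pembed},'' and you have spelled out precisely that deduction, namely that $\sigma(\zeta_{2^m})=\zeta_{2^m}^{P(\sigma,x)}$ for every node $x$ forces all the $P(\sigma,x)$ to agree modulo every $2^m$ and hence in $\ZZ_2^{\times}$. The mention of Theorem~\ref{thm:Mgroup} is not strictly needed for the containment claim itself, but otherwise there is nothing to add.
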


\section{The basilica group and finite subtrees}
\label{sec:Basil}

For any $0\leq m\leq n\leq \infty$, define a function $R_{n,m}:\Aut(T_n)\to\Aut(T_m)$
by restricting $\sigma\in\Aut(T_n)$ to the subtree $T_m$.
Clearly $R_{n,m}$ is a homomorphism.

The group $\Aut(T_\infty)$ has a topological structure, as follows. For each $m\geq 0$,
let $U_m:=\ker(R_{\infty,m})$.
That is, $U_m$ consists of all $\sigma\in\Aut(T_{\infty})$
that act trivially on levels $0$ through $m$ of the tree.
The cosets of the normal subgroups $U_m$ form a basis for a topology
on $\Aut(T_{\infty})$, making $\Aut(T_{\infty})$ compact and Hausdorff.
For $n\geq m\geq 0$,
we will often abuse notation and write $U_m$ for the subgroup
$R_{\infty,n}(U_m) = \ker(R_{n,m})$ of $\Aut(T_n)$.

For any node $x$ of $T_{\infty}$, it is immediate from Definition~\ref{def:Pmap}
that $\sigma\mapsto P(\sigma,x)$ is a continuous function from
$\Aut(T_\infty)$ to $\ZZ_2^{\times}$. It follows that $M_{\infty}$
is a closed and hence compact subgroup of $\Aut(T_{\infty})$.

Fix a labeling of the tree $T_{\infty}$.
Define two particular automorphisms $\mapa,\mapb \in\Aut(T_{\infty})$ by specifying that
\begin{itemize}
\item $\Par(\mapa,bb\cdots b) = 1$ for any node whose label is a string
of an even number of $b$'s,
\item $\Par(\mapa,y)=0$ for all other nodes $y$ of $T_{\infty}$,
\end{itemize}
and
\begin{itemize}
\item $\Par(\mapb,bb\cdots b) = 1$ for any node whose label is a string 
of an odd number of $b$'s,
\item $\Par(\mapb,y)=0$ for all other nodes $y$ of $T_{\infty}$.
\end{itemize}
The maps $\mapa$ and $\mapb$ can be equivalently defined by the recursive relations
\[ \mapa(aw) = b w, \quad \mapa(bw)= a\mapb(w), \qquad
\mapb(aw) = aw, \quad \mapb(bw)=b \mapa(w) \]
for any word $w$ in the symbols $a,b$.

\begin{defin}
\label{def:Bgroup}
The \emph{basilica group} is the subgroup $B_{\infty}$ of $\Aut(T_{\infty})$
generated by $\mapa$ and $\mapb$.
The \emph{closed basilica group} is the topological closure $\overline{B}_{\infty}$
of $B_{\infty}$ in $\Aut(T_{\infty})$.
\end{defin}

\begin{remark}
\label{rem:splitB}
Consider $\sigma\in U_1$, i.e., $\sigma\in\Aut(T_\infty)$ fixing level $1$
of the tree.
Then $\sigma$ acts on the subtree $T_{\infty,a}$ rooted at $a$
as some automorphism $\sigma_a\in\Aut(T_{\infty})$,
and similarly on the subtree $T_{\infty,b}$ rooted at $b$
as some $\sigma_b\in\Aut(T_{\infty})$.
That is, we may write $\sigma=(\sigma_a,\sigma_b)$.

In this notation, we have $\mapb=(e,\mapa)$, where $e$ is the identity element
of $\Aut(T_{\infty})$. Similarly, 
$\mapa^{-1}\mapb\mapa$ and $\mapa^2$ also belong to $U_1$,
and simple computations show that
$\mapa^{-1}\mapb\mapa = (\mapa, e)$ and $\mapa^2=(\mapb,\mapb)$.

Consider $\sigma\in B_{\infty}\cap U_1$. Then $\sigma$ must be a
finite product of $\mapa$ and $\mapb$ involving an even number of
copies of $\mapa$. (The parity condition on $\mapa$ is because $\sigma\in U_1$).
Any such product can also be written as a product of powers of
$\mapa^2$, $\mapb$, and $\mapa^{-1}\mapb\mapa$.
Thus, writing $\sigma=(\sigma_a,\sigma_b)$, we must have
$\sigma_a,\sigma_b\in B_{\infty}$.
Conversely, for any $\sigma_b\in B_{\infty}$, there is
some $\sigma_a\in B_{\infty}$ such that $(\sigma_a,\sigma_b)\in B_{\infty}\cap U_1$.
For this reason, the basilica group $B_{\infty}$ is said to be a \emph{self-similar group}.
See \cite{Nek} for more on self-similar groups,
especially Sections~3.10.2, 5.2.2, and 6.12.1, which specifically concern $B_{\infty}$.
\end{remark}

\begin{defin}
\label{def:BnEn}
Fix $n\geq 1$ and a labeling of $T_\infty$. Define
\begin{enumerate}
\item $B_n:=R_{\infty,n}(B_{\infty})=R_{\infty,n}(\overline{B}_{\infty})$.
\item $B'_n:=\ker(P:M_n\to(\ZZ/2^j\ZZ)^{\times})$,
where $j:=\lfloor (n+1)/2\rfloor$.
\item $E_n:=U_{n-1} \cap B'_n$.
\end{enumerate}
\end{defin}

Recall from Theorem~\ref{thm:Mgroup} that the map $P$ used to
define $B'_n$ above is indeed a homomorphism, so that both
$B_n$ and $B'_n$ are subgroups of $\Aut(T_n)$.
Moreover, a simple computation shows $P(\mapa,x)=P(\mapb,x)=1$
for every node $x$ of the tree $T_{\infty}$. Thus, we have $B_n\subseteq B'_n$.
In fact, these two groups coincide, as we will show in Theorem~\ref{thm:Bndef}.

To that end, consider the group
$E_{n-1}\times E_{n-1}$ acting on the tree $T_n$, where the first copy of $E_{n-1}$
acts on the copy of $T_{n-1}$ rooted at node $a$, and the second acts on the copy
of $T_{n-1}$ rooted at $b$. We have the following result.

\begin{lemma}
\label{lem:inductEn}
For every $n\geq 2$, $E_n$ is a subgroup of $E_{n-1}\times E_{n-1}$, and
\[ [E_{n-1}\times E_{n-1} : E_n ] = \begin{cases}
1 & \text{ if $n$ is even}, \\
2 & \text{ if $n$ is odd}.
\end{cases} \]
\end{lemma}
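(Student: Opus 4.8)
The plan is to analyze the natural "coordinate" map $U_1 \cap \Aut(T_n) \to \Aut(T_{n-1}) \times \Aut(T_{n-1})$ sending $\sigma$ to $(\sigma_a, \sigma_b)$, and to show that, restricted to $E_n$, it lands in $E_{n-1} \times E_{n-1}$ with the stated index. First I would observe that $E_n \subseteq U_{n-1} \subseteq U_1$, so every $\sigma \in E_n$ fixes level $1$ and hence decomposes as $\sigma = (\sigma_a, \sigma_b)$. Since $\sigma$ acts trivially on levels $0$ through $n-1$ of $T_n$, each $\sigma_s$ acts trivially on levels $0$ through $n-2$ of $T_{n-1}$; that is, $\sigma_a, \sigma_b \in U_{n-2}(T_{n-1})$. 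It remains to check the two conditions defining $E_{n-1}$ for each $\sigma_s$: membership in $M_{n-1}$ and vanishing of $P$.

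The key computational input is the recursion for $P$ coming from equations~\eqref{eq:Qnext} and~\eqref{eq:Pdef}, together with the fact that for $\sigma \in U_1$ the parity data $\Par(\sigma, sw)$ at a node $sw$ with $s \in \{a,b\}$ equals $\Par(\sigma_s, w)$. I would work out how $Q(\sigma, x)$ and $P(\sigma, x)$ at a node $x = sw$ in $T_n$ relate to $Q(\sigma_s, w)$ and $P(\sigma_s, w)$ in $T_{n-1}$ — the index shift by one level changes the truncation parameter $j = \lfloor(n-m+1)/2\rfloor$ in a controlled way. Since the defining sums for $Q(\sigma, sw)$ only involve nodes of the form $sw\cdots$ lying in the $a$- or $b$-subtree according to $s$, one gets directly that $P(\sigma, sw) \equiv P(\sigma_s, w) \pmod{2^{j}}$ for the appropriate $j$, and similarly that the constancy condition~\eqref{eq:Mncong} for $\sigma$ on $T_n$ forces the corresponding constancy condition for $\sigma_s$ on $T_{n-1}$, i.e. $\sigma_s \in M_{n-1}$. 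Because $\Par(\sigma, \emptyset) = 0$ (as $\sigma \in U_1$), the leading term of $P(\sigma, x_0)$ is $+1$; combined with $\sigma \in U_{n-1}$ this pins down $P(\sigma)$ modulo a high power of $2$, and pushing this down one level shows $P(\sigma_s)$ is trivial in $(\ZZ/2^{j'}\ZZ)^\times$ with $j' = \lfloor n/2 \rfloor$, the parameter for $E_{n-1}$. Hence $\sigma_s \in U_{n-2} \cap B'_{n-1} = E_{n-1}$, giving $E_n \subseteq E_{n-1} \times E_{n-1}$.

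For the index computation I would identify exactly which pairs $(\sigma_a, \sigma_b) \in E_{n-1} \times E_{n-1}$ arise. Freely prescribing $\sigma_a$ and $\sigma_b$ determines a unique $\sigma \in U_1 \cap \Aut(T_n)$ with trivial level-$1$ action, and $\sigma$ automatically lies in $U_{n-1}$. The only possible obstruction to $\sigma \in E_n = U_{n-1} \cap B'_n$ is that $P(\sigma, x_0)$, which lives in $(\ZZ/2^{j}\ZZ)^\times$ with $j = \lfloor(n+1)/2\rfloor$, and which equals $1 + 2\sum_{t} Q(\sigma, bt) - 2\sum_t Q(\sigma, at)$, might be nontrivial. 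When $n$ is even, $j = \lfloor(n+1)/2\rfloor = n/2$ coincides with the parameter $j'=\lfloor n/2\rfloor$ already controlling $E_{n-1}$, and the triviality of $P(\sigma_a), P(\sigma_b)$ forces $P(\sigma) = 1$ with no further condition, so $E_n = E_{n-1} \times E_{n-1}$. When $n$ is odd, $j = (n+1)/2 = j' + 1$, so there is one extra $2$-adic digit of $P(\sigma, x_0)$ not controlled by the subtree data; this digit is a fixed $\ZZ/2\ZZ$-valued linear functional of the top-level parities of $\sigma$ (equivalently, of the pair $(\sigma_a,\sigma_b)$ one level down), and requiring it to vanish cuts the index down by exactly $2$. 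I would also need to check that this functional is genuinely nonzero on $E_{n-1}\times E_{n-1}$ — i.e. that there actually exists a pair $(\sigma_a,\sigma_b)$ violating it — which follows by exhibiting a single suitable element (for instance one supported at a single well-chosen node of level $n-1$, whose contribution to the relevant $Q$-sum is a unit times $2^{j-1}$).

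The main obstacle I anticipate is bookkeeping: tracking the truncation parameters $j$ and the precise $2$-adic valuations of the terms in the $Q$- and $P$-sums as one passes between $T_n$ and its depth-$(n-1)$ subtrees, and making sure the "extra digit" in the odd case is correctly isolated and shown to be a nonzero functional rather than identically zero or identically one. An induction-free, direct computation at the top level should suffice, using that all $\sigma \in E_n$ are trivial except possibly at level $n$ so that only the leading $\pm 1$ and the single surviving sum $2\sum_t(Q(\sigma,bt)-Q(\sigma,at))$ in~\eqref{eq:Pdef} can be nontrivial modulo $2^j$.
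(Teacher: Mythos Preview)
Your overall strategy matches the paper's: both arguments show $E_n \subseteq E_{n-1}\times E_{n-1}$ by comparing the defining congruences, identify the single extra condition $P(\sigma,x_0)\equiv 1\pmod{2^j}$ with $j=\lfloor(n+1)/2\rfloor$ as the only obstruction, observe it is vacuous for even $n$, and for odd $n$ reduce to exhibiting an element of $E_{n-1}\times E_{n-1}$ on which this condition fails.

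There is, however, a genuine gap in your odd-$n$ witness. An automorphism supported at a single node $y$ of level $n-1$ does \emph{not} lie in $E_{n-1}\times E_{n-1}$ once $n\geq 5$. Say $y=by'$; then $\sigma_b$ is supported at the single node $y'$ at level $n-2$ of $T_{n-1}$. Writing $w_2$ for the node two levels below $y'$, one computes $P(\sigma_b,w_2)=1\pm 2\equiv 3\pmod 4$, whereas $P(\sigma_b,\emptyset)=1$ (since $n-2$ is odd and the $Q$-sums from the root only sample even-level nodes). The $M_{n-1}$ condition at level $n-4$ requires these to agree modulo $4$, so $\sigma_b\notin M_{n-1}$. (For $n=3$ there is no such $w_2$, which is why the single-node idea happens to work there.) The paper sidesteps this by taking $\lambda=(e,\beta^{2^{j-2}})$ with $\beta=R_{\infty,n-1}(\bm{\beta})\in B_{n-1}$: since $\beta$ already lies in $B_{n-1}\subseteq B'_{n-1}$, so do all its powers, and one only needs an easy induction to see $\beta^{2^{j-2}}\in U_{n-2}$ and to read off its level-$(n-2)$ parities. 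Constructing a witness that genuinely lands in $E_{n-1}\times E_{n-1}$ is the substantive step, and it requires more than a single-node element.

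A smaller point: in the even case, the triviality of $P(\sigma_a)$ and $P(\sigma_b)$ does not by itself force $P(\sigma,x_0)=1$, since $P(\sigma,x_0)$ is not a function of $P(\sigma,a)$ and $P(\sigma,b)$. The correct reason (which you do hint at in your final paragraph) is that for $\sigma\in U_{n-1}$ with $n$ even the nonzero parities sit at the odd level $n-1$, while the $Q$-sums entering $P(\sigma,x_0)$ sample only even-level nodes, so $P(\sigma,x_0)=1$ outright.
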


\begin{proof}
Every element $\sigma$ of either group $E_n$ or $E_{n-1}\times E_{n-1}$
acts trivially on the subtree $T_{n-1}$, with
\[ P(\sigma,x) \equiv 1 \pmod{2^i} \]
for every $1\leq m\leq n-1$ and every node $x\in\{a,b\}^m$,
where $i:= \lfloor (n-m+1)/2 \rfloor$.
This is the full set of defining conditions for $E_{n-1}\times E_{n-1}$,
but for $E_n$ there is the extra condition that
\begin{equation}
\label{eq:specialEn}
P(\sigma,x_0) \equiv 1 \pmod{2^j},
\quad \text{where  } j:= \lfloor (n+1)/2 \rfloor .
\end{equation}
Thus, $E_n=\{\sigma\in E_{n-1}\times E_{n-1} : P(\sigma,x_0) \equiv 1 \pmod{2^j} \}$
is clearly a subgroup of $E_{n-1}\times E_{n-1}$.
%, which in turn is a subgroup of $M_n$.
For $n$ even, equation~\eqref{eq:specialEn}
is true for all $\sigma\in U_{n-1}$ and hence all $\sigma\in E_{n-1}\times E_{n-1}$.
Thus, it suffices to show that the index is $2$ when $n$ is odd.

For the remainder of the proof, assume $n$ is odd, so that $n=2j-1$ with $j\geq 2$.
For $\sigma\in E_{n-1}\times E_{n-1} \subseteq U_{n-1}$,
we have $P(\sigma,x_0) \equiv 1 \pmod{2^{j-1}}$ by definition of $P$.
Since $E_{n-1}\times E_{n-1}$ is a subgroup of $M_n$
and $P:M_n\to(\ZZ/2^j\ZZ)^{\times}$ is a homomorphism, it suffices to show that
there is some $\lambda\in E_{n-1}\times E_{n-1}$ for which
$P(\lambda,x_0)\equiv 1+2^{j-1} \pmod{2^j}$.

Let $\beta:=R_{\infty,n-1}(\mapb)\in B'_{n-1}$,
and let $\lambda:=(e,\beta^{2^{(j-2)}})\in B'_{n-1} \times B'_{n-1}$.
Because $\mapb=(e,\mapa)$ and $\mapa^2=(\mapb,\mapb)$,
a simple induction on $j$ shows that that $\beta^{2^{(j-2)}}$ acts trivially on $T_{2j-3}=T_{n-2}$,
and hence $\lambda\in E_{n-1}\times E_{n-1}$.
Finally, another induction shows that for every node $y\in \{a,b\}^{2j-3}$, we have
\[ \Par\big(\beta^{2^{(j-2)}},y \big)
=\begin{cases}
1 & \text{ if } y = bs_1 b s_2 b \cdots s_{j-2} b \text{ for some } s_1,\ldots,s_{j-2}\in\{a,b\},
\\
0 & \text{ otherwise.}
\end{cases}
\]
Thus, when computing $Q(\lambda,st) \pmod{2^{j-1}}$ for any $s,t\in\{a,b\}$,
the only nontrivial term in the sum of equation~\eqref{eq:Qdef}
occurs for $s=t=b$, and it is $2^{j-2} \Par(\lambda,bbaba\cdots bab)$.
Therefore, Definition~\ref{def:Pmap} yields
\[ P(\lambda, x_0) \equiv (-1)^0 + 2\cdot 2^{j-2} -2\cdot 0 \equiv 1 + 2^{j-1} \pmod{2^j}, \]
as desired.
\end{proof}

\begin{thm}
\label{thm:Bndef}
For every $n\geq 1$,
we have $\dsps |E_n|=2^{e_n}$ and $\dsps |B_n| = |B'_n| =2^{b_n}$, where 
\begin{align}
\label{eq:bndef}
e_n &= \frac{2^n}{3} + \begin{cases}
2/3 & \text{ if $n$ is even,} \\
1/3 & \text{ if $n$ is odd,}
\end{cases}
\notag \\
b_n &= \frac{2^{n+1}}{3} + \frac{n}{2} - \begin{cases}
2/3 & \text{ if $n$ is even,} \\
5/6 & \text{ if $n$ is odd,}
\end{cases}
\end{align}
In particular, for every $n\geq 1$, we have $B_n=B'_n$.
Moreover, the closed basilica group $\overline{B}_{\infty}=\overline{\la \mapa, \mapb \ra}$
is precisely the kernel of the homomorphism $P:M_{\infty}\to\ZZ_2^{\times}$
\end{thm}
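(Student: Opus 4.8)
The plan is to prove the two size formulas in~\eqref{eq:bndef} by simultaneous induction on $n$, and then harvest the remaining assertions as corollaries. The base cases $n=1,2$ are checked by hand: $T_1$ has $|\Aut(T_1)|=2$, $B_1=B_1'=\Aut(T_1)$, $E_1$ is trivial, and similarly $T_2$ is small enough to enumerate. For the inductive step, fix $n\geq 3$ and assume the formulas for all smaller indices. The heart of the argument is the identity $|B_n'|=|B_{n-1}'|^2\cdot 2^{\lfloor n/2\rfloor}/c_n$ or, more cleanly, a clean exact sequence description. Concretely, I would first establish that $E_n$ and $B_n'$ are related by $|B_n'| = |M_n| = |E_n|\cdot|U_{n-1}\text{-quotient}|$ and that $R_{n,n-1}$ maps $B_n'$ onto $B_{n-1}'$ with kernel exactly $E_n$, so that $|B_n'| = |B_{n-1}'|\cdot|E_n|$. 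Combined with Lemma~\ref{lem:inductEn}, which gives $|E_n| = |E_{n-1}|^2/[E_{n-1}\times E_{n-1}:E_n]$ with index $1$ or $2$ according to parity, this yields recursions $e_n = 2e_{n-1} - \delta_n$ and $b_n = b_{n-1} + e_n$, where $\delta_n\in\{0,1\}$; one then checks the closed-form expressions in~\eqref{eq:bndef} satisfy these recursions.

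The first point requiring care is that $R_{n,n-1}(B_n') = B_{n-1}'$: the inclusion $\subseteq$ is immediate since truncating the defining congruences for $M_n$ at level $n-1$ gives the defining congruences for $M_{n-1}$ (the $j$ values shift correctly), but surjectivity requires lifting an element of $B_{n-1}'$ to $\Aut(T_n)$ while preserving the property $P(\sigma,x)\equiv P(\sigma,x_0)$ at the newly added level and not breaking it at lower levels; here one uses that the parities $\Par(\sigma,y)$ at level $n-1$ nodes are free to choose and that each contributes to the sum defining $P$ at lower nodes with a coefficient divisible by a sufficiently high power of $2$, so the lift can be adjusted node by node. The second point is that $\ker(R_{n,n-1}|_{B_n'}) = E_n$: an element of this kernel lies in $U_{n-1}$ by definition and satisfies the $M_n$-congruences, which is exactly the definition of $E_n$ as $U_{n-1}\cap B_n'$, so this is essentially a tautology once the definitions are unwound.

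Having the size formulas, the equality $B_n = B_n'$ follows for free: we already know $B_n\subseteq B_n'$ from the remark preceding the theorem (since $P(\mapa,\cdot)=P(\mapb,\cdot)=1$), and it suffices to show $|B_n|\geq |B_n'| = 2^{b_n}$. For this I would run a parallel induction on $|B_n|$ using the self-similarity structure from Remark~\ref{rem:splitB}: $B_n\cap U_1$ maps onto $B_{n-1}\times B_{n-1}$-type data, and combined with the index-$2$ contribution from $\mapa$ acting on level $1$, one gets a recursion matching the one for $b_n$ — alternatively, and more economically, one observes that the very same recursion $|B_n| = |B_{n-1}|\cdot|U_{n-1}\cap B_n|$ holds with $U_{n-1}\cap B_n$ playing the role of $E_n$, and Remark~\ref{rem:splitB} plus Lemma~\ref{lem:inductEn} identify its order. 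Finally, for the statement about $\overline{B}_\infty = \ker(P:M_\infty\to\ZZ_2^\times)$: the inclusion $\overline{B}_\infty\subseteq\ker P$ is the computation $P(\mapa,\cdot)=P(\mapb,\cdot)=1$ together with continuity of $P$ and the fact that $\ker P$ is closed (noted in Section~\ref{sec:Basil}); for the reverse inclusion, $\ker(P:M_\infty\to\ZZ_2^\times)$ is a closed subgroup of the compact group $M_\infty$, so it is the inverse limit of its images $R_{\infty,n}(\ker P)$, and one checks $R_{\infty,n}(\ker P) = \ker(P:M_n\to(\ZZ/2^j\ZZ)^\times) = B_n'= B_n = R_{\infty,n}(\overline{B}_\infty)$ for every $n$, whence the two closed groups coincide. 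The main obstacle I anticipate is the surjectivity claim $R_{n,n-1}(B_n')=B_{n-1}'$ and the bookkeeping of which power of $2$ each level-$(n-1)$ parity contributes to $P$ at each lower node — getting the floor functions in the $j$-indices to line up is where an off-by-one error is most likely to hide.
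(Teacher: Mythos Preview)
Your approach diverges from the paper's in one essential way: the paper does \emph{not} prove that $R_{n,n-1}\colon B'_n\to B'_{n-1}$ is surjective. It uses only the trivial inequality $|B'_n|\leq |B'_{n-1}|\cdot|E_n|$, hence $\log_2|B'_n|\leq e_1+\cdots+e_n=b_n$, and then closes the sandwich by quoting Pink's formula \cite[Proposition~2.3.1]{Pink} for $|B_n|$, checking by a short generating-function computation that it equals $2^{b_n}$. Since $B_n\subseteq B'_n$, this forces $|B_n|=|B'_n|=2^{b_n}$ and $B_n=B'_n$ simultaneously; surjectivity of $R_{n,n-1}$ then falls out \emph{a posteriori}. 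The final statement about $\overline{B}_\infty=\ker P$ is handled essentially as you describe.

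Your plan is more self-contained but has two problems. First, a small one: $E_1$ is not trivial. By definition $E_1=U_0\cap B'_1=\Aut(T_1)$, which has order $2$, matching $e_1=1$ from the formula. Second, and more seriously, your mechanism for surjectivity does not work as stated. The parities at level $n-1$ enter $P(\sigma,x)$, for $x$ at level $m$, only when $n-1-m$ is even (see the node pattern in the definition of $Q$); but the congruence modulus $j=\lfloor(n-m+1)/2\rfloor$ jumps when passing from $T_{n-1}$ to $T_n$ precisely at levels $m$ with $n-m$ even. These parities are disjoint, so the free level-$(n-1)$ choices cannot be used to fix the newly strengthened congruences in the way you suggest. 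Surjectivity is true, but proving it directly requires a more delicate argument --- which is exactly why the paper outsources the count of $|B_n|$ to Pink and runs the sandwich instead. Your alternative of computing $|B_n|$ via the self-similarity of Remark~\ref{rem:splitB} is closer to what Pink himself does, but note that Remark~\ref{rem:splitB} only gives that each $\sigma_b\in B_{n-1}$ lifts to \emph{some} pair $(\sigma_a,\sigma_b)\in B_n\cap U_1$, not that all pairs occur, so the recursion for $|B_n|$ is not immediate from that remark alone.
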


\begin{proof}
Since $E_1=B_1=B'_1=\Aut(T_1)$ is a group of order $2$, and since 
formulas~\eqref{eq:bndef} specify $e_1=b_1=1$, we have
$\dsps |E_1|=2^{e_1}$ and $\dsps |B_1| = |B'_1| =2^{b_1}$.
For all $n\geq 2$, the formula for $e_n$ clearly satisfies
\[ e_n = 2e_{n-1} - \begin{cases}
0 & \text{ if $n$ is even},  \\
1 & \text{ if $n$ is odd},
\end{cases} \]
and Lemma~\ref{lem:inductEn} says that the same relation holds for $\log_2 |E_n|$.
By induction, then, $\log_2 |E_n|=e_n$ for all $n\geq 1$.

Another simple induction shows that $b_n=e_1+e_2 + \cdots +e_n$.
However, we have not yet proven the equality of $b_n$, $\log_2 |B_n|$, and $\log_2 |B'_n|$
for $n\geq 2$.

Write $b'_n:=\log_2 |B'_n|$, and note that $b'_1=1$.
For $n\geq 2$, the homomorphism $R_{n,n-1}:B'_n\to B'_{n-1}$ has kernel $E_n$,
but we do not (yet) know that it is surjective.
Thus, $b'_n \leq b'_{n-1} + e_{n-1}$, and hence
\begin{equation}
\label{eq:bnineq}
b'_n \leq e_1 + \cdots + e_n = b_n.
\end{equation}
On the other hand, according to \cite[Proposition~2.3.1]{Pink}, we have
\begin{equation}
\label{eq:PinkBn}
\log_2 |B_n| = 2^n -1 - \sum_{m=0}^{n-1} 2^{n-1-m} \cdot \left\lfloor \frac{m}{2} \right\rfloor 
\end{equation}
Using the elementary identity
\[ \sum_{i=0}^{j-1} \frac{i}{4^i} = \frac{1}{4} \cdot \frac{(j-1) 4^{-j} - j 4^{1-j} + 1}{(3/4)^2}
= \frac{1}{9\cdot 4^{j-1}} \big( 4^j - 3j-1 \big), \]
we have, for $n=2j$ even,
\[ \sum_{m=0}^{n-1} 2^{n-1-m} \cdot \left\lfloor \frac{m}{2} \right\rfloor = \sum_{i=0}^{j-1} 3i \cdot 2^{2j-2i-2}
= \frac{1}{3} \big( 4^j - 3j-1 \big) = \frac{2^n}{3} - \frac{n}{2} - \frac{1}{3}, \]
and for $n=2j+1$ odd,
\[ \sum_{m=0}^{n-1} 2^{n-1-m} \cdot \left\lfloor \frac{m}{2} \right\rfloor = j + \sum_{i=0}^{j-1} 3i \cdot 2^{2j-2i-1}
= j+\frac{2}{3} \big( 4^j - 3j-1 \big) = \frac{2^n}{3} - \frac{n}{2} - \frac{1}{6}. \]
Substituting these expressions into equation~\eqref{eq:PinkBn} yields
$\log_2 |B_n| = b_n$, where $b_n$ is as in equation~\eqref{eq:bndef}.
By  inequality~\eqref{eq:bnineq} and the fact that $B_n\subseteq B'_n$, we have
\[ b'_n \leq b_n = \log_2 |B_n| \leq \log_2 |B'_n| = b'_n . \]
Thus, $\dsps |B_n| = |B'_n| =2^{b_n}$, and $B_n=B'_n$.

Finally, as noted near the start of this section,
the map $P:M_{\infty}\to\ZZ_2^{\times}$ is continuous.
Therefore, since $\mapa,\mapb\in\ker(P)$, we have $\overline{B}_{\infty}\subseteq\ker(P)$.
Conversely, given $\sigma\in\ker(P)$, define $\sigma_n:=R_{\infty,n}(\sigma)\in B'_n$
for each $n\geq 1$.
Because $\sigma\in B'_n=B_n$, 
there exists $\tau_n\in B_{\infty}$ such that $R_{\infty,n}(\tau_n)=\sigma_n$.
For any integer $n\geq 1$, the automorphisms $\sigma,\tau_n\in\Aut(T_{\infty})$
agree on $T_{n-1}$ and hence belong to the same coset of the subgroup $U_{n-1}$.
Thus, we have
\[ \sigma = \lim_{n\to\infty} \tau_n \in \overline{B}_{\infty}. \qedhere \]
\end{proof}

\begin{thm}
\label{thm:MBmap}
The homomorphism $P:M_{\infty}\to \ZZ_2^{\times}$ is surjective, and
we have the short exact sequence
\[ 0 \longrightarrow \overline{B}_{\infty} \longrightarrow M_{\infty}
\overset{P}{\longrightarrow} \ZZ_2^{\times} \longrightarrow 0 \]
In addition, for each $n\geq 1$, we have the short exact sequence
\[ 0 \longrightarrow B_n \longrightarrow M_n
\overset{P}{\longrightarrow} (\ZZ/2^j\ZZ)^{\times} \longrightarrow 0 , \]
where $j:= \lfloor (n+1)/2 \rfloor$. Moreover, $|M_n| = 2^{m_n}$, where
\begin{equation}
\label{eq:mndef}
m_n = \frac{2^{n+1}}{3} + n - \begin{cases}
5/3 & \text{ if $n$ is even,} \\
4/3 & \text{ if $n$ is odd.}
\end{cases}
\end{equation}
\end{thm}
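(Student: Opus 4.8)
The plan is to obtain Theorem~\ref{thm:MBmap} by assembling results already proved, with Theorem~\ref{thm:Bndef} doing the heavy lifting. That theorem already identifies the kernels, $\ker\big(P\colon M_\infty\to\ZZ_2^\times\big)=\overline{B}_\infty$ and $\ker\big(P\colon M_n\to(\ZZ/2^j\ZZ)^\times\big)=B'_n=B_n$, so each claimed short exact sequence is equivalent to the surjectivity of the corresponding $P$. Once surjectivity of $P\colon M_n\to(\ZZ/2^j\ZZ)^\times$ is known we get $|M_n|=|\ker P|\cdot|P(M_n)|=2^{b_n}\cdot 2^{\,j-1}$, using $|(\ZZ/2^j\ZZ)^\times|=2^{\,j-1}$ for $j\geq 1$; the stated value $|M_n|=2^{m_n}$ then amounts to the elementary identity $m_n=b_n+j-1$ with $j=\lfloor(n+1)/2\rfloor$, which one checks by substituting \eqref{eq:bndef} and \eqref{eq:mndef} and separating the cases $n$ even and $n$ odd. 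So the entire content of the theorem reduces to: \emph{$P$ is surjective.}

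I would prove surjectivity of $P\colon M_\infty\to\ZZ_2^\times$ first; the finite statements then follow at once. The cleanest argument is to realize a subgroup of $M_\infty$ on which $P$ is already onto, using an arboreal Galois group. Take $K=\QQ$ and any $x_0\in\QQ$ with $x_0\neq 0,-1$; then each $f^n(z)-x_0$ is separable (its only possible critical values are $0$ and $-1$), so $G_\infty$ is defined. By Corollary~\ref{cor:Pbasil}, the action on $\Orb_f^-(x_0)$ embeds $G_\infty$ equivariantly as a subgroup of $M_\infty$; in particular $P(\sigma)$ is the well-defined common value of $P(\sigma,x)$ for each $\sigma\in G_\infty$. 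By Lemma~\ref{lem:nrel} the field $K_\infty$ contains every $2$-power root of unity, and by Theorem~\ref{thm:Pembed} every $\sigma\in G_\infty$ acts on these by $\zeta\mapsto\zeta^{P(\sigma)}$. Thus $P|_{G_\infty}$ is the composite
\[
G_\infty=\Gal(K_\infty/\QQ)\ \twoheadrightarrow\ \Gal\big(\QQ(\zeta_{2^\infty})/\QQ\big)\ \xrightarrow{\ \sim\ }\ \ZZ_2^\times ,
\]
the first map being restriction (hence surjective) and the second the $2$-adic cyclotomic isomorphism. Therefore $P(M_\infty)\supseteq P(G_\infty)=\ZZ_2^\times$, which gives the first short exact sequence.

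For the finite case I would reduce modulo $2^j$. For $\sigma\in M_\infty$ and a node $x$ at level $m$, the restriction $R_{\infty,n}(\sigma)$ satisfies $P\big(R_{\infty,n}(\sigma),x\big)\equiv P(\sigma,x)\pmod{2^{\lfloor(n-m+1)/2\rfloor}}$ by \eqref{eq:Pdefmod} and the truncation remark following Definition~\ref{def:Pmap}; together with $P(\sigma,x)=P(\sigma,x_0)$ this shows $R_{\infty,n}(\sigma)\in M_n$ and $P\big(R_{\infty,n}(\sigma)\big)\equiv P(\sigma)\pmod{2^j}$. Hence $P(M_n)\supseteq\{\,P(\sigma)\bmod 2^j:\sigma\in M_\infty\,\}=P(M_\infty)\bmod 2^j=(\ZZ/2^j\ZZ)^\times$, so $P\colon M_n\to(\ZZ/2^j\ZZ)^\times$ is onto. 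This yields the second short exact sequence and, via the counting above, the formula $|M_n|=2^{m_n}$.

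The one nonformal step — the main obstacle — is the surjectivity of $P\colon M_\infty\to\ZZ_2^\times$; the Galois-theoretic argument above disposes of it by borrowing the surjectivity of the $2$-adic cyclotomic character over $\QQ$. If one instead insisted on a purely group-theoretic proof, one would note that $P(M_\infty)$ is a closed subgroup of $\ZZ_2^\times$ containing $-1$ (the automorphism transposing the two children of every node has $P\equiv -1$), so that it suffices to produce a single $\sigma\in M_\infty$ with $P(\sigma)\equiv 5\pmod 8$: then $\overline{\langle P(\sigma)\rangle}=1+4\ZZ_2$ and $P(M_\infty)=\langle-1\rangle\cdot(1+4\ZZ_2)=\ZZ_2^\times$. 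Exhibiting such a $\sigma$ — presumably by a self-similar recipe analogous to the definitions of $\mapa,\mapb$ — and checking both that $P(\sigma,\cdot)$ is constant and that this constant is $\equiv 5\pmod 8$ is the delicate calculation one must perform on that route.
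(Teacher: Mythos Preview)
Your proposal is correct, and your reduction to surjectivity together with the counting $m_n=b_n+(j-1)$ is exactly right. However, your main argument for surjectivity is genuinely different from the paper's.

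The paper proves surjectivity of $P:M_\infty\to\ZZ_2^\times$ purely inside $\Aut(T_\infty)$, by writing down two explicit automorphisms. First, the ``all-transpositions'' element $\mape$ with $\Par(\mape,x)=1$ at every node, for which one computes $Q(\mape,x)=-1/3$ and hence $P(\mape)=-1$; this is the element you allude to. Second, a self-similar element $\mapt$ defined inductively by $\Par(\mapt,xaa)=\Par(\mapt,xab)=0$, $\Par(\mapt,xba)=1$, $\Par(\mapt,xbb)=\Par(\mapt,x)$, with $\mapt$ trivial on $T_2$; one checks directly that $Q(\mapt,x)=\Par(\mapt,x)$ and then that $P(\mapt,x)=3$ at every node. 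Since $\{-1,3\}$ topologically generates $\ZZ_2^\times$, surjectivity follows, and restricting $\mape,\mapt$ to $T_n$ gives the finite surjectivity as well.

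By contrast, you import surjectivity from arithmetic: you realize a copy of $G_\infty$ for $K=\QQ$ inside $M_\infty$ via Corollary~\ref{cor:Pbasil}, identify $P|_{G_\infty}$ with the $2$-adic cyclotomic character via Theorem~\ref{thm:Pembed}, and then use that this character is surjective over $\QQ$. This is slick and avoids the delicate explicit construction of $\mapt$ that you flag at the end. The trade-off is that your argument produces surjectivity for the particular labeling supplied by Lemma~\ref{lem:pickzeta}, whereas the paper's construction of $\mape,\mapt$ works verbatim for any fixed labeling; to be completely clean you should note that any two labelings are intertwined by an element of $\Aut(T_\infty)$ carrying one copy of $(M_\infty,P)$ to the other, so surjectivity is labeling-independent. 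The paper's route is more self-contained; yours is shorter and explains \emph{why} $P$ ought to be onto (it \emph{is} the cyclotomic character in any arithmetic realization).
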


\begin{proof}
\textbf{First statement}.
By Theorem~\ref{thm:Mgroup}, the map $P$ is a homomorphism,
and by Theorem~\ref{thm:Bndef}, its kernel is $\overline{B}_{\infty}$.
It remains to show that $P$ is surjective.

Fix a labeling of $T_{\infty}$. Define an automorphism $\mape \in\Aut(T_{\infty})$ by
\begin{equation}
\label{eq:epsdef}
\Par(\mape,x)=1 \text{ for all nodes $x$ of } T_{\infty} .
\end{equation}
It is immediate from Definition~\ref{def:Pmap} that
$Q(\mape,x)=1+4+4^2+\cdots = -1/3\in\ZZ_2$
for every node $x$, and hence that $P(\mape,x)=-1\in\ZZ_2$.
In particular, $\mape\in M_{\infty}$, with $P(\mape)=-1$.

Define another automorphism $\mapt\in\Aut(T_{\infty})$ inductively, as follows.
First, define
\[ \Par(\mapt,x_0):=\Par(\mapt,a):=\Par(\mapt,b):=0, \]
so that $\mapt$ acts trivially on $T_2$. Then,
once we have defined $\Par(\mapt,x)$ at a particular node $x$,
define $\Par(\mapt,y)$ for each node $y$ two levels above $x$ by:
\begin{align}
\label{eq:thetadef}
\Par(\mapt,xaa) & := \Par(\mapt,xab) :=0, \notag \\
\Par(\mapt,xba) &:=1, \quad \text{and} \\
\Par(\mapt,xbb) &:= \Par(\mapt,x). \notag
\end{align}
Because $\Par(\mapt,yaa) = \Par(\mapt,yab)=0$ for any node $y$,
we have $Q(\mapt,x)=\Par(\mapt,x)$ for all nodes $x$ of the tree.
Thus, according to Definition~\ref{def:Pmap} and equations~\eqref{eq:thetadef},
we have
\[ P(\mapt,x) = \begin{cases}
(-1)^0 + 2(1 + 0 - 0 - 0) = 3 & \text{ if } \Par(\mapt,x)=0,
\\
(-1)^1 + 2(1 + 1 - 0 - 0) = 3 & \text{ if } \Par(\mapt,x)=1.
\end{cases} \]
Therefore, $\mapt\in M_{\infty}$, with $P(\mapt)=3$.

Because of the automorphisms $\mape,\mapt\in M_{\infty}$,
the image of $P$ contains the closure
of the subgroup $\la -1, 3 \ra$ of $\ZZ_2^{\times}$ generated by $-1$ and $3$.
However, $\{-1,3\}$ is a set of topological generators for $\ZZ_2^{\times}$;
therefore, the image of $P$ is all of $\ZZ_2^{\times}$, as desired.

\textbf{Second statement}.
By Theorem~\ref{thm:Bndef}, we know that $B_n=B'_n$ is the
kernel of the homomorphism $P:M_n\to (\ZZ/2^j\ZZ)^{\times}$.
This homomorphism is surjective, since $-1$ and $3$ together generate
the group $(\ZZ/2^j\ZZ)^{\times}$, and they are the images of
$R_{\infty,n}(\mape)$ and $R_{\infty,n}(\mapt)$ under $P$.
Thus, we have the desired exact sequence.

\textbf{Third statement}.
By the second statement, we have $|M_n|=|(\ZZ/2^j\ZZ)^{\times}| \cdot |B_n|$
for every $n\geq 1$, where $j:=\lfloor (n+1)/2 \rfloor$. Therefore, by Theorem~\ref{thm:MBmap},
\begin{align*}
\log_2 |M_n| &= (j-1) + b_n
=  \left\lfloor \frac{n+1}{2} \right\rfloor -1 + \frac{2^{n+1}}{3} + \frac{n}{2} - \begin{cases}
2/3 & \text{ if $n$ is even,} \\
5/6 & \text{ if $n$ is odd,}
\end{cases}
\\
&= \frac{2^{n+1}}{3} + n - \begin{cases}
5/3 & \text{ if $n$ is even,} \\
4/3 & \text{ if $n$ is odd.}
\end{cases}
\qedhere
\end{align*}
\end{proof}

To help clarify formulas~\eqref{eq:bndef} and~\eqref{eq:mndef}, the following table gives
the logarithms of the orders of the 2-groups $E_n$, $B_n$, $M_n$, and $\Aut(T_n)$
for some small values of $n$.

\smallskip

\hfill
\begin{tabular}{|c|c|c|c|c|c|c|c|c|c|c|}
\hline
$n$ & 1 & 2 & 3 & 4 & 5 & 6 & 7 & 8 & 9 & 10
\\
\hline
$\log_2 |E_n|$ & $1$ & $2$ & $3$ & $6$ & $11$ & $22$ & $43$ & $86$
& $171$ & $342$
\\
\hline
$\log_2 |B_n|$ & $1$ & $3$ & $6$ & $12$ & $23$ & $45$ & $88$ & $174$
& $345$ & $687$
\\
\hline
$\log_2 |M_n|$ & $1$ & $3$ & $7$ & $13$ & $25$ & $47$ & $91$ & $177$
& $349$ & $691$
\\
\hline
$\log_2 |\Aut(T_n)|$ & $1$ & $3$ & $7$ & $15$ & $31$ & $63$ & $127$ & $255$
& $511$ & $1023$
\\
\hline
\end{tabular}
\hfill
{}

\section{The Arithmetic Basilica as a Galois Group}
\label{sec:mainproof}
In \cite{Pink}, Pink described $\Gal(K_{\infty}/K)$ in the case that $K$ is a function field.
We are now prepared to prove that Pink's description of $M_{\infty}$
--- in terms of generators and normalizers --- coincides with our description
in terms of the map $P$.

\begin{thm}
\label{thm:PinkEquiv}
Let $k$ be a field of characteristic different from $2$, and suppose that $[k(\zeta_8):k]=4$,
where $\zeta_8$ is a primitive eighth root of unity. Let $K=k(t)$, where $t$ is transcendental
over $k$, let $x_0:=t\in K$, and let $G_{\infty}:=\Gal(K_{\infty}/K)$.
Then $G_{\infty}\cong M_{\infty}$.
\end{thm}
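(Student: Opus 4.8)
The plan is to identify our group $M_\infty$, defined through the function $P$, with Pink's description of $\Gal(K_\infty/K)$, using the embedding $G_\infty\hookrightarrow M_\infty$ from Corollary~\ref{cor:Pbasil} and the short exact sequence $0\to\overline{B}_\infty\to M_\infty\xrightarrow{P}\ZZ_2^\times\to 0$ of Theorem~\ref{thm:MBmap}. Fix an algebraic closure $\kbar$ of $k$, put $\Kbar:=\kbar(t)$, and let $\Kbar_\infty:=\bigcup_{n\geq 0}\Kbar\big(f^{-n}(t)\big)$. Since each polynomial $f^n(z)-t$ is defined over $K=k(t)$, we have $\Kbar_\infty=\kbar\cdot K_\infty$, while the constant field $F:=K_\infty\cap\Kbar$ has the form $F=\ell(t)$ for some algebraic extension $\ell$ of $k$ inside $\kbar$; restriction of field automorphisms then induces an isomorphism $\Gal(\Kbar_\infty/\Kbar)\cong\Gal(K_\infty/F)$, equivariant for the action on the preimage tree, and we have an exact sequence $1\to\Gal(K_\infty/F)\to G_\infty\to\Gal(\ell/k)\to 1$.

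First I would pin down the geometric part. By \cite[Theorem~2.5.6]{Pink}, $\Gal(\Kbar_\infty/\Kbar)$ acts on the preimage tree as the closed basilica group; in particular, by \cite[Proposition~2.3.1]{Pink} (as used in the proof of Theorem~\ref{thm:Bndef}), its image in $\Aut(T_n)$ has order $2^{b_n}$. On the other hand, any $\sigma\in\Gal(K_\infty/F)$ fixes every $2$-power root of unity in $K_\infty$, since those lie in $\kbar\cap K_\infty\subseteq F$ by Lemma~\ref{lem:nrel}; by Theorem~\ref{thm:Pembed} this forces $P(\sigma,x)=1$ at every node, so the restriction of $\sigma$ to $T_n$ lies in $B'_n=\ker\big(P\colon M_n\to(\ZZ/2^j\ZZ)^\times\big)=B_n$, which also has order $2^{b_n}$ (Theorem~\ref{thm:Bndef}). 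Comparing cardinalities, the image of $\Gal(K_\infty/F)$ in $\Aut(T_n)$ is exactly $B_n$ for each $n$, hence equals $\overline{B}_\infty=\ker\big(P\colon M_\infty\to\ZZ_2^\times\big)$ in the limit. This cardinality count is what lets us bypass the (otherwise delicate) task of matching our explicit tree labeling from Lemma~\ref{lem:pickzeta} with the labeling used in \cite{Pink}: we invoke Pink only for the number $2^{b_n}$, while the inclusion into $B_n$ is proved directly, for our labeling, via Theorem~\ref{thm:Pembed}.

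Next I would handle the cyclotomic quotient. By Theorem~\ref{thm:Pembed}, the homomorphism $P\colon G_\infty\to\ZZ_2^\times$ is the $2$-adic cyclotomic character, so its image is $\Gal\big(k(\zeta_{2^\infty})/k\big)$, a closed subgroup of $\ZZ_2^\times$ whose reduction modulo $8$ is $\Gal\big(k(\zeta_8)/k\big)$. The hypothesis $[k(\zeta_8):k]=4$ makes this reduction all of $(\ZZ/8\ZZ)^\times$, which is the Frattini quotient of the pro-$2$ group $\ZZ_2^\times$ (indeed $\{-1,3\}$ topologically generate $\ZZ_2^\times$, as noted in the proof of Theorem~\ref{thm:MBmap}); by the Frattini argument, $P(G_\infty)=\ZZ_2^\times$. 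Now combine: $G_\infty\subseteq M_\infty$ with $\ker(P|_{M_\infty})=\overline{B}_\infty\subseteq G_\infty$ (previous step) and $P(G_\infty)=\ZZ_2^\times=P(M_\infty)$, so for $m\in M_\infty$ we may pick $g\in G_\infty$ with $P(g)=P(m)$, whence $mg^{-1}\in\ker(P|_{M_\infty})=\overline{B}_\infty\subseteq G_\infty$ and $m\in G_\infty$. Thus $G_\infty=M_\infty$ as subgroups of $\Aut(T_\infty)$, which is the asserted equivariant isomorphism. (Equivalently, one may run the argument at finite level: with $j=\lfloor(n+1)/2\rfloor$ and $F_n:=K_n\cap\kbar(t)=\ell_n(t)$ one gets $|G_n|=|\Gal(K_n/F_n)|\cdot[\ell_n:k]\geq 2^{b_n}\cdot 2^{j-1}=|M_n|$, forcing $G_n=M_n$ for all $n$, hence $G_\infty=M_\infty$ in the inverse limit.)

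I expect the main obstacle to be the interface with \cite{Pink}: extracting from Pink's work the geometric Galois group as an honest subgroup of $\Aut(T_\infty)$ (not merely up to abstract isomorphism), and identifying the arithmetic quotient with the cyclotomic character, compatibly with the labeling hard-wired into our definition of $P$. The cardinality count in the geometric step is specifically designed to reduce this to purely numerical input ($2^{b_n}=|B_n|$) together with statements already proved for our own labeling; after that, the cyclotomic step and the final assembly are short.
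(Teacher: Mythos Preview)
Your argument is correct and follows the same overall blueprint as the paper: embed $G_\infty$ into $M_\infty$ via Corollary~\ref{cor:Pbasil}, identify the geometric subgroup with $\overline{B}_\infty=\ker P$, show that $P$ restricted to $G_\infty$ surjects onto $\ZZ_2^\times$, and conclude by comparing the two short exact sequences. The difference lies in how the geometric step is handled. The paper asserts that the labeling produced by Lemma~\ref{lem:pickzeta} can be matched with Pink's labeling (after $a\leftrightarrow 1$, $b\leftrightarrow 0$), so that $\mapa,\mapb$ literally coincide with Pink's generators $a_1,a_2$, and then invokes \cite[Theorem~2.8.2]{Pink} to get $\Gal(K'_\infty/K')\cong\overline{B}_\infty$ on the nose; it also cites \cite[Theorem~2.8.4]{Pink} for the arithmetic exact sequence. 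You instead extract from Pink only the labeling-independent datum that the image of the geometric Galois group in $\Aut(T_n)$ has order $2^{b_n}$, and obtain the containment in $B_n$ directly from Theorem~\ref{thm:Pembed} (elements of $\Gal(K_\infty/F)$ fix all $2$-power roots of unity, hence have $P=1$). This is a genuine simplification: it removes the need to reconcile the labeling of Lemma~\ref{lem:pickzeta} with Pink's, a point the paper passes over rather quickly. Your handling of the cyclotomic quotient via the Frattini quotient $(\ZZ/8\ZZ)^\times$ of $\ZZ_2^\times$ is likewise a direct substitute for the citation of \cite[Theorem~2.8.4]{Pink}. The net effect is that your proof uses Pink's work only through the single numerical input $|B_n|=2^{b_n}$ (which the paper already imports in the proof of Theorem~\ref{thm:Bndef}), at the cost of a few extra lines setting up the constant-field extension $\ell/k$.
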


%As usual, the isomorphism in Theorem~\ref{thm:PinkEquiv} is not only a group isomorphism,
%but also equivariant with respect to the action on the tree $T_{2,\infty}$.
%
%\begin{proof}[Proof of Theorem~\ref{thm:PinkEquiv}]
\begin{proof}
Let $K':=\kbar(t)$, where $\kbar$ is an algebraic closure of $k$,
and let $K'_{\infty}=\bigcup_{n\geq 0} K'_n$, where $K'_n:=K'(f^{-n}(t))$.

Label the tree $T_{\infty}$ of preimages $\Orb_f^-(x_0)$
as in Lemma~\ref{lem:pickzeta}.
For compatibility with Pink's labeling in \cite{Pink},
identify our label $a$ with the label $1$ of \cite{Pink},
and identify our label $b$ with the label $0$ of \cite{Pink}.
Then in the notation of \cite[equation~(2.0.1)]{Pink},
our automorphism $\mapa$ is $a_1$, and our automorphism $\mapb$ is $a_2$.

By Corollary~\ref{cor:Pbasil}, there is a homomorphism
$G_{\infty}\hookrightarrow M_{\infty}$ respecting the action on the tree.
By \cite[Theorem~2.8.2]{Pink}, restricting this homomorphism yields
an isomorphism $\Gal(K'_{\infty}/K') \cong \overline{B}_{\infty}$.
In addition, by \cite[Theorem~2.8.4]{Pink}, the Galois group $\Gal(K'_{\infty}/K')$
fits into a short exact sequence
\begin{equation}
\label{eq:sesPink}
0 \longrightarrow \Gal(K'_{\infty}/K')  \longrightarrow G_{\infty} 
\overset{\rho}{\longrightarrow} \ZZ_2^{\times} \longrightarrow 0
\end{equation}
where $\rho: G_{\infty}\to \ZZ_2^{\times}$ is given by the cyclotomic action
of Galois on $2$-power roots of unity.
The sequence~\eqref{eq:sesPink} and that of Theorem~\ref{thm:MBmap}
together yield the diagram
\[ \begin{tikzcd}
0 \arrow[r]
  & \Gal(K'_{\infty}/K') \arrow[r] \arrow[d, "\wr"]
  & G_{\infty} \arrow[r, "\rho"] \arrow[d, hook]
  & \ZZ_2^{\times} \arrow[r] \arrow[d, equal]
  & 0
\\
0 \arrow[r]
  &  \overline{B}_{\infty} \arrow[r]
  & M_{\infty} \arrow[r, "P"]
  & \ZZ_2^{\times} \arrow[r] 
  & 0
\end{tikzcd} \]
The first square of the diagram commutes because the isomorphism
$\Gal(K'_{\infty}/K') \cong \overline{B}_{\infty}$ is the restriction of the injection
$G_{\infty}\hookrightarrow M_{\infty}$.
The second square also commutes, because
Theorem~\ref{thm:Pembed} says that $P$ is also given by the cyclotomic action
of Galois on $2$-power roots of unity.
Therefore, the homomorphism $G_{\infty}\hookrightarrow M_{\infty}$
is an isomorphism.
%for example by the Steenrod Five-Lemma.
\end{proof}

With the goal of extending Theorem~\ref{thm:PinkEquiv} to the case that the base field $K$
is a number field,
recall that the \emph{Frattini subgroup} of a (profinite) group $G$ is the intersection $\Phi$
of all maximal (closed) subgroups of $G$. Clearly $\Phi$ is a normal subgroup of $G$.
We will need the following well-known fact,
which we prove for the convenience of the reader.

\begin{prop}
\label{prop:fratbasic}
Let $G$ be a finite or profinite group. Let $\Phi$ be the
Frattini subgroup of $G$. Let $H$ be a subgroup of $G$ that intersects all cosets of $\Phi$,
i.e., for which $\Phi H = G$. Then $H=G$.
\end{prop}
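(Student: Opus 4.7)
The plan is a standard contradiction argument. Suppose $H \neq G$ (in the profinite case, take $H$ to be closed; if not, pass to $\overline{H}$, noting that $\Phi\overline{H} \supseteq \Phi H = G$, and then descend once we know $\overline{H} = G$). The goal is to find a maximal proper subgroup $M$ of $G$ containing $H$; then by definition of the Frattini subgroup, $\Phi \subseteq M$, and since $H \subseteq M$ as well, we get $\Phi H \subseteq M \subsetneq G$, contradicting $\Phi H = G$.

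In the finite case, the maximal subgroup $M \supseteq H$ exists trivially: just pick any proper subgroup of maximal order containing $H$. No further argument is needed, and the conclusion follows immediately.

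In the profinite case the only real content is producing such an $M$. The key step is to first find \emph{some} proper open subgroup containing $H$. Since $G$ is profinite and $H$ is a proper closed subgroup, there is an open normal subgroup $N \trianglelefteq G$ with $HN \neq G$ (otherwise $H$ would be dense in $G$, hence equal to $G$ since it is closed). Then $HN$ is open and proper, and by applying Zorn's lemma inside the finite group $G/N$ (or equivalently among the finitely many subgroups of $G$ containing $N$), we may enlarge $HN$ to a maximal open proper subgroup $M$ of $G$ containing $H$. The Frattini subgroup $\Phi$ is the intersection of all such maximal open subgroups, so $\Phi \subseteq M$, and the argument concludes as above.

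The only subtlety — and thus the main obstacle, though a mild one — is the topological step of exhibiting a maximal open subgroup containing the given closed $H$; once that is in hand, the set-theoretic inclusion $\Phi H \subseteq M$ does all the work.
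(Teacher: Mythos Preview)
Your argument is correct and follows the same route as the paper's: assume $H\neq G$, produce a maximal (open) subgroup $M\supseteq H$, and derive the contradiction $\Phi H\subseteq M\subsetneq G$. You supply more detail than the paper does in the profinite case---the paper simply asserts the existence of such an $M$, whereas you justify it via an open normal $N$ with $HN\neq G$ and then pass to the finite quotient $G/N$---and you also flag the closedness issue for $H$, which the paper leaves implicit.
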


\begin{proof}
Suppose $H\neq G$.
Since $G$ is (pro)finite, there is a maximal (closed) subgroup $M\subsetneq G$ containing $H$.
Because $M\neq G$, there is some $\sigma\in G\smallsetminus M$.
We have $\Phi\subseteq M$ by definition of the Frattini subgroup.
However, since $\Phi H =G$, there exist some $\phi\in \Phi$ and $\tau\in H$
such that $\sigma = \phi \tau$. Therefore, since $\phi\in \Phi\subseteq M$
and $\tau\in H\subseteq M$, we have $\sigma\in M$.
The conclusion follows from this contradiction.
\end{proof}

We can use Theorem~\ref{thm:PinkEquiv} to compute the Frattini subgroup of $M_\infty$,
as follows.

\begin{thm}
\label{thm:fratM}
The Frattini subgroup $\Phi$ of the arithmetic basilica group $M_{\infty}$
has index $16$ in $M_{\infty}$, and it consists precisely of those $\sigma\in M_{\infty}$ for which:
\begin{itemize}
\item $\sigma$ fixes both nodes at level $1$ of the tree $T_{\infty}$,
\item $\sigma$ acts as an even permutation on the four nodes at level $2$ of the tree, and
\item $P(\sigma)\equiv 1 \pmod{8}$.
\end{itemize}
%Moreover, with notation as in Theorem~\ref{thm:PinkEquiv}, the fixed field $K_{\infty}^{\Phi}$,
%i.e., the set of $\sigma\in K_{\infty}$
%fixed by every element of the isomorphic image of $\Phi$ in the Galois group
%$G_{\infty}\cong M_{\infty}$, is $K_{\infty}^{\Phi}=K(\sqrt{t}, \sqrt{t+1}, \zeta_8)$.
\end{thm}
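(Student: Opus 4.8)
The plan is to exploit that $M_{\infty}$ is a pro-$2$ group: since $\Aut(T_{n})$ has order $2^{2^{n}-1}$, the group $\Aut(T_{\infty})$ is pro-$2$, hence so is its closed subgroup $M_{\infty}$. Consequently $M_{\infty}/\Phi$ is the largest continuous elementary abelian $2$-quotient of $M_{\infty}$, and $[M_{\infty}:\Phi]=2^{d}$, where $d$ is the minimal number of topological generators of $M_{\infty}$. I would prove the theorem by squeezing $\Phi$ between a conveniently described normal subgroup $N$ of index $16$ (giving $\Phi\subseteq N$) and the bound $d\le 4$ (giving $[M_{\infty}:\Phi]\le 16$).

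Let $N$ be the set of $\sigma\in M_{\infty}$ satisfying the three conditions in the statement, and consider the map
\[
\Psi\colon M_{\infty}\longrightarrow S_{2}\times\{\pm1\}\times(\ZZ/8\ZZ)^{\times}
\]
sending $\sigma$ to the triple consisting of: the permutation $\sigma$ induces on the two level-$1$ nodes; the sign of the permutation $\sigma$ induces on the four level-$2$ nodes; and $P(\sigma)\bmod 8$. The first two coordinates are ``restrict to a level, then take sign'' homomorphisms, and the third is $P$ (a homomorphism by Theorem~\ref{thm:Mgroup}) followed by reduction, so $\Psi$ is a continuous homomorphism; its target is abelian of exponent $2$ because $(\ZZ/8\ZZ)^{\times}\cong(\ZZ/2\ZZ)^{2}$, hence is isomorphic to $(\ZZ/2\ZZ)^{4}$. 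Thus $N=\ker\Psi$ is closed and normal with $\Phi\subseteq N$. To get $[M_{\infty}:N]=16$ I would check that $\Psi$ is surjective by evaluating it on $\mapa,\mapb,\mape,\mapt$: using $P(\mapa)=P(\mapb)=1$, $P(\mape)=-1$, $P(\mapt)=3$, that $\mapt$ fixes $T_{2}$ pointwise, that $\mapa$ acts nontrivially on level $1$ and evenly on level $2$, and that $\mapb$ fixes level $1$ but acts oddly on level $2$, one computes $\Psi(\mapa)=(s,1,1)$, $\Psi(\mapb)=(1,-1,1)$, $\Psi(\mapt)=(1,1,3)$, $\Psi(\mape)=(s,1,-1)$, where $s$ is the nontrivial element of $S_{2}$. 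Then $\Psi(\mapa)\Psi(\mape)=(1,1,-1)$, and since $\{3,-1\}$ generates $(\ZZ/8\ZZ)^{\times}$, these four images generate the whole target; hence $[M_{\infty}:N]=16$ and $[M_{\infty}:\Phi]\ge 16$.

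For the reverse bound, $M_{\infty}=\overline{\la\mapa,\mapb,\mape,\mapt\ra}$. Indeed, by Theorem~\ref{thm:MBmap} this closed subgroup $H$ contains $\ker P=\overline{B}_{\infty}=\overline{\la\mapa,\mapb\ra}$, while $P(H)$ is a closed subgroup of $\ZZ_{2}^{\times}$ containing $P(\mape)=-1$ and $P(\mapt)=3$ and hence equals $\ZZ_{2}^{\times}$; a coset count then forces $H=M_{\infty}$. So $d\le 4$ and $[M_{\infty}:\Phi]\le 16$. Combined with the previous paragraph, $[M_{\infty}:\Phi]=16=[M_{\infty}:N]$, and since $\Phi\subseteq N$ with equal finite index, $\Phi=N$ --- exactly the asserted description. (Alternatively, one can obtain the lower bound via Theorem~\ref{thm:PinkEquiv}: for $K=k(t)$ with $[k(\zeta_{8}):k]=4$ and $x_{0}=t$, identify $M_{\infty}\cong\Gal(K_{\infty}/K)$; then $N$ corresponds to $\Gal\bigl(K_{\infty}/K(\sqrt{-x_{0}},\sqrt{1+x_{0}},\zeta_{8})\bigr)$, the three conditions translating by Lemma~\ref{lem:2down} and Theorem~\ref{thm:Pembed} into ``$\sigma$ fixes $\sqrt{1+x_{0}}$'', ``$\sigma$ fixes $\sqrt{-x_{0}}$'', and ``$\sigma$ fixes $\zeta_{8}$'', and $[K(\sqrt{-x_{0}},\sqrt{1+x_{0}},\zeta_{8}):K]=16$ follows from the hypothesis on $k$ and the transcendence of $t$; this re-proves $[M_{\infty}:N]=16$ and displays $K(\sqrt{-x_{0}},\sqrt{1+x_{0}},\zeta_{8})$ as the maximal multiquadratic subextension of $K_{\infty}$.)

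The one substantial point is the surjectivity of $\Psi$ --- that the level-$1$, level-$2$-sign, and $P\bmod 8$ data are jointly independent rather than satisfying a hidden relation --- since that is what pins the index at exactly $16$. Everything else is general pro-$2$ theory or a direct unwinding of the definitions of $P$ and of $\mapa,\mapb,\mape,\mapt$ from Sections~\ref{sec:Pdef}--\ref{sec:Basil}; in the parenthetical, the translation ``$\sigma$ even on level $2$'' $\Leftrightarrow$ ``$\sigma$ fixes $\sqrt{-x_{0}}$'' needs only that $\sqrt{-x_{0}}$ is, by Lemma~\ref{lem:2down}, a product of one level-$2$ node lying over each level-$1$ node, and that such a product is scaled by $\sigma$ precisely by the sign of the permutation $\sigma$ induces on the four level-$2$ nodes.
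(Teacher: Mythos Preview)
Your proof is correct and takes a genuinely different route from the paper's. You argue purely group-theoretically: $M_{\infty}$ is pro-$2$, so $[M_{\infty}:\Phi]=2^{d}$ with $d$ the minimal number of topological generators; you bound $d\le 4$ by exhibiting $\mapa,\mapb,\mape,\mapt$ as topological generators, and you bound $[M_{\infty}:\Phi]\ge 16$ by producing the explicit elementary abelian $2$-quotient $\Psi$ and checking surjectivity on those same four elements. The paper instead works arithmetically over $K=\QQ(t)$: invoking Theorem~\ref{thm:PinkEquiv} to identify $M_{\infty}\cong G_{\infty}$, it uses a ramification argument (discriminant formula~\eqref{eq:Delta} plus specialization at $t=1$ to control odd primes) to show that every quadratic subextension of $K_{\infty}/K$ lies in $L=K(\sqrt{t},\sqrt{t+1},\zeta_{8})$, whence $\Phi=\Gal(K_{\infty}/L)$ has index $[L:K]=16$, and then translates the fixed-field conditions into the three bullet points.

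Your approach is more self-contained: it needs only the definitions of $\mapa,\mapb,\mape,\mapt$ and the facts $P(\mape)=-1$, $P(\mapt)=3$ already established in the proof of Theorem~\ref{thm:MBmap}, and avoids both Pink's results and the ramification computation. The paper's approach, on the other hand, yields as a byproduct the identification of the fixed field of $\Phi$ as $K(\sqrt{-x_{0}},\sqrt{1+x_{0}},\zeta_{8})$ and the fact that this is the maximal multiquadratic subextension of $K_{\infty}$ --- information that feeds directly into the proof of Theorem~\ref{thm:condition}. Your parenthetical alternative recovers part of this, but the paper's ramification step is what certifies that \emph{no other} square roots appear in $K_{\infty}$, a statement your purely group-theoretic route does not need and does not prove.
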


\begin{proof}
\textbf{Step~1}. Let $K:=\QQ(t)$ 
and let $L:=K(\sqrt{t}, \sqrt{t+1}, \zeta_8)$. We claim that for any $a\in K$
for which $\sqrt{a}\in K_{\infty}$, we have $\sqrt{a}\in L$. To prove this claim, it
suffices to consider squarefree $a\in\ZZ[t]$ with $\sqrt{a}\in K_{\infty}$.

Let $K':=\Qbar(t)$, and consider any monic irreducible polynomial $h\in\Qbar[t]$ dividing $a$.
Then the prime $(h)$ of $K'$ ramifies in $K'(\sqrt{a})\subseteq K'_{\infty}$
and hence in $K'_n$ for some $n$, where $K'_n:=K'(f^{-n}(t))$ and $K'_{\infty}=\bigcup_n K_n$.
However, the discriminant $\Delta_n$ of the polynomial $f^n(z)-t$ is
\begin{equation}
\label{eq:Delta}
\Delta_n = (-4)^{2^{n-1}} \cdot \Delta_{n-1}^2 \cdot \big(f^n(0)-t \big) .
\end{equation}
(This is a standard iterative discriminant formula; see, for example,
\cite[Proposition~3.2]{AHM} or \cite[equation~(1)]{BenJuu}.)
Since $f^n(0)$ is always $0$ or $-1$,
it follows that the only finite primes at which $K'_n/K'$ ramifies are $(t)$ and $(t+1)$.
Therefore, $h=t$ or $h=t+1$.

Hence, $a$ is of the form $a=ct^i (t+1)^j$ where $c\in\ZZ$ is squarefree and $i,j\in\{0,1\}$.
If $c$ is divisible by an odd prime $p$, then working over $k:=\QQ$ by
specializing $t=1$, the prime $p$ ramifies in $k_n$ for some $n$,
where $k_n:=k(f^{-n}(1))$. However, for $t=1$, the discriminant $\Delta_n$
from equation~\eqref{eq:Delta} is of the form $(-1)^M 2^N$,
and in particular is not divisible by $p$, a contradiction.

Thus, $c$ must be $\pm 1$ or $\pm 2$.
Either way, we have $\sqrt{a}\in L$, proving our claim.

\medskip

\textbf{Step~2}.
Let $H$ be any maximal closed subgroup of $M_{\infty}$.
Since $M_{\infty}=\varprojlim M_n$ is a pro-2 group, we must have $[M_{\infty}:H]=2$.
The fields $k:=\QQ$ and $K:=\QQ(t)$ satisfy the hypotheses of Theorem~\ref{thm:PinkEquiv},
and hence with $x_0=t$, we have $G_{\infty}\cong M_{\infty}$.
The subfield $K_{\infty}^H$ of elements of $K_{\infty}$ fixed by the isomorphic image of
$H$ in $G_\infty$ is therefore a quadratic extension of $K$.
By the claim of Step~1, then, we have $K_{\infty}^H\subseteq L$,
where $L:=K(\sqrt{t}, \sqrt{t+1}, \zeta_8)$.
Thus, the Frattini subgroup of $G_{\infty}$ is contained in $\Psi:=\Gal(K_{\infty}/L)$.
 
Conversely, each of $\sqrt{t}$, $\sqrt{t+1}$, $\sqrt{2}$, $\sqrt{-1}$
generates a quadratic extension of $K$ lying in $K_{\infty}$,
and together they generate $L$.
That is, $\Gal(K_{\infty}/K(\sqrt{a}))$ is a maximal closed subgroup of $G_\infty$ for each of $a=t,t+1,2,-1$,
and the intersection of these four subgroups is $\Psi$.
Hence, the Frattini subgroup of $G_{\infty}$ is exactly $\Psi$,
which has index $[L:K]=16$ in $G_{\infty}$.
Therefore, the Frattini subgroup $\Phi$ of $M_{\infty}$ is the isomorphic copy of $\Psi$ in $M_\infty$,
with the same action on the tree.

By equation~\eqref{eq:Delta}, we have $\Delta_1=4(1+t)$ and $\Delta_2=-64(1+t)^2 t$,
so that $L=K(\sqrt{\Delta_1},\sqrt{\Delta_2},\zeta_8)$. Therefore, for any $\sigma\in G_{\infty}$,
we have $\sigma\in\Psi$ if and only if $\sigma$ fixes
$\sqrt{\Delta_1}$, $\sqrt{\Delta_2}$, and $\zeta_8$. That is, 
$\sigma\in\Psi$ if and only if $\sigma$ acts as an even permutation on both
the first and second levels of the tree, and $P(\sigma)\equiv 1\pmod{8}$,
yielding precisely the three desired bullet points.
(The third is by Theorem~\ref{thm:Pembed}.)
Therefore, the Frattini subgroup $\Phi$ of $M_{\infty}$ is the subset
carved out by the same conditions.
%
%\textbf{Step 3}.
%Finally, for an arbitrary field $k$ satisfying the hypotheses of Theorem~\ref{thm:PinkEquiv},
%and with $K:=k(t)$, Theorem~\ref{thm:PinkEquiv} and Step~2 together say
%that the Frattini subgroup of the Galois group $G_{\infty}$
%is specified by the conditions of the three bullet points.
%Any $\sigma\in G_{\infty}$ satisfies the first bullet point if and only if $\sigma$
%fixes $\sqrt{\Delta_1}$; it satisfies the second if and only if $\sigma$
%fixes $\sqrt{\Delta_2}$; and by Theorem~\ref{thm:Pembed},
%it satisfies the third if and only if if fixes $\zeta_8$.
%The final statement of the theorem follows immediately.
\end{proof}

\begin{remark}
Equation~\eqref{eq:Delta} shows that for $n\geq 2$, the discriminant $\Delta_n$
is always a square in $K$ times either $-t$ or $-t-1$, depending on whether $n$
is even or odd. (For $n=1$, $\Delta_1=4(1+t)$
is off by a sign from this pattern.)
Thus, for any $\sigma\in G_{\infty}$,
the parity of $\sigma$ as a permutation at each of the even levels of the tree is the same.
Similarly, $\sigma$ has the same parity at all odd levels of the tree from the third upwards;
the parity on the first level is the same as the other odd levels if and only if
$P(\sigma)\equiv 1 \pmod{4}$.

For $K=\QQ(t)$, Theorem~\ref{thm:PinkEquiv} shows that $G_{\infty}\cong M_{\infty}$,
and hence these parity observations carry over to the action of $M_{\infty}$.
In particular, for $\sigma\in M_{\infty}$, the three bullet points of Theorem~\ref{thm:fratM}
are equivalent to the following two conditions:
\begin{itemize}
\item $\sigma$ acts as an even permutation at every level of the tree, and
\item $P(\sigma)\equiv 1 \pmod{8}$.
\end{itemize}
\end{remark}

We are now prepared to prove statement~(2) of our Main Theorem,
which we state here in a more expanded form.

\begin{thm}
\label{thm:condition}
Fix notation as at the start of Section~\ref{sec:zeta}.
Fix roots of unity $\zeta_{2^m}$ and a tree labeling as in Lemma~\ref{lem:pickzeta}.
%Let $K$ be a field of characteristic different from $2$,
%let $f(z)=z^2-1\in K[z]$, and let $x_0\in K$.
%For each $n\geq 0$, define $K_n:=K(f^{-n}(x_0))$ and $G_n:=\Gal(K_n/K)$.
The following are equivalent.
\begin{enumerate}
\item $[K(\sqrt{-x_0}, \sqrt{1+x_0},\zeta_8) : K]=16$.
\item $[K_5:K]=2^{25}$.
\item $G_5\cong M_5$.
\item $G_n\cong M_n$ for all $n\geq 1$.
\item $G_{\infty}\cong M_{\infty}$.
\end{enumerate}
As always, the isomorphisms of statements~(3)--(5) are of groups acting on trees,
not just of abstract groups.
\end{thm}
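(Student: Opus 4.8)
The plan is to prove the five conditions equivalent by establishing the cycle $(3)\Rightarrow(5)\Rightarrow(4)\Rightarrow(3)$ together with $(2)\Leftrightarrow(3)$ and $(1)\Leftrightarrow(5)$, with the Frattini-subgroup machinery of Proposition~\ref{prop:fratbasic} and Theorem~\ref{thm:fratM} doing the real work. First I record the standing facts. By Corollary~\ref{cor:Pbasil}, the action on $\Orb_f^-(x_0)$ gives an equivariant embedding $G_\infty\hookrightarrow M_\infty$, and restricting to $T_n$ gives equivariant embeddings $G_n\hookrightarrow M_n$; thus $[K_n:K]=|G_n|\le|M_n|$. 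By Theorem~\ref{thm:MBmap} we have $|M_5|=2^{m_5}=2^{25}$, and the exact sequences there also yield $R_{\infty,n}(M_\infty)=M_n$ for every $n$. Then $(2)\Leftrightarrow(3)$ is immediate: since $G_5\hookrightarrow M_5$ and $|M_5|=2^{25}$, we have $[K_5:K]=2^{25}$ exactly when $G_5=M_5$. And $(5)\Rightarrow(4)\Rightarrow(3)$ is routine: an equivariant isomorphism $G_\infty\cong M_\infty$ restricts under $R_{\infty,n}$ to an equivariant isomorphism $G_n\cong M_n$ for each $n$, and $(4)\Rightarrow(3)$ is the case $n=5$.

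For $(3)\Rightarrow(5)$, let $\Phi$ denote the Frattini subgroup of $M_\infty$, which has index $16$ by Theorem~\ref{thm:fratM}. The key observation is that the three conditions carving out $\Phi$ --- fixing level~$1$, acting as an even permutation on level~$2$, and $P(\sigma)\equiv 1\pmod 8$ --- depend only on $R_{\infty,5}(\sigma)\in\Aut(T_5)$: the first two plainly do, and the third does because $P(\sigma)\bmod 8$ coincides with the value $P(R_{\infty,5}(\sigma),x_0)\in(\ZZ/8\ZZ)^{\times}$ of Definition~\ref{def:Pmap} (here $j=\lfloor 6/2\rfloor=3$). Hence $\ker(R_{\infty,5}|_{M_\infty})=M_\infty\cap U_5\subseteq\Phi$, so the quotient map $M_\infty\to M_\infty/\Phi$ induces a surjection $\bar q\colon M_5\to M_\infty/\Phi$. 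If now $G_5=M_5$, then the composite $G_\infty\hookrightarrow M_\infty\to M_\infty/\Phi$ equals the composite of the restriction $G_\infty\twoheadrightarrow G_5=M_5$ with $\bar q$, and is therefore surjective; that is, $G_\infty\Phi=M_\infty$. Since $M_\infty$ is profinite and the image of $G_\infty$ in it is closed, Proposition~\ref{prop:fratbasic} forces $G_\infty=M_\infty$, so the embedding $G_\infty\hookrightarrow M_\infty$ is an equivariant isomorphism.

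For $(1)\Leftrightarrow(5)$, Proposition~\ref{prop:fratbasic} gives $G_\infty=M_\infty$ if and only if $G_\infty\Phi=M_\infty$, i.e.\ if and only if the image of $G_\infty$ in $M_\infty/\Phi$, which is $G_\infty/(G_\infty\cap\Phi)$, has order $16$. It thus remains to identify $G_\infty\cap\Phi$ as a Galois group. Using the description of $\Phi$ in Theorem~\ref{thm:fratM}: a $\sigma\in G_\infty$ fixes level~$1$ if and only if it fixes $\sqrt{1+x_0}$, since the two level-$1$ nodes are $\pm\sqrt{1+x_0}$; it acts evenly on level~$2$ if and only if it fixes the Vandermonde $\sqrt{\Delta_2}$ of the four level-$2$ roots, and $K(\sqrt{\Delta_2})=K(\sqrt{-x_0})$ because $\Delta_2$ equals $-x_0$ times a square in $K$ by the discriminant formula~\eqref{eq:Delta}; and $P(\sigma)\equiv 1\pmod 8$ if and only if $\sigma(\zeta_8)=\zeta_8$ by Theorem~\ref{thm:Pembed}. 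Since $\sqrt{1+x_0}\in K_1$, $\sqrt{-x_0}\in K_2$ (using $1+x_0\ne 0$), and $\zeta_8\in K_5$ by Lemma~\ref{lem:nrel} with $m=2$, the field $L:=K(\sqrt{-x_0},\sqrt{1+x_0},\zeta_8)$ is contained in $K_\infty$ and $G_\infty\cap\Phi=\Gal(K_\infty/L)$; therefore $|G_\infty/(G_\infty\cap\Phi)|=[L:K]$, which equals $16$ exactly when condition~(1) holds.

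I expect the real obstacle to be concentrated in two places, both bookkeeping rather than invention given the results already in hand. In $(3)\Rightarrow(5)$ one must verify that every generator of the Frattini quotient $M_\infty/\Phi\cong(\ZZ/2\ZZ)^4$ is already visible on the subtree $T_5$ --- this is precisely why the level-$5$ truncation in the Main Theorem is sharp, and it hinges on $P\bmod 8$ depending only on the parities $\Par(\sigma,\cdot)$ at nodes of length at most $4$. In $(1)\Leftrightarrow(5)$ one must check carefully that the three bullet conditions of Theorem~\ref{thm:fratM} translate exactly into fixing the three listed field generators, and that $L\subseteq K_\infty$; the deeper input --- the computation of $\Phi$, which runs through Pink's function-field description --- has already been done in Theorem~\ref{thm:fratM}.
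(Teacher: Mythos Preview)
Your proof is correct and rests on the same central tool as the paper's --- the Frattini subgroup $\Phi$ of $M_\infty$ computed in Theorem~\ref{thm:fratM} together with Proposition~\ref{prop:fratbasic} --- but you organize the implications differently, and in one place more efficiently.

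The paper proves the cycle $(1)\Rightarrow(5)\Rightarrow(4)\Rightarrow(3)\Rightarrow(1)$ (plus $(2)\Leftrightarrow(3)$). Its $(1)\Rightarrow(5)$ is essentially your forward half of $(1)\Leftrightarrow(5)$: translate $[L:K]=16$ into surjectivity of $G_\infty\to M_\infty/\Phi$ and apply Proposition~\ref{prop:fratbasic}. For $(3)\Rightarrow(1)$, however, the paper takes a concrete route: it pushes the explicit elements $\mapa,\mapb,\mape,\mapt\in M_\infty$ down to $G_5\cong M_5$, reads off their parities at levels~1 and~2 and their $P$-values mod~$8$, and verifies directly that the restrictions to $L$ already generate $\Gal(L/K)$ of order~$16$. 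You bypass this computation entirely by observing that the three defining conditions for $\Phi$ depend only on $R_{\infty,5}(\sigma)$ (in particular $P(\sigma)\bmod 8$ is determined at level~$5$ since $\lfloor 6/2\rfloor=3$), so $\ker(R_{\infty,5}|_{M_\infty})\subseteq\Phi$; hence $G_5=M_5$ forces $G_\infty\Phi=M_\infty$ and then $G_\infty=M_\infty$. This is a genuine streamlining: it replaces an element-by-element check with a single structural remark, and it makes transparent \emph{why} level~$5$ is the right cutoff. What the paper's approach buys in exchange is an explicit witness, via $\alpha,\beta,\alpha\eps,\theta$, that all $16$ cosets of $\Phi$ are hit --- useful if one wants to see concretely how the four quadratic obstructions are independently realized.
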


\begin{proof}
\textbf{Step 1}.
The implications (5)$\Rightarrow$(4)$\Rightarrow$(3) are trivial.
%Taking inverse limits, we have (4)$\Rightarrow$(5), since
%\[ G_{\infty} \cong \varprojlim G_n \cong \varprojlim M_n \cong M_\infty .\]
By Corollary~\ref{cor:Pbasil}, $G_5$ is isomorphic to a subgroup of $M_5$,
and $|M_5|=2^{25}$ by Theorem~\ref{thm:MBmap}.
Therefore, since $|G_5|=[K_5:K]$, we have
(2)$\Leftrightarrow$(3).

\medskip

\textbf{Step 2}.
To prove (1)$\Rightarrow$(5), define
\[ L:=K(\sqrt{-x_0}, \sqrt{1+x_0},\zeta_8)
= K(\sqrt{\Delta_1},\sqrt{\Delta_2},\zeta_8), \]
where $\Delta_1:= 4(1+x_0)$ and $\Delta_2:=-64(1+x_0)^2 x_0$,
which are the discriminants of the polynomials $f(z)-x_0$ and $f^2(z)-x_0$,
by equation~\eqref{eq:Delta}. Thus, by Lemma~\ref{lem:nrel},
we have $L\subseteq K_{\infty}$.

Since $|\Gal(L/K)|=16$, each of the 16 combinations of
\[ \Delta_1 \mapsto \pm \Delta_1, \quad \Delta_2 \mapsto \pm \Delta_2, \quad
\zeta_8\mapsto \pm \zeta_8, \pm \zeta_8^3 \]
is realized by some $\tau\in\Gal(L/K)$. Lifting each such $\tau$ to $\sigma\in G_{\infty}$,
it follows that each of the 16 combinations of
\begin{itemize}
\item $\sigma$ is even or odd at level $1$ of the tree,
\item $\sigma$ is even or odd at level $2$ of the tree,
\item $P(\sigma)$ is $1,3,5,7\pmod{8}$
\end{itemize}
is realized by some $\sigma\in G_{\infty}$.

By Corollary~\ref{cor:Pbasil}, $G_{\infty}$ is isomorphic to a subgroup $H$ of $M_\infty$.
By the previous paragraph, $H$ intersects all the cosets
of the Frattini subgroup $\Phi$ of $M_{\infty}$.
Therefore, by Theorem~\ref{thm:fratM}, the subgroup $H$ is all of $M_{\infty}$,
and hence $G_{\infty}\cong M_{\infty}$.

\medskip

\textbf{Step 3}. It remains to show (3)$\Rightarrow$(1).
Use the same notation $L$, $\Delta_1$, and $\Delta_2$ from Step~2.
Since $\zeta_8\in K_5$ by Lemma~\ref{lem:nrel},
and since $\sqrt{\Delta_1}\in K_1$ and $\sqrt{\Delta_2}\in K_2$,
we have $L\subseteq K_5$.

Let $\alpha, \beta, \eps, \theta\in G_5$ denote the images
of $\mapa, \mapb,\mape,\mapt\in M_{\infty}$ from Section~\ref{sec:Basil}
under the homomorphism $R_{\infty,5}:M_{\infty}\to M_5$
composed with the isomorphism $M_5\cong G_5$.

Then $\alpha$, like $\mapa$, switches the two nodes at level $1$ of the tree
but is even on level~2, and moreover satisfies $P(\alpha)\equiv 1 \pmod{8}$.
Similarly, $\beta$ fixes the two nodes at level $1$ of the tree
but is odd on level~2, and again satisfies $P(\alpha)\equiv 1 \pmod{8}$.
On the other hand, $\eps$ switches the two nodes at level~1 but is even at level~2,
this time with $P(\eps)\equiv -1 \pmod{8}$.
Therefore, $\alpha \eps$ is even at both levels~1 and~2 and satisfies
$P(\alpha \eps)\equiv -1 \pmod{8}$.
Finally, $\theta$ is even at both levels~1 and~2 and satisfies
$P(\theta)\equiv 3 \pmod{8}$.

Hence, $\alpha$ fixes $\sqrt{\Delta_2}$ and $\zeta_8$
but maps $\sqrt{\Delta_1}$ to its negative.
Similarly, $\beta$ fixes $\sqrt{\Delta_1}$ and $\zeta_8$
but maps $\sqrt{\Delta_2}$ to its negative.
Both $\alpha\eps$ and $\theta$ fix both $\sqrt{\Delta_1}$ and $\sqrt{\Delta_2}$
but map $\zeta_8$ to $\zeta_8^{-1}$ and $\zeta_8^3$, respectively.
Restricting each to $L$, then, we see that $\Gal(L/K)$ has at least $16$ elements,
and hence exactly $16$. That is, $[L:K]=16$.
\end{proof}

\textbf{Acknowledgements}.
The results of this paper grew out of an REU project at Amherst College.
Several of our results were originally suggested by computations using Magma.
Authors RB, JC, and GC gratefully acknowledge the support of NSF grant DMS-1501766.
Authors FA and LF gratefully acknowledge the support of Amherst College's
Gregory S.~Call student research funding.
We thank Harris Daniels and Jamie Juul for helpful discussions,
and Joseph Lupo for spotting some minor errors in the original manuscript.
A particular thanks goes to Rafe Jones, for proposing this problem,
for providing useful background,
and for suggestions surrounding Conjectures~\ref{conj:basic} and~\ref{conj:general}.
Finally, we thank the referee for their careful reading of the paper and their
apt suggestions for improvements, including a significant simplification of the
proof of Theorem~\ref{thm:condition}, by leveraging the results of \cite[Section~2.8]{Pink}
via the Frattini subgroup strategy of Theorem~\ref{thm:fratM}.

\bibliographystyle{amsalpha}

\end{document}